\documentclass[11pt]{article}
\usepackage{amsmath}
\usepackage{amsthm}
\usepackage{enumitem}
\usepackage{latexsym}
\usepackage{graphicx, epsfig}
\usepackage{amssymb,amsmath}
\usepackage{multimedia}
\usepackage{subfigure}
\usepackage{cases}
\usepackage{mathrsfs}
\usepackage{blkarray}
\usepackage{pgfpages,tikz,tikz-cd,pgfkeys,pgfplots}
\usetikzlibrary{arrows,positioning,matrix,fit,backgrounds,shapes}

\usetikzlibrary{fit,matrix}
\tikzset{
mN/.style = {
    draw=#1, semithick, inner sep=0pt}
             }

\oddsidemargin  0.5cm \evensidemargin 0cm \topmargin 0.5cm
\headheight 0pt \textheight 20cm \textwidth 15cm

\newtheorem{theorem}{Theorem}[section]

\newtheorem{lemma}[theorem]{Lemma}
\newtheorem{proposition}[theorem]{Proposition}
\newtheorem{corollary}[theorem]{Corollary}

\theoremstyle{definition}

\title{Row graphs of Toeplitz matrices\thanks{This work was partially supported by Science Research Center Program through the National Research Foundation of Korea(NRF) Grant funded by the Korean Government (MSIP)(NRF-2016R1A5A1008055). G.-S. Cheon was partially supported by the NRF-2019R1A2C1007518. Bumtle Kang was partially supported by the NRF-2021R1C1C2014187. S.-R. Kim was partially supported by the Korea government (MSIP) (NRF-2017R1E1A1A03070489).}}
\date{}

\author{ Gi-Sang Cheon$^{a, b}$, Bumtle Kang$^{b}$, Suh-Ryung Kim$^{b, c}$, and Homoon Ryu$^{b,c}$  \\
{\footnotesize $^a$ \textit{Department of Mathematics, Sungkyunkwan
University, Suwon 16419, Rep. of Korea}}\\
{\footnotesize $^{b}$ \textit{Applied Algebra and Optimization
Research Center, Sungkyunkwan University,}}\\{\footnotesize\textit{
Suwon 16419, Rep. of Korea}}\\
 {\footnotesize $^{c}$ \textit{Department of Mathematics Education,
Seoul National University,}}\\{\footnotesize\textit{
Seoul 08826, Rep. of Korea}}\\
{\footnotesize gscheon@skku.edu, lokbt1@skku.edu, srkim@snu.ac.kr, and ryuhomun@naver.com}}

\begin{document}

\maketitle
\begin{abstract}
In this paper, we study row graphs of Toeplitz matrices.
The notion of row graphs was introduced by Greenberg~{\em et al.} in 1984 and is closely related to the notion of competition graphs, which has  been extensively studied since Cohen had introduced it in 1968.

To understand the structure of the row graphs of Toeplitz matrices, which seem to be quite complicated, we have begun with Toeplitz matrices whose row graphs are triangle-free.
We could show that if the row graph $G$ of a Toeplitz matrix $T$ is triangle-free, then $T$ has the maximum row sum at most $2$.
Furthermore, it turns out that $G$ is a disjoint union of paths and cycles whose lengths cannot vary that much in such a case.
Then we study $(0,1)$-Toeplitz matrices whose row graphs have only path components, only cycle components, and a cycle component of specific length, respectively.
In particular, we completely characterize a $(0,1)$-Toeplitz matrix whose row graph is a cycle.
\end{abstract}
\noindent{\it Keywords}: Toeplitz matrix, row graph, competition graph, triangle-free graph, cycle components

\section{Introduction}
\label{sec:intro}
The {\em support} of a matrix $A$ is the $(0,1)$-matrix with the $(i,j)$-entry equal to $1$ if the $(i,j)$-entry of $A$ is nonzero, and equal to $0$, otherwise. The {\em digraph} of a square matrix is the digraph whose adjacency matrix is the support of the matrix
If a square matrix $A$ is symmetric, then the arc set of its digraph is symmetric and we consider the {\em graph} of $A$, instead of the digraph of $A$, which is obtained from the digraph of $A$ by replacing each directed cycle of length $2$ with an edge.

A {\em Toeplitz matrix} of order $n$ is an $n\times n$ matrix, $T=(t_{ij})$ where $t_{ij}=a_{i-j}$ for all $i,j=1, \ldots, n$, {\it i.e.} a matrix of the form
\begin{eqnarray}\label{Toeplitz}
T= \left(\begin{array}{cccc}
a_0 & b_1 & \cdots & b_{n-1} \\
a_1 & a_0  & \ddots &\vdots \\
\vdots & \ddots & \ddots  & b_1  \\
a_{n-1}&\cdots &a_1 & a_0
\end{array}\right),\quad b_i=a_{-i}.
\end{eqnarray}

Toeplitz matrices have been investigated by many authors with wide application in various fields of pure and applied mathematics.
For example, the matrices often appear in discrete differential or integral equations, physical data-processing, orthogonal polynomials, stationary processes, and moment problems, see  \cite{Goh,HR,Ioh} and references there in. Recently, the primitivity and the exponents of $(0,1)$-Toeplitz matrices have been studied in \cite{jkkc}.

In this paper, we study the ``row graphs" of $(0,1)$-Toeplitz matrices and obtain some interesting results.
Given a $(0,1)$ boolean matrix $A$, a graph $G$ is called the {\em row graph} of $A$  and denoted by ${\rm RG}(A)$ if the rows of $A$ are the vertices of $G$, and two vertices are adjacent in $G$ if and only if their corresponding rows have a nonzero entry in the same column of $A$.
This notion was studied by Greenberg~\cite{greenberg1984inverting}.
As a matter of fact, ${\rm RG}(A)$ is the ``competition graph" of the digraph of $A$, which was introduced by Cohen \cite{cohen1968interval} in 1968 associated with the study of ecosystems.
Given a digraph $D$, the {\it competition graph} of $D$ has $V(D)$ as its vertex set  and an edge $uv$ if and only if there are arcs $u \to w$ and $v \to w$ for some vertex $w$ in $D$ (see Figure~\ref{fig:Toeplitz1} for an illustration).
The notion of competition graphs and its variants have been extensively studied since Cohen introduced it.

 Most digraphs dealt in studying competition graphs have none of loops, multiple edges. Equivalently, the adjacency matrices of those digraphs are $(0,1)$-matrices with the zero main diagonal and not both $(i,j)$-entry and $(j,i)$-entry $1$. In this vein, we technically consider the family ${\mathcal T}$ of $(0,1)$-Toeplitz matrices $T=(t_{i,j})_n$ satisfying the following conditions:
 \begin{itemize}
 \item[(i)] $t_{i,i}=0$ for all $i \in [n]$;
 \item[(ii)] If $t_{i,j}=1$, then $t_{j,i}=0$.
 \end{itemize}
 If we describe the row graph $G$ of a $(0,1)$-Toeplitz matrix $(t_{i,j})_n$ more precisely,
 \[V(G)=[n]\]
 and $i$ and $j$ are adjacent in $G$ if and only if $t_{i,k}=t_{j,k}=1$ for some $k \in [n]$.

 If a graph $G$ is the row graph of $T$ for some $T \in \mathcal{T}$, then we call $G$ a {\em Toeplitz-row graph}. Unless otherwise mentioned, we mean by a $(0,1)$-Toeplitz matrix a matrix in ${\mathcal T}$.

 Given a $(0,1)$ boolean matrix $A$, we denote by ${\mathcal C}(A)$ the adjacency matrix of ${\rm RG}(A)$, that is, the $(i,j)$-entry of $\mathcal{C}(A)$ equals
\[\begin{cases}
	1 & \mbox{if $i \neq j$ and the inner product of the $i$th row and the $j$th row of $A$ is $1$;} \\ 0 & \mbox{otherwise.}
\end{cases}\]
 We may regard ${\mathcal C}$ as an operator on the set of all $n \times n$ boolean matrices.

 \begin{figure}
\begin{tikzpicture}[>=stealth,thick,baseline]
\node (matrix) [matrix of math nodes,left delimiter=(,right delimiter={)}]
{0 & 0 & 1 & 0 & 0 & 1 \\
1 & 0 & 0 & 1 & 0 & 0 \\
0 & 1 & 0 & 0 & 1 & 0 \\
1 & 0 & 1 & 0 & 0 & 1 \\
0 & 1 & 0 & 1 & 0 & 0 \\
0 & 0 & 1 & 0 & 1 & 0 \\};
\node [left=3mm of matrix]{$T=$};
\end{tikzpicture}
\qquad
\begin{tikzpicture}[baseline={(0,2.2)}]
\filldraw (0,4.5) circle (2pt);
\draw (0,4.8) node{\small $1$};
\filldraw (-1.3,3.85) circle (2pt);
\draw (-1.6,3.85) node{\small $6$};
\filldraw (1.3,3.85) circle (2pt);
\draw (1.6,3.85) node{\small $2$};
\filldraw (-1.3,2.55) circle (2pt);
\draw (-1.6,2.55) node{\small $5$};
\filldraw (1.3,2.55) circle (2pt);
\draw (1.6,2.55) node{\small $3$};
\filldraw (0,1.5) circle (2pt);
\draw (0,1.2) node{\small $4$};

\draw (0,1.5) -- (1.3,3.85);

\draw (1.3,2.55) -- (-1.3,2.55);

\draw (0,1.5) -- (-1.3,3.85);
\draw (0,4.5) -- (-1.3,3.85);
\draw (0,1.5) -- (0,4.5);

\draw (1.3,3.85) -- (-1.3,2.55);

\draw (1.3,2.55) -- (-1.3,3.85);

\draw (0,1.5) -- (0,4.5);

\draw (0,0.75) node{${\rm RG}(T)=:G$};
\end{tikzpicture}
\qquad
\begin{tikzpicture}[>=stealth,thick,baseline]
\node (matrix) [matrix of math nodes,left delimiter=(,right delimiter={)}]
{0 & 0 & 0 & 1 & 0 & 1 \\
0 & 0 & 0 & 1 & 1 & 0 \\
0 & 0 & 0 & 0 & 1 & 1 \\
1 & 1 & 0 & 0 & 0 & 1 \\
0 & 1 & 1 & 0 & 0 & 0 \\
1 & 0 & 1 & 1 & 0 & 0\\};
\node [left=3mm of matrix]{$A(G)=$};
\end{tikzpicture}

\caption{A Toeplitz matrix $T=T_6 \langle 1,3;2,5\rangle$, the row graph $G$ of $T$, and the adjacency matrix $A(G)$ of $G$}
\label{fig:Toeplitz1}
\end{figure}
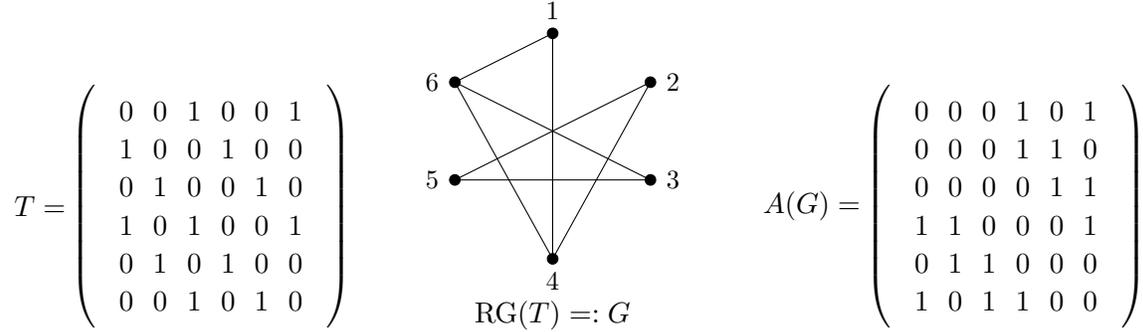
In Section~\ref{sec:TCgraphs}, we study triangle-free Toeplitz-row graphs.
 We first derive equalities involving integers belonging to $\alpha$ or $\beta$ as the condition for two vertices being adjacent in the row graph of a Toeplitz matrix $T_n\langle \alpha;\beta\rangle$ (Theorem~\ref{thm:edgecond}).
We show that if the row graph of a Toeplitz matrix $T$ is triangle-free, then $T$ belongs to ${\mathcal T}_{\le 2}$ where ${\mathcal T}_{\le 2}$ denotes the set of Toeplitz matrices with the maximum row sum at most $2$.
By the way, if $T \in {\mathcal T}_{\le 2}$, then ${\rm RG}(T)$ is a disjoint union of cycles and paths and therefore a triangle-free Toeplitz row graph is a disjoint union of paths and cycles.
Based on these observations we came up with the following research question:
\begin{center} Is every disjoint union of cycles and paths a triangle-free Toeplitz-row graph?
\end{center}	
The answer turns out to be no.
As a matter of fact, the lengths of paths or cycles in a triangle-free Toeplitz row graph cannot be much different (Theorem~\ref{thm:forbidden}).
 We give a necessary and sufficient condition for a Toeplitz matrix belonging to ${\mathcal T}_{\le2}$ (Theorem~\ref{thm:ind2}), which plays an important role throughout Sections~\ref{sec:TCgraphs}.
 We characterize a Toeplitz matrix $T_n(\alpha,\beta) \in {\mathcal T}_{\le 2}$ with $|\alpha|=|\beta|=2$ whose row graph is triangle-free (Theorem~\ref{thm:chartriangle-free}) and  the connected triangle-free Toeplitz-row graphs (Theorem~\ref{thm:connected triangle-free}).

 Section~\ref{sec:rowsum2} focuses on $(0,1)$-Toeplitz matrices whose row graphs consist of only path components, only cycle components, and a cycle component with specific length, respectively.
 We first give a sufficient condition for a $(0,1)$-Toeplitz matrix to have the row graph consisting of only path components (Proposition~\ref{prop:acylcic}).
 Then we show that, given integers $4\le m\le n$ except a few cases,  there is a Toeplitz matrix of order $n$ whose row graph contains a cycle component of length $m$ (Theorem~\ref{thm:cyclemn}).
 We completely characterize the Toeplitz matrices $T_n \langle \alpha;\beta \rangle$ with $|\alpha|=2$ and $|\beta|=1$ whose row graph contains a cycle (Theorem~\ref{thm:s1t2cycle}).
 We give a sufficient condition for the row graph of a Toeplitz matrix consisting of only cycle components and figure out the length of cycles
 (Proposition~\ref{prop:cdistoep} and Theorem~\ref{thm:dcycle}).
 Finally, we completely characterize a Toeplitz matrix whose row graph is a cycle (Theorem~\ref{thm:cycle}).

\section{Triangle-free Toeplitz-row graphs}
\label{sec:TCgraphs}
Since Toeplitz matrices are completely determined by their first column and first row, we begin by assuming that there are $k_1$ ones in the first column and $k_2$ ones in the first row of $T$ ($k_1$ or $k_2$ may be $0$ but not both), and $t_{i+1,1}=1$ for $i=i_1,\ldots,i_{k_1}$ if $k_1 \ge 1$ and $t_{1,j+1}=1$ for $j=j_1,\ldots,j_{k_2}$ if $k_2 \ge 1$.
Denote by $\alpha=\{i_1,\ldots,i_{k_1}\}$ if $k_1 \ge 1$ and $\beta=\{j_1,\ldots,j_{k_2}\}$ if $k_2 \ge 1$  the sets of such row and column indices in increasing order.
 If $k_1=0$ (resp.\ $k_2=0$), then $\alpha=\emptyset$ (resp.\ $\beta=\emptyset$).
 By definition, clearly $\alpha$ and $\beta$ are disjoint subsets of $[n-1]$. Recall that the lower shift matrix $L$ of order $n$ is the $(0,1)$-matrix with ones only on the subdiagonal, and zeroes elsewhere. For example, the $4\times 4$ lower shift matrix is
\begin{eqnarray*}
L= \left(\begin{array}{cccc}
0 & 0 & 0 &0 \\
1 &0  &0 &0 \\
0&1&0 & 0  \\
0&0 &1& 0
\end{array}\right).
\end{eqnarray*}
Thus it follows from (\ref{Toeplitz}) that $T$ can be expressed in terms of $L$ and $U=L^T$ as follows:
\begin{equation}
 T = \sum_{m=1}^{k_1} L^{i_m} + \sum_{m=1}^{k_2} U^{j_m}
 \label{eq:toedecom}
 \end{equation}
 (if $k_1=0$ or $k_2=0$, then the corresponding summands are regarded as zero matrices).
 For convenience, we denote $T$ by $T_n \langle \alpha;\beta\rangle$ or $T_n \langle i_1,\ldots,i_{k_1};j_1,\ldots,j_{k_2}\rangle$ when we need to specify the elements of $\alpha,\beta$.
 Especially,  if $\alpha=\beta$, then $T_n \langle \alpha;\beta\rangle$ is a $(0,1)$-symmetric Toeplitz matrix with zero diagonal and we denote it by $ST_n \langle \alpha \rangle$.

We note that $T_n \langle \alpha;\beta\rangle=PT_n \langle \beta;\alpha\rangle P^{T}$ for the permutation matrix $P=[p_{i,j}]$ where $p_{i,j}=1$ if and only if $i+j=n+1$.
Thus the row graph of $T_n \langle \alpha;\beta\rangle$ and that of $T_n \langle \beta;\alpha\rangle$ are isomorphic.
Accordingly, we may assume $i_1<j_1$ for a $(0,1)$-Toeplitz matrix $T_n\langle i_1,\ldots, i_{k_1};j_1,\ldots,j_{k_2}\rangle$ without loss of generality.

\begin{theorem}\label{thm:edgecond}
Let $T=T_n \langle \alpha;\beta\rangle$ and take integers $i$ and $j$ in $[n]$ with $i<j$.
Then $i$ and $j$ are adjacent in ${\rm RG}(T)$ if and only if one of the following is true:
\begin{numcases}
{j-i=}
i_t-i_s &  for some  $i_s \in \alpha \cap [i-1]$ and  $i_t \in \alpha \cap [j-1]$ with $s < t$;  \label{eq:case1}\\
j_q-j_p & for some $j_q \in \beta \cap [n-i]$ and  $j_p \in \beta \cap  [n-j]$ with $p < q$;\label{eq:case3} \\
i_t+j_q & for some $i_t \in \alpha$ and $j_q \in \beta$.  \label{eq:case2}
\end{numcases}
\end{theorem}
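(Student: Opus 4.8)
The plan is to reduce the adjacency condition in ${\rm RG}(T)$ to an arithmetic condition on $\alpha$ and $\beta$, and the first step is to pin down exactly which entries of $T$ equal $1$. Reading off the decomposition (\ref{eq:toedecom}), we have $(L^{i})_{p,q}=1$ precisely when $p-q=i$, and $(U^{j})_{p,q}=(L^{j})^{T}_{p,q}=1$ precisely when $q-p=j$. Hence the $(p,q)$-entry of $T$ equals $1$ if and only if $p-q\in\alpha$ or $q-p\in\beta$. For fixed $p,q$ at most one of $p-q,\,q-p$ is positive, so these two alternatives are mutually exclusive; this is what prevents any double counting below.

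By definition $i$ and $j$ are adjacent in ${\rm RG}(T)$ exactly when there is a column index $k\in[n]$ with $t_{i,k}=t_{j,k}=1$. I would fix such a $k$ and classify it according to which of the two alternatives above witnesses $t_{i,k}=1$ and $t_{j,k}=1$, giving four cases: (A) $i-k\in\alpha$ and $j-k\in\alpha$; (B) $i-k\in\alpha$ and $k-j\in\beta$; (C) $k-i\in\beta$ and $j-k\in\alpha$; (D) $k-i\in\beta$ and $k-j\in\beta$. The one piece of cleverness is that case (B) is \emph{vacuous}: $i-k\in\alpha$ forces $k<i$ while $k-j\in\beta$ forces $k>j>i$, which is impossible.

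For the three remaining cases I would subtract the two membership conditions to recover $j-i$. In (A), writing $i-k=i_s$ and $j-k=i_t$ gives $j-i=i_t-i_s$; since $j>i$ we get $i_s<i_t$, hence $s<t$, and $k\ge 1$ translates into $i_s\le i-1$ and $i_t\le j-1$, i.e.\ $i_s\in\alpha\cap[i-1]$ and $i_t\in\alpha\cap[j-1]$, which is exactly (\ref{eq:case1}). In (D), writing $k-i=j_q$ and $k-j=j_p$ gives $j-i=j_q-j_p$ with $p<q$, and $k\le n$ becomes $j_q\le n-i$ and $j_p\le n-j$, which is (\ref{eq:case3}). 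In (C), writing $k-i=j_q$ and $j-k=i_t$ gives $j-i=i_t+j_q$; here $i<k<j$ is automatic and the constraints $1\le k\le n$ follow from the equation itself (for instance $i_t=j-i-j_q\le j-1$), so no range restriction survives, matching (\ref{eq:case2}).

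Finally I would check that each implication is reversible: given a solution of (\ref{eq:case1}), (\ref{eq:case3}), or (\ref{eq:case2}), the value $k=i-i_s$, $k=i+j_q$, or $k=i+j_q$ respectively lies in $[n]$ and witnesses $t_{i,k}=t_{j,k}=1$, so $i$ and $j$ are adjacent. The main (though modest) obstacle is bookkeeping: keeping the increasing-order conventions ($s<t$, $p<q$) consistent with the sign of $j-i$, and verifying that the boundary constraints $1\le k\le n$ reproduce exactly the index ranges $[i-1],[j-1],[n-i],[n-j]$ in the statement rather than something weaker or stronger.
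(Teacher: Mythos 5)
Your proposal is correct and follows essentially the same route as the paper's proof: both classify the witnessing column $k$ according to which shift terms of the decomposition \eqref{eq:toedecom} produce the two nonzero entries (equivalently, by the position of $k$ relative to $i$ and $j$), then translate $1\le k\le n$ into the stated index ranges, and verify the converse by exhibiting $k$ explicitly. The only cosmetic difference is that you organize the forward direction into four cases with one shown vacuous, whereas the paper uses the three positional cases $k<i$, $i<k<j$, $j<k$ directly.
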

\begin{proof} Let $T=[t_{i,j}]$ and $G={\rm RG}(T)$.
Suppose that $i$ and $j$ are adjacent in $G$.
Then, by definition, there exists  $k\in[n]$ such that $t_{i,k}=1$ and $t_{j,k}=1$.
We consider the following three cases: (i) $1\le k<i$; (ii) $i<k<j$; (iii) $j<k\le n$.
We let
\[i-k=i_s, \quad j-k=i_t, \quad k-i=j_q, \quad \mbox{and} \quad k-j=j_p. \]
In Case (i), $i_s\in \alpha \cap [i-1]$, $ i_t \in \alpha \cap [j-1]$, and $j-i=i_t-i_s$;
in Case (ii), $j_q \in \beta$, $i_t \in \alpha$, and $j-i=i_t+j_p$;
in Case (iii), $j_q \in  \beta \cap [n-i]$, $j_p \in \beta \cap [n-j]$, and $j-i=j_p-j_q$.

We show the converse.

{\it Case 1.} $j-i=i_t-i_s$ for some  $i_s \in \alpha \cap [i-1]$ and  $i_t \in \alpha \cap [j-1]$.
Then  $t_{i_s+1,1}=t_{i_t+1,1}=1$.
Since $i_s \le i-1$, we have $i-i_s \ge 1$ and $t_{i,i-i_s}=1$.
Similarly, $t_{j,j-i_t}=1$.
By the case assumption, $i-i_s=j-i_t$, so $i$ and $j$ are adjacent in $G$.

{\it Case 2.} $j-i=j_q-j_p$ for some $j_q \in \beta \cap [n-i]$ and  $j_p \in \beta \cap [n-j]$.
Then $t_{1,j_p+1}=t_{1,j_q+1}=1$.
Since $j_q \le n-i$, we have $i+j_q \le n$ and $t_{i,i+j_q}=1$.
Similarly, $t_{j,j+j_p}=1$.
By the case assumption, $i+j_q=j+j_p$, so $i$ and $j$ are adjacent in $G$.

{\it Case 3.} $j-i=i_t+j_q$ for some $i_t \in \alpha$ and $j_q \in \beta$.
Then $t_{1,j_q+1}=t_{i_t+1,1}=1$.
Therefore $t_{i,i+j_q}=t_{j,j-i_s}=1$.
By the case assumption, $i+j_q=j-i_s$, so $i$ and $j$ are adjacent in $G$.
 \end{proof}

\begin{proposition}
For any Toeplitz matrix of order $n$, the $\ell$th row sum equals the $(n-\ell+1)$st column sum for any $\ell \in [n]$.
\end{proposition}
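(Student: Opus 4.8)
The plan is to compute both quantities directly from the defining relation $t_{ij}=a_{i-j}$ and to observe that they are sums of exactly the same collection of parameters $a_k$, so that no identity beyond a reindexing is needed.

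First I would write the $\ell$th row sum as $R_\ell=\sum_{j=1}^{n}t_{\ell,j}=\sum_{j=1}^{n}a_{\ell-j}$. As $j$ runs over $1,\ldots,n$, the subscript $\ell-j$ takes the values $\ell-1,\ell-2,\ldots,\ell-n$, that is, it runs over every element of the set $\{\ell-n,\ell-n+1,\ldots,\ell-1\}$ exactly once. Next I would write the $(n-\ell+1)$st column sum as $C_{n-\ell+1}=\sum_{i=1}^{n}t_{i,n-\ell+1}=\sum_{i=1}^{n}a_{i-(n-\ell+1)}$. As $i$ runs over $1,\ldots,n$, the subscript $i-(n-\ell+1)$ takes the values $\ell-n,\ell-n+1,\ldots,\ell-1$, i.e.\ it runs over the very same set $\{\ell-n,\ldots,\ell-1\}$, merely in the opposite order. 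Consequently $R_\ell$ and $C_{n-\ell+1}$ are both equal to $\sum_{k=\ell-n}^{\ell-1}a_k$, and the claim follows.

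The only point requiring care is the bookkeeping of the two index ranges: one must check that the image of $j\mapsto \ell-j$ on $[n]$ and the image of $i\mapsto i-(n-\ell+1)$ on $[n]$ coincide as sets of subscripts, which is precisely the purpose of choosing the column index $n-\ell+1$ rather than $\ell$. I do not anticipate a genuine obstacle here, since the argument is a single substitution; the reversal of order is irrelevant because we are summing.

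As an alternative I would exploit the persymmetry already implicit in the excerpt: with the permutation matrix $P=[p_{i,j}]$, $p_{i,j}=1$ iff $i+j=n+1$, one has $PTP^{T}=T^{T}$ for any Toeplitz matrix $T$ (this is the content of $T_n\langle\alpha;\beta\rangle=PT_n\langle\beta;\alpha\rangle P^{T}$ applied with the transposed roles of the diagonals). Writing the $\ell$th row sum as $\mathbf{e}_\ell^{\,T}T\mathbf{1}$ and using $P\mathbf{1}=\mathbf{1}$, $P\mathbf{e}_\ell=\mathbf{e}_{n-\ell+1}$, and $P=P^{T}=P^{-1}$, I would obtain $\mathbf{e}_\ell^{\,T}T\mathbf{1}=\mathbf{e}_\ell^{\,T}PT^{T}P\mathbf{1}=\mathbf{e}_{n-\ell+1}^{\,T}T^{T}\mathbf{1}=\mathbf{1}^{T}T\mathbf{e}_{n-\ell+1}$, which is the $(n-\ell+1)$st column sum. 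I would present the elementary index computation as the main proof and mention this matrix reformulation only as a remark.
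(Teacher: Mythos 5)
Your proof is correct, and both of your variants actually establish the statement in the generality in which it is phrased (arbitrary entries $a_k$), whereas the paper's proof is written for $(0,1)$-matrices $T_n\langle\alpha;\beta\rangle$: it splits the support of the $\ell$th row into the sets $M_1=\{\ell-i_s\}$ and $M_2=\{\ell+j_t\}$ coming from $\alpha$ and $\beta$, does the same for the $(n-\ell+1)$st column with sets $N_1,N_2$, and exhibits the explicit bijection $p\mapsto n-p+1$ between $M_1\cup M_2$ and $N_1\cup N_2$. That bijection is exactly the persymmetry $t_{\ell,p}=t_{n-p+1,\,n-\ell+1}$, so your second (matrix-conjugation) argument is the coordinate-free form of what the paper does. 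Your primary argument is genuinely different and simpler: rather than matching positions, you observe that the multiset of diagonals meeting row $\ell$ is $\{a_{\ell-n},\dots,a_{\ell-1}\}$ and that the same multiset meets column $n-\ell+1$, so both sums equal $\sum_{k=\ell-n}^{\ell-1}a_k$ with no bijection between entry positions needed. What the paper's version buys is the explicit description of the supports $M_1,M_2,N_1,N_2$ in terms of $\alpha$ and $\beta$, which is the bookkeeping it reuses elsewhere; what yours buys is brevity and full generality. The index check you flag as the only delicate point is indeed fine: $j\mapsto\ell-j$ and $i\mapsto i-(n-\ell+1)$ both map $[n]$ onto $\{\ell-n,\dots,\ell-1\}$.
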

\begin{proof}
Let $T=T_n\langle \alpha;\beta \rangle$.
Take $\ell \in [n]$ and let\[
M_1 = \{ \ell-i_s \in [n] \mid  i_s \in \alpha\}\quad \mbox{and} \quad M_2 = \{ \ell+j_{t}\in [n] \mid j_t \in \beta\}.\]
Then the nonzero columns in the $\ell$th row is a disjoint union of $M_1$ and $M_2$. Now we let\[
N_1 = \{ p \in [n] \mid p -  i_{s} = n-\ell+1 , i_s \in \alpha\}\quad \mbox{and} \quad N_2 = \{ p\in[n] \mid p+j_{t}=n-\ell+1, j_t \in \beta\}.\]
Then the nonzero entries on $n-\ell+1$st column is a disjoint union of $N_1$ and $N_2$.

We claim that for each $i=1,2$ and $p \in [n]$, $x \in M_{i}$ if and only if $n-p+1 \in N_{i}$ as follows:
\begin{tabbing}
\= $p \in M_1$ (resp.\ $p \in M_2$) \\
\> $\Leftrightarrow$ $p = \ell - i_{s}$ for some $i_s \in \alpha$ (resp.\ $p=\ell+j_t$ for some $j_t \in \beta$)\\
\> $\Leftrightarrow$ $n-p+1 = n-\ell+1 +i_s$ for some $i_s \in \alpha$ (resp.\  $n-p+1 = n-\ell+1 - j_t$ for some $j_t \in \beta$ )\\
\> $\Leftrightarrow$ $(n-p+1)-i_s=n-\ell+1$ for some $i_s \in \alpha$ (resp.\  $(n-p+1)+j_t = n-\ell+1$ for some $j_t \in \beta$ )\\
\> $\Leftrightarrow$ $n-p+1 \in N_1$ (resp.\ $n-p+1 \in N_2$).
\end{tabbing}
Thus $|M_1 \cup M_2|= |N_1 \cup N_2|$ and this completes the proof.
\end{proof}

\begin{corollary}\label{cor:maxinout} The maximum row sum and the maximum column sum of a Toeplitz matrix are the same. \label{thm:degsym}\end{corollary}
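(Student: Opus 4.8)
The plan is to deduce this immediately from the preceding Proposition, which does all the real work by pairing each row sum with a column sum. First I would recall that the Proposition asserts, for every $\ell \in [n]$, that the $\ell$th row sum of $T$ equals the $(n-\ell+1)$st column sum of $T$. The key structural observation is that the map $\sigma\colon \ell \mapsto n-\ell+1$ is an involution on $[n]$, and in particular a bijection of $[n]$ onto itself. Hence as $\ell$ ranges over all of $[n]$, the index $n-\ell+1$ also ranges over all of $[n]$, each value occurring exactly once.

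From this I would conclude that the multiset of all row sums of $T$ coincides, term by term under $\sigma$, with the multiset of all column sums of $T$. In particular the two finite sets of attained values are equal, so their maxima agree. Writing $r_\ell$ for the $\ell$th row sum and $c_m$ for the $m$th column sum, the Proposition gives $r_\ell = c_{\sigma(\ell)}$ for each $\ell$, and therefore
\[
\max_{\ell \in [n]} r_\ell \;=\; \max_{\ell \in [n]} c_{\sigma(\ell)} \;=\; \max_{m \in [n]} c_m,
\]
where the last equality holds precisely because $\sigma$ is a bijection of $[n]$. This is exactly the assertion that the maximum row sum and the maximum column sum coincide.

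There is essentially no obstacle to overcome here, since the entire content is already contained in the Proposition; the corollary reduces to the elementary fact that a maximum over a finite set of values is unchanged by reindexing through a bijection. The only point one must state with care is that the reindexing $\ell \mapsto n-\ell+1$ is legitimate exactly because it is a bijection (indeed an involution) of $[n]$ onto itself, so no value is omitted or repeated when passing from row indices to column indices.
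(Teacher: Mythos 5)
Your argument is correct and is exactly the intended deduction: the paper states this as an immediate corollary of the preceding proposition, relying on the same observation that $\ell \mapsto n-\ell+1$ is a bijection of $[n]$, so the row sums and column sums form the same multiset and hence have the same maximum. Nothing is missing.
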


\begin{corollary}\label{cor:trinaglefree}
If the row graph of a Toeplitz matrix $T$ is triangle-free, then the maximum row sum of $T$ is at most two.	
\end{corollary}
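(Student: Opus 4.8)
The plan is to argue the contrapositive: assuming the maximum row sum of $T$ is at least $3$, I will exhibit a triangle in ${\rm RG}(T)$. The natural place to look for triangles is \emph{within a single column}, because the defining property of the row graph turns any set of rows sharing a common $1$-entry into a clique. A single row with three $1$'s, by contrast, says nothing on its own about adjacencies, so the column viewpoint is the right one.

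First I would invoke Corollary~\ref{cor:maxinout}, which tells me that the maximum column sum of $T$ equals its maximum row sum. Hence, if the maximum row sum is at least $3$, then some column, say the $k$th, also contains at least three $1$'s; let $i_1, i_2, i_3$ be three distinct rows with $t_{i_1,k}=t_{i_2,k}=t_{i_3,k}=1$.

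Next I would read off the adjacencies directly from the definition of ${\rm RG}(T)$: for each of the three pairs drawn from $\{i_1,i_2,i_3\}$, the column $k$ witnesses a common nonzero entry, so each pair is an edge of ${\rm RG}(T)$. Thus $\{i_1,i_2,i_3\}$ induces a triangle, contradicting the hypothesis that ${\rm RG}(T)$ is triangle-free. Therefore the maximum row sum must be at most $2$.

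I do not anticipate a genuine obstacle here; the only real idea is to pass from rows to \emph{columns}, since it is a column — not a row — that forces a clique, and then to bridge back to the row sum via the symmetry supplied by Corollary~\ref{cor:maxinout}. Note in particular that Theorem~\ref{thm:edgecond} is not needed for this argument: the statement follows from the column-clique observation together with the row-sum/column-sum equality alone.
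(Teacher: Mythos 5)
Your argument is correct and is essentially the paper's own proof read in the contrapositive direction: both rest on the observation that a column with three $1$'s forces a clique among the corresponding rows (so triangle-freeness bounds the column sums by $2$) together with Corollary~\ref{cor:maxinout} to transfer the bound to row sums. No meaningful difference in approach.
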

\begin{proof}
Suppose that a triangle-free graph $G$ is the row graph of a Toeplitz matrix $T$.
Then, by the definition of row graph, the maximum column sum is at most two.
Thus, by Corollary~\ref{cor:maxinout}, the maximum row sum of $T$ is at most two.
\end{proof}

In \cite{ijcomp}, it is shown that if $D$ has maximum indegree and maximum outdegree at most $2$, then its competition graph is a disjoint union of cycles and paths.
In other words, if a Toeplitz matrix $T$ belongs to ${\mathcal T}_{\le 2}$, then ${\rm RG}(T)$ is a disjoint union of cycles and paths.

\begin{corollary}\label{cor:starcomp} If $T$ is a Toeplitz matrix with the maximum row sum at most $2$, then the row graph of $T$ is a disjoint union of cycles and paths.
\end{corollary}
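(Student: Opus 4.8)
The plan is to reduce the statement to the competition-graph result of \cite{ijcomp} quoted just above, namely that any digraph with maximum indegree and maximum outdegree at most $2$ has a competition graph that is a disjoint union of cycles and paths. Since $\mathrm{RG}(T)$ is by definition the competition graph of the digraph $D$ of $T$, it suffices to verify that $D$ satisfies these two degree bounds.

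First I would set up the dictionary between the entries of $T$ and the arcs of $D$. Because the adjacency matrix of $D$ is the support of $T$, the outdegree of a vertex $i$ in $D$ is exactly the number of nonzero entries in the $i$th row of $T$, that is, the $i$th row sum, and the indegree of a vertex $j$ is the $j$th column sum. Consequently the hypothesis that $T$ has maximum row sum at most $2$ says precisely that $D$ has maximum outdegree at most $2$.

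It remains to bound the maximum indegree, and this is the only place where the Toeplitz structure enters. By Corollary~\ref{cor:maxinout}, the maximum row sum and the maximum column sum of a Toeplitz matrix coincide; hence the maximum column sum of $T$ is also at most $2$, which is to say that $D$ has maximum indegree at most $2$. With both degree bounds in hand, applying the result of \cite{ijcomp} to $D$ shows that its competition graph $\mathrm{RG}(T)$ is a disjoint union of cycles and paths, as desired.

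I do not expect any genuine obstacle here: the argument is essentially an assembly of facts already established. All of the substantive content lies in Corollary~\ref{cor:maxinout}, which converts the row-sum hypothesis into the needed indegree bound, and in the imported structural theorem of \cite{ijcomp}; once the translation between row/column sums and out/in degrees is made explicit, the corollary follows in a line.
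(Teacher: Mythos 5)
Your proposal is correct and follows exactly the route the paper intends: it invokes the result of \cite{ijcomp} on digraphs with maximum indegree and outdegree at most $2$, using Corollary~\ref{cor:maxinout} to transfer the row-sum bound to the column sums. The paper leaves this translation implicit; you have simply spelled it out.
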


Thus, by Corollaries~\ref{cor:trinaglefree} and \ref{cor:starcomp}, we obtain the following corollary.
\begin{corollary}\label{cor:starcomp1} A triangle-free Toeplitz-row graph is a disjoint union of cycles and paths.
\end{corollary}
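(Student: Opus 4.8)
The plan is to obtain this statement as an immediate composition of the two preceding corollaries, since the substantive work has already been carried out there. First I would unpack the definition of a triangle-free Toeplitz-row graph: by the terminology fixed in the introduction, a graph $G$ is a Toeplitz-row graph precisely when $G = {\rm RG}(T)$ for some $T \in {\mathcal T}$, so a \emph{triangle-free} Toeplitz-row graph is such a $G$ that happens to contain no triangle. Thus it suffices to fix an arbitrary $T \in {\mathcal T}$ whose row graph ${\rm RG}(T)$ is triangle-free and show that ${\rm RG}(T)$ decomposes as a disjoint union of cycles and paths.

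The key step is then to feed the triangle-freeness hypothesis into Corollary~\ref{cor:trinaglefree}, which yields that the maximum row sum of $T$ is at most two; equivalently, $T \in {\mathcal T}_{\le 2}$. Having promoted the graph-theoretic hypothesis into an arithmetic bound on $T$, I would invoke Corollary~\ref{cor:starcomp}, whose hypothesis is exactly that the maximum row sum of $T$ is at most two, to conclude that ${\rm RG}(T)$ is a disjoint union of cycles and paths. Since $G = {\rm RG}(T)$, this is the desired conclusion.

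I do not anticipate a genuine obstacle here, as the argument is purely a chaining of two already-established implications; the only point requiring a little care is to make sure the definition of ``triangle-free Toeplitz-row graph'' is correctly matched to the hypothesis ``row graph of a Toeplitz matrix $T$ is triangle-free'' appearing in Corollary~\ref{cor:trinaglefree}, so that no additional generality is silently assumed or lost. All the real content—namely the passage from triangle-freeness to the row-sum bound (via the row/column sum symmetry of Corollary~\ref{cor:maxinout}) and the passage from the row-sum bound to the path-and-cycle structure (via the cited result of~\cite{ijcomp})—has been deposited in the earlier corollaries, so the present statement should follow in a single sentence once the definitions are aligned.
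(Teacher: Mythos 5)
Your proof is correct and takes essentially the same route as the paper, which likewise derives this corollary by chaining Corollary~\ref{cor:trinaglefree} (triangle-freeness forces maximum row sum at most two) with Corollary~\ref{cor:starcomp} (maximum row sum at most two forces a disjoint union of cycles and paths). Nothing further is needed.
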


Let ${\mathcal G}_{\le 2}$ be the set of graphs with degree at most $2$, that is, the set of graphs each of which is a disjoint union of cycles and paths. Then Corollary~\ref{cor:starcomp1} may be restated as follows:
\[\{G \mid G={\rm RG}(T) \text{ for some $T \in {\mathcal T}$ and } G \text{ is triangle-free}\} \subseteq {\mathcal G}_{\le 2}.\]

In the rest of this section, we will examine disjoint union of cycles and paths that are row graphs of Toeplitz matrices.
By Corollaries~\ref{cor:trinaglefree} and \ref{cor:starcomp}, we only need take a look at Toeplitz matrices in ${\mathcal T}_{\le 2}$.
We can easily check that if $T_n \langle \alpha ; \beta \rangle \in {\mathcal T}_{\le 2}$, then $\alpha \le 2$ and $\beta \le 2$.
However, the following matrix shows that the converse is not true;
\begin{align*}
T_7 \langle 1,3;2,5 \rangle= \left(\begin{array}{ccccccc}
0 & 0 & 1 & 0 & 0 & 1 & 0\\
1 & 0 & 0 & 1 & 0 & 0 & 1\\
0 & 1 & 0 & 0 & 1 & 0 & 0\\
1 & 0 & 1 & 0 & 0 & 1 & 0\\
0 & 1 & 0 & 1 & 0 & 0 & 1\\
0 & 0 & 1 & 0 & 1 & 0 & 0\\
0 & 0 & 0 & 1 & 0 & 1 & 0
\end{array}\right).
\end{align*}
 Based on the observation,  we present a necessary and sufficient condition for a Toeplitz matrix $T=T_n \langle \alpha ; \beta \rangle$
belonging to ${\mathcal T}_{\le 2}$ when $\alpha \ne \emptyset$ and $\beta\ne \emptyset$, which will play a key role in studying triangle-free Toeplitz-row graphs.
We can easily check that  $T=T_n \langle \alpha ; \beta \rangle$ belongs to ${\mathcal T}_{\le 2}$  if one of $\alpha$ and $\beta$ is the empty set and the other set has order at most $2$.

\begin{theorem} \label{thm:triangle-free}
Let $T=T_{n}\langle i_1,\ldots,i_{k_1};j_1,\ldots,j_{k_2}\rangle$ where $k_1,k_2 \ge 1$.
Then the maximum row sum of $T$ is at most $2$ if and only if the following property holds:
\begin{itemize}
\item[$(\star)$] $k_1, k_2 \le 2$, and especially if $k_2 = 2$, then $i_1 + j_2 \ge n$ and if $k_1=2$, then  $i_2 + j_1 \ge n$.
\end{itemize}
\label{thm:ind2}
\end{theorem}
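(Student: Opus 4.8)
The plan is to convert the row-sum condition into a purely combinatorial statement about the index sets $\alpha$ and $\beta$, using the description of a single row's nonzero entries already worked out in the proof of the preceding proposition. Writing $r(\ell)$ for the $\ell$th row sum, that computation shows the nonzero columns of row $\ell$ form $M_1 \cup M_2$ with $M_1 = \{\ell - i_s \in [n] \mid i_s \in \alpha\}$ and $M_2 = \{\ell + j_t \in [n] \mid j_t \in \beta\}$. Since $1 \le \ell - i_s \le n$ is equivalent to $i_s \le \ell - 1$ and $1 \le \ell + j_t \le n$ is equivalent to $j_t \le n - \ell$, I would first record the formula
\[
r(\ell) = |\alpha \cap [\ell-1]| + |\beta \cap [n-\ell]|.
\]
Here the first summand is nondecreasing in $\ell$ and the second is nonincreasing; both directions below rest on this identity.

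For the forward direction, suppose $r(\ell) \le 2$ for all $\ell$. Evaluating at $\ell = n$ gives $r(n) = |\alpha \cap [n-1]| = k_1$ and at $\ell = 1$ gives $r(1) = |\beta \cap [n-1]| = k_2$, so $k_1, k_2 \le 2$ at once. To obtain the inequalities, assume $k_2 = 2$ and test the single row $\ell = i_1 + 1$, which lies in $[n]$ because $i_1 \le n-1$. There $i_1 \in \alpha \cap [\ell-1]$, so the first summand is at least $1$; if we had $i_1 + j_2 < n$, then $j_2 \le n - \ell$, forcing both $j_1, j_2 \in \beta \cap [n-\ell]$ and hence $r(\ell) \ge 3$, a contradiction. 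Thus $i_1 + j_2 \ge n$. The case $k_1 = 2$ is symmetric: testing $\ell = n - j_1$ puts $j_1$ into $\beta \cap [n-\ell]$, and were $i_2 + j_1 < n$ it would put both $i_1, i_2$ into $\alpha \cap [\ell-1]$, again giving $r(\ell) \ge 3$.

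For the converse, assume $(\star)$ and suppose toward a contradiction that $r(\ell) \ge 3$ for some $\ell$. Since $|\alpha \cap [\ell-1]| \le k_1 \le 2$ and $|\beta \cap [n-\ell]| \le k_2 \le 2$, one summand must equal $2$ and the other be at least $1$. If $|\alpha \cap [\ell-1]| = 2$ and $|\beta \cap [n-\ell]| \ge 1$, then $k_1 = 2$ with $i_2 \le \ell - 1$ and $j_1 \le n - \ell$, whence $i_2 + j_1 \le n - 1$, contradicting $(\star)$; the case $|\beta \cap [n-\ell]| = 2$ with $|\alpha \cap [\ell-1]| \ge 1$ contradicts $i_1 + j_2 \ge n$ in the same fashion. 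These two cases exhaust the possibilities $(2,1)$, $(1,2)$, $(2,2)$, so $r(\ell) \le 2$ for every $\ell$, as required.

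The argument is largely bookkeeping once the row-sum formula is in place. I expect the only point demanding care to be the forward direction, where the witness rows $\ell = i_1 + 1$ and $\ell = n - j_1$ must be chosen precisely so that one side already contributes a guaranteed $1$ while the hypothetical failure of the inequality forces a full $2$ from the other side; this choice of test rows is the main, if modest, obstacle.
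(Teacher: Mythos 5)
Your proof is correct, and it follows essentially the same route as the paper's: both arguments reduce to counting, for a given row $\ell$, which shifts $\ell-i_s$ and $\ell+j_t$ land in $[n]$, with the converse handled by a two-summand case analysis (the paper phrases this as a pigeonhole on three witness columns) and the forward direction by exhibiting a bad row when an inequality fails (the paper tests $\ell=n-j_2$ where you test $\ell=i_1+1$, but these are interchangeable). Your explicit formula $r(\ell)=|\alpha\cap[\ell-1]|+|\beta\cap[n-\ell]|$ is a clean packaging of what the paper does implicitly.
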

\begin{proof}
We first show the `if' part. To reach a contradiction, suppose that the row sum of the $u$th row is at least $3$ for some $u \in [n]$. Then there are $\{v_1,v_2,v_3\} \subset [n]$ such that the entries $(u,v_1)$, $(u,v_2)$, and $(u,v_3)$ of $T$ is $1$. By the definition of Toeplitz matrix, for each $i=1,2,3$, either\begin{equation}
u-v_i \in \alpha\label{eq:1} \end{equation}
or \begin{equation}
v_i-u \in \beta.\label{eq:2}\end{equation}

Since $k_1,k_2\le 2$, exactly two of $v_1,v_2,v_3$ satisfy \eqref{eq:1} or exactly two of $v_1,v_2,v_3$ satisfy \eqref{eq:2}.
We may assume the former so that $u-v_1 = i_1$, $u-v_2 =i_2$, and $v_3-u \in \beta$.
Then $v_3- v_2 \ge i_2 + j_1 \ge n$, which is impossible since $\{v_2,v_3\} \subseteq [n]$.
Thus the maximum row sum of $T$ is at most $2$.

Now we show the contrapositive of the `only if' part.
We can easily see that the $n$th row sum and the $1$st row sum are $k_1$ and $k_2$, respectively. Therefore $k_1, k_2 \le 2$.
If $k_2 = 2$ and $i_1+j_2 < n$, then the entries $(n-j_2, n-i_1-j_2)$, $(n-j_2,n+j_1-j_2)$, and $(n-j_2,n)$ equal $1$ and so the $(n-j_2)$th row sum is at least $3$.
A similar argument can be applied for the case $k_1 = 2$ and $i_2+ j_1 < n$.
\end{proof}

We characterize a Toeplitz matrix $T_n(\alpha,\beta) \in {\mathcal T}_{\le 2}$ with $|\alpha|=|\beta|=2$ whose row graph is triangle-free.
\begin{theorem} \label{thm:chartriangle-free}
Let $T=T_n\langle i_1,i_2 ; j_1,j_2 \rangle$ with $i_1+j_2 \ge n$ and $i_2 +j_1 \ge n$. Then ${\rm RG}(T)$ contains a triangle if and only if one of the following holds:
\begin{itemize}
\item[{\rm (i)}] $2i_2 = 3i_1+j_1$ and $n >2i_1+j_1$;
\item[{\rm (ii)}] $2j_2 = i_1+3j_1$ and $n > i_1+2j_1$;
\item[{\rm (iii)}] $i_2 = 3i_1+2j_1$;
\item[{\rm (iv)}] $j_2 = 2i_1+3j_1$;
\item[{\rm (v)}] $i_2+j_2 = 2(i_1+j_1)$.
\end{itemize}
\end{theorem}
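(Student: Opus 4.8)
The plan is to translate the triangle question into arithmetic on the three ``edge-differences'' produced by Theorem~\ref{thm:edgecond}. Write $d_\alpha=i_2-i_1$, $d_\beta=j_2-j_1$ and $s_{pq}=i_p+j_q$. Theorem~\ref{thm:edgecond} says that a difference $j-i$ yields an edge of ${\rm RG}(T)$ only if it equals $d_\alpha$ (via \eqref{eq:case1}), or $d_\beta$ (via \eqref{eq:case3}), or some $s_{pq}$ (via \eqref{eq:case2}). The standing hypotheses $i_1+j_2\ge n$ and $i_2+j_1\ge n$ are the decisive simplifier: every edge-difference is at most $n-1$, so the cross sums $s_{12}=i_1+j_2$, $s_{21}=i_2+j_1$ and $s_{22}\ge\max\{s_{12},s_{21}\}$ are all too large ever to occur. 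Hence the only admissible edge-differences are $d_\alpha$, $d_\beta$ and $s_{11}=i_1+j_1$, the last one only when $s_{11}<n$.

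By Theorem~\ref{thm:ind2} we have $T\in{\mathcal T}_{\le 2}$, so by Corollary~\ref{cor:starcomp} ${\rm RG}(T)$ is a disjoint union of paths and cycles; a triangle is therefore just a set of three vertices $a<b<c$ that are pairwise adjacent, i.e.\ $b-a$, $c-b$ and $c-a$ all lie in $\{d_\alpha,d_\beta,s_{11}\}$ and $c-a=(b-a)+(c-b)$. I would then enumerate every way an admissible difference is the sum of two admissible differences, with the sum $c-a$ being the strict maximum. When $c-a=s_{11}$ the possibilities are $2d_\alpha=s_{11}$, $2d_\beta=s_{11}$, $d_\alpha+d_\beta=s_{11}$; when $c-a=d_\alpha$ they are $2s_{11}=d_\alpha$, $2d_\beta=d_\alpha$, $d_\beta+s_{11}=d_\alpha$; and symmetrically when $c-a=d_\beta$. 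Two entries are discarded at once, since $d_\beta+s_{11}=i_1+j_2=s_{12}\ge n$ and $d_\alpha+s_{11}=i_2+j_1=s_{21}\ge n$ exceed every difference. Rewriting the rest in terms of $i_1,i_2,j_1,j_2$ turns $2d_\alpha=s_{11}$, $2d_\beta=s_{11}$, $2s_{11}=d_\alpha$, $2s_{11}=d_\beta$, $d_\alpha+d_\beta=s_{11}$ into exactly (i)--(v), and leaves the two ``pure'' identities $2d_\alpha=d_\beta$ and $2d_\beta=d_\alpha$.

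It remains to decide, for each surviving identity, whether the forced triangle fits inside $[n]$: the algebraic identity is necessary but not sufficient, because every edge carries range restrictions. An $\alpha$-edge \eqref{eq:case1} forces its smaller endpoint $\ge i_1+1$, a $\beta$-edge \eqref{eq:case3} forces its smaller endpoint $\le n-j_2$ (and its larger $\le n-j_1$), the cross edge forces $a+j_1\le n$; and, crucially, all three vertices must lie in $[n]$, so $a\ge 1$ and $c\le n$. Intersecting these for each identity yields an interval of admissible $a$, and the whole content is to find which bound is binding. The two pure identities die here: each forces both an $\alpha$-edge (pushing $a\ge i_1+1$ directly on the long side) and a $\beta$-edge $\{a,a+d_\beta\}$ (pushing $a\le n-j_2$), so $i_1+1\le n-j_2$, contradicting $i_1+j_2\ge n$. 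For the five mixed identities the long edge is the cross edge rather than an $\alpha$- or $\beta$-edge, which breaks this deadlock; the computation then gives for (i) the interval $[\,i_1+1,\ n-s_{11}\,]$ (the binding upper bound being $c\le n$), nonempty iff $n>2i_1+j_1$, and for (ii) the interval forced by the two $\beta$-edges, nonempty iff $n>i_1+2j_1$. For (iii), (iv), (v) no side condition is needed: each of these identities together with $i_2,j_2\le n-1$ forces $s_{11}<n$ (for (v), $s_{22}=i_2+j_2\le 2n-2$ makes $s_{22}=2s_{11}$ impossible unless $s_{11}\le n-1$; for (iii) and (iv), $i_2=i_1+2s_{11}$ and $j_2=j_1+2s_{11}$ bound $s_{11}$), and the resulting non-emptiness conditions collapse to $n>i_2$, $n>j_2$ and $s_{11}<n$, all automatic.

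I expect the main obstacle to be exactly this range bookkeeping. It is tempting to track only the edge-type restrictions and forget the plain requirement $c\le n$; doing so would wrongly produce $n>i_1+j_1$ in case (i) instead of the correct $n>2i_1+j_1$, so one must confirm case by case which of the competing upper and lower bounds actually binds. A second point needing care is to check that each of (i)--(v) forces $s_{11}<n$, guaranteeing that the cross edges on which the triangles rely genuinely exist and that the conditions are never vacuously satisfied; and to absorb degenerate coincidences (such as $d_\alpha=d_\beta$, or a single difference admitting two edge-types) into the enumeration without separate treatment.
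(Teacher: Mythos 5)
Your proposal is correct and follows essentially the same route as the paper: both use Theorem~\ref{thm:edgecond} together with $i_1+j_2\ge n$ and $i_2+j_1\ge n$ to restrict every edge-difference to $\{i_2-i_1,\,j_2-j_1,\,i_1+j_1\}$, enumerate the additive decompositions of one admissible difference into the other two (discarding those that force a difference $\ge n$), and extract the side conditions on $n$ from the range restrictions of Theorem~\ref{thm:edgecond}. The only notable difference is presentational: you certify existence by non-emptiness of the interval of admissible base vertices rather than by exhibiting one explicit triangle per case, and this bookkeeping is in fact slightly more careful in case (v), where the paper's displayed triangle $\{1,\,1+j_2-j_1,\,1+i_2+j_2-(i_1+j_1)\}$ needs $j_2-j_1\ge i_1$ to be a genuine triangle, whereas a suitably shifted base vertex (which your interval argument supplies) always works.
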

\begin{proof} We first show the `if' part.
We can check that for each cases (i), (ii), (iii), (iv), and (v),
$\{1+i_1, 1+i_2, 1+ 2i_2-i_1\}$, $\{1, 1+j_2-j_1, 1+ 2j_2-2j_1\}$, $\{1+i_1,1+2i_1+j_1, 1+3i_1+2j_1\}$, $\{1,1+i_1+j_1, 1+2i_1+2j_1\}$, and $\{1,1+j_2-j_1, 1+i_2+j_2-(i_1+j_1)\}$
is a triangle of ${\rm RG}(T)$, respectively.

Now we show the `only if' part.
Suppose that there is a triangle $uvw$ of ${\rm RG}(T)$ with $u<v<w$.
Then $\{v-u,w-v,w-u\} \subset \{i_1+j_1,j_2-j_1,i_2-i_1\}$ by the hypothesis and Theorem~\ref{thm:edgecond}.
Suppose that $\{w-v,v-u\}$ is either $\{i_1+j_1,i_2-i_1\}$ or $\{i_1+j_1,j_2-j_1\}$.
Then $w-u=i_2+j_1 \ge n$ or $w-u=i_1+j_2 \ge n$, which is impossible.
Suppose that $\{w-v,v-u\} = \{i_2-i_1\}$ and $w-u = j_2-j_1$. Then, by Theorem~\ref{thm:edgecond}, $u > i_1$ and $w+j_1 \le n$.
However, $w+j_1 = u+j_2 > i_1+j_2 \ge n$ and we reach a contradiction. Similarly, we can show that the case $\{w-v,v-u\} = \{j_2-j_1\}$ and $w-u = i_2-i_1$ cannot happen either.

Then the following are the only possible remaining cases since $w-u = (w-v)+(v-u)$:
\begin{itemize}
\item[(1)] $\{w-v,v-u\}=\{i_2-i_1\}$ and $w-u = i_1+j_1$;
\item[(2)] $\{w-v,v-u\}=\{j_2-j_1\}$ and $w-u = i_1+j_1$;
\item[(3)] $\{w-v,v-u\}=\{i_1+j_1\}$ and $w-u = i_2-i_1$;
\item[(4)] $\{w-v,v-u\}=\{i_1+j_1\}$ and $w-u = j_2-j_1$;
\item[(5)] $\{w-v,v-u\}=\{i_2-i_1,j_2-j_1\}$ and $w-u = i_1+j_1$.
\end{itemize}
In the case (1), since $u>i_1$, $n \ge w =u+i_1+j_1 > 2i_1+j_1$. Moreover, since $w-u = (w-v)+(v-u)$, $2i_2 = 3i_1+j_1$.
In the case (2), since $w+j_1 \le n$, $n \ge w+j_1 = u+i_1+2j_1 > i_1+2j_1$. In addition, $2j_2 = i_1+3j_1$.
In the case (3), since $u>i_1$, $n \ge w =u+2(i_1+j_1) > 3i_1+2j_1$.
In the case (4), since $w+j_1 \le n$, $n \ge w+j_1 \ge u+2(i_1+j_1)+j_1 > 2i_1+3j_1$.

Furthermore, in the case (3), $i_2 = 3i_1+2j_1$ and $n > 3i_1+2j_1 = i_2$; in the case (4) $j_2 = 2i_1+3j_1$ and $n > 2i_1+3j_1 = j_2$. By the definition of Toeplitz graph, $n > \max\{i_2,j_2\}$, so there is no restriction on $n$ in cases (3) and (4).
Finally, we have  $i_2+j_2 = 2(i_1+j_1)$ in the case (5). Hence we obtain the desired result.
\end{proof}

To find out the exact number of cycles or paths, we need the following lemma.
	\begin{lemma}\label{lemma:path-1}
		Take $T=T_n\langle i_1,\ldots,i_{k_1};j_1,\ldots,j_{k_2} \rangle \in {\mathcal T}_{\le 2}$ with $k_1,k_2 \ge 1$ and let $G$ be the row graph of $T$. Then the following are true:
		\begin{itemize}
			\item[(a)] If $v_1v_2\cdots v_\ell$ is a path in $G$ for an integer $\ell \ge 2$,
			then $(v_1-1)(v_2-1)\cdots (v_\ell -1)$ is also a path in $G$ if $v_i \notin \{1, 1+i_1\}$ for any $i\in [\ell]$.
			\item[(b)] If $v_1v_2\cdots v_\ell$ is a path component in $G$ for an integer $\ell \ge 2$ with $v_i \notin \{1, 1+i_1\}$ for each $i\in [\ell]$ and
			\begin{equation}\label{eq:forbidden}
\{v_1,v_\ell\} \cap \{n-i_2+i_1+1, n-j_2+1, n-j_1+1, n-(i_1+j_1)+1\}=\emptyset,
\end{equation} then $(v_1-1)(v_2-1)\cdots (v_\ell -1)$ is also a path component in $G$ (if $|\alpha|= 1$, then we ignore $i_2$ and if $|\beta|=1$, then we ignore $j_2$).
			\item[(c)] If $v_1v_2\cdots v_\ell v_1$ is a cycle in $G$ for an integer $\ell \ge 2$, then $(v_1-1)(v_2-1)\cdots (v_\ell -1)(v_1-1)$ is also a cycle in $G$ if $v_i \notin \{1,1+i_1\}$ for any $i\in [\ell]$.
		\end{itemize}
	\end{lemma}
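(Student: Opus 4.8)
The plan is to isolate a single ``edge-shifting'' mechanism and then read off all three parts from it. The engine is Theorem~\ref{thm:edgecond} together with the membership inequalities it records. First I would pin down the only relevant feature of $\mathcal{T}_{\le 2}$: by property $(\star)$ of Theorem~\ref{thm:ind2}, the sums $i_1+j_2$, $i_2+j_1$, $i_2+j_2$ are all $\ge n$ and hence cannot equal any difference $j-i\le n-1$, so the sole edge-differences produced by Theorem~\ref{thm:edgecond} are
\[ i_2-i_1 \ \text{(Case 1)}, \quad j_2-j_1 \ \text{(Case 2)}, \quad i_1+j_1 \ \text{(Case 3)} \]
(dropping $i_2$, resp.\ $j_2$, when $k_1=1$, resp.\ $k_2=1$). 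Each edge $\{i,j\}$ with $i<j$ then carries explicit constraints: Case~1 needs $i_1\le i-1$ and $i_2\le j-1$; Case~2 needs $j_2\le n-i$ and $j_1\le n-j$; Case~3 needs $j_1\le n-i$ and $i_1\le j-1$.

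For (a) and (c) I would prove one claim: if $\{i,j\}$ ($i<j$) is an edge and $i,j\notin\{1,1+i_1\}$, then $\{i-1,j-1\}$ is an edge. Subtracting $1$ from both endpoints leaves the difference unchanged and only \emph{weakens} the Case~2 and Case~3 inequalities (which bound $i,j$ from above, or from below by quantities that stay satisfied once $i\ge 2$), so those cases transfer for free as soon as $i-1\ge 1$, i.e.\ $i\ne 1$. In Case~1 the constraint $i_1\le i-1$ must improve to $i_1\le i-2$, which is exactly $i\ne 1+i_1$; and then $j=i+(i_2-i_1)\ge i_2+2$ automatically gives $i_2\le j-2$, so no separate hypothesis on the larger endpoint is needed. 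Applying this to every consecutive pair of the path (resp.\ every edge of the cycle, including $v_\ell v_1$) and noting distinct vertices stay distinct after subtracting $1$ yields (a) and (c) at once.

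For (b) I would first invoke that $G$ has maximum degree $2$ (Corollary~\ref{cor:starcomp}): by (a) the shifted sequence $(v_1-1)\cdots(v_\ell-1)$ is already a path, so each interior vertex has degree $2$ and admits no further neighbour. It remains to show the two endpoints $v_1-1$ and $v_\ell-1$ have degree exactly $1$. Here I run the mechanism in reverse: I claim that if $p:=v_1-1$ has a neighbour $w$, then, unless $v_1$ equals one of the four forbidden values, the edge $\{p,w\}$ shifts \emph{up} to an edge $\{v_1,w+1\}$; since $v_1$ is a degree-$1$ endpoint of a component, its only neighbour is $v_2$, forcing $w+1=v_2$, i.e.\ $w=v_2-1$, the expected path neighbour. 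The identical argument applies to $v_\ell-1$ using $v_{\ell-1}$.

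The crux, and the one genuinely delicate step, is matching the failure modes of this upward shift to the forbidden set in~\eqref{eq:forbidden}. Shifting $\{p,w\}$ up by $1$ can fail only through a boundary obstruction, and a short case check (splitting on which of $p,w$ is larger and on which of Cases~1--3 realises the edge) shows that every obstruction pins $v_1$ to a specific value: a Case~1 edge with $w=n$ forces $v_1=n-i_2+i_1+1$; a Case~2 edge forces $v_1=n-j_2+1$ (when $p<w$) or $v_1=n-j_1+1$ (when $w<p$); a Case~3 edge forces $v_1=n-j_1+1$ or, when $w=n$, $v_1=n-(i_1+j_1)+1$. These are exactly the four values barred by~\eqref{eq:forbidden} (with $i_2$, resp.\ $j_2$, suppressed when $k_1=1$, resp.\ $k_2=1$), and one checks separately that $w=n$ can never arise through Case~2 (it would require $j_1\le 0$). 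Hence avoiding the forbidden set precisely guarantees that no endpoint acquires a spurious neighbour, completing (b). I expect this endpoint/boundary bookkeeping to be the main obstacle, whereas (a) and (c) fall out immediately from the single downward-shift claim.
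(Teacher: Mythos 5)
Your proposal is correct and follows essentially the same route as the paper: both reduce adjacency to the three difference types $i_2-i_1$, $j_2-j_1$, $i_1+j_1$ together with their boundary constraints (Theorems~\ref{thm:edgecond} and~\ref{thm:ind2}), prove (a) and (c) by shifting every edge down by one, and prove (b) by shifting a putative extra edge at an endpoint back up, with the four values in \eqref{eq:forbidden} accounting exactly for the boundary obstructions. The only quibble is that a Case~3 edge can obstruct the upward shift only when its larger endpoint equals $n$ (yielding $n-(i_1+j_1)+1$), not via $n-j_1+1$; this over-inclusion is harmless since that value is already forbidden.
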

	\begin{proof}
	By Theorems~\ref{thm:edgecond} and \ref{thm:ind2}, vertices $x$ and $y$ with $y>x$ are adjacent if and only if one of the following is true:
\begin{itemize}
\item[(i)] $y-x = i_2-i_1$, $i_1 \le x-1$, and $i_2 \le y-1$;
\item[(ii)] $y-x = j_2-j_1$, $j_2 \le n-x$, and $j_1 \le n-y$;
\item[(iii)] $y-x=i_1+j_1$
\end{itemize}
(if $k_1= 1$ (resp. $k_2 =1$), then we ignore the term $i_2-i_1$ (resp. $j_2-j_1$)).

Suppose that $v_1v_2 \cdots v_\ell$ is a path in $G$.
To show (a), suppose that $S \cap \{1,1+i_1\}=\emptyset$.
Fix $k \in [\ell-1]$ and $x_k = \min\{v_{k+1},v_k\}$ and $y_k = \max\{v_{k+1},v_k\}$.
Since $x_k\neq 1$ by the hypothesis, $1 \le x_k-1 \le y_k-1$.
Since $x_k$ and $y_k$ are adjacent, one of (i), (ii), (iii) for $x=x_k$ and $y=y_k$ is true.

In the case (i), $i_1+1 \le x_k-1$ since $x_k \neq 1+i_1$.
Thus $i_1 \le (x_k-1)-1$.
Since $i_1-x_k=i_2-y_k$, $i_2 \le (y_k-1)-1$.
In the case (ii), $j_2 < n-(x_k-1)$ and $j_1 < n-(y_k-1)$.
Thus $x_k-1$ and $y_k-1$ are adjacent in $G$.
Since $k$ was arbitrarily chosen, we may conclude that $v_i-1$ and $v_{i+1}-1$ are adjacent in $G$ for all $i \in [\ell-1]$.
Consequently, we have shown that $(v_1-1)(v_2-1)\cdots (v_\ell -1)$ is also a path in $G$.
	
To show (b), suppose $v_1v_2 \cdots v_\ell$ is a path component in $G$ for an integer $\ell \ge 2$ with $v_i \notin \{1, 1+i_1\}$ for any $i\in [\ell]$ and satisfying \eqref{eq:forbidden}.
		Then $Q:=(v_1-1)(v_2-1) \cdots (v_\ell-1)$ is a path by (a).
		Suppose, to the contrary, $(v_1-1)(v_2-1)\cdots(v_\ell -1)$ is not a path component. Then $Q$ is extendded to a longer path by Theorem~\ref{thm:ind2}.
		We consider the case in which there is a vertex $v\neq v_{\ell-1}-1$ which is adjacent to  $v_\ell -1$ in $G$.	
		Let
		\[x=\min\{v,v_\ell-1\} \quad \mbox{and} \quad  y=\max\{v,v_\ell-1\}.\]
	    Since $x$ and $y$ are adjacent, one of (i), (ii), (iii) is true.	
		
		Suppose $v+1 > n$.
		Since $v$ is a vertex in $G$, $v=n=y$.
		Then either (i) or (iii) is true.		
		Thus either $v_\ell-1 = n-(i_1+j_1)$
		or $v_\ell-1 = n-i_2+i_1$, which contradicts \eqref{eq:forbidden}.
		
		Now, suppose that $v+1 \le n$.
		Noting $y-x=(y+1)-(x+1)$, we know that $v+1$ and $v_\ell$ are adjacent in the case (iii).
		In the case (ii),
		$y-x=i_2-i_1$, $i_1 \le x-1 <x$, and $i_2 \le y-1<y$. Thus $v+1$ and $v_\ell$ are adjacent in $G$ and we reach a contradiction.
		In the case (i), $y-x = j_2-j_1$, $j_2 \le n-x$, and $j_1 \le n-y$.
		Since $v_\ell-1 \not\in \{n-j_1, n-j_2\}$ by \eqref{eq:forbidden}, $j_2 \le n-(x+1)$ or  $j_1 \le n-(y+1)$.
		Since $y-x = j_2-j_1$, $j_2 \le n-(x+1)$ and  $j_1 \le n-(y+1)$.
		Thus $v+1$ and $v_\ell$ are adjacent in $G$ and we reach a contradiction.

		By a similar argument, one can show that there is no vertex except $v_2-1$ that is adjacent to $v_1 -1$ in $G$.
		Hence, $(v_1-1)(v_2-1)\cdots(v_\ell-1)$ is a path component in $G$ and (b) is valid.
		
		To show (c), suppose $v_1v_2\cdots v_\ell v_1$ is a cycle in $G$.
		By (a), $(v_1-1)(v_2-1)\cdots(v_\ell-1)$ and $(v_\ell-1)(v_1-1)$ are paths in $G$.
		Therefore $(v_1-1)(v_2-1)\cdots(v_\ell -1)(v_1-1)$ is also a cycle in $G$.
	\end{proof}

\begin{theorem}\label{thm:forbidden}
Let $G$ be a triangle-free Toeplitz-row graph.
Then the following are true:
\begin{itemize}
	\item[(a)] There are no three cycle of different lengths in $G$.
	\item[(b)] There are no seven paths of different lengths in $G$.
\end{itemize}
\end{theorem}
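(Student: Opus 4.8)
The plan is to realize $G$ as the row graph of some $T=T_n\langle i_1,\dots,i_{k_1};j_1,\dots,j_{k_2}\rangle$ and exploit that, by Corollaries~\ref{cor:trinaglefree} and \ref{cor:starcomp1}, we have $T\in\mathcal T_{\le 2}$ and $G$ is a disjoint union of paths and cycles, so $k_1,k_2\le 2$. The engine for both parts is the downshift operation of Lemma~\ref{lemma:path-1}: it lets me translate a cycle, or a path component, down by $1$ without changing its length, provided the configuration avoids the obstruction sets named in the lemma. The governing idea is that by repeatedly downshifting, every cycle (resp.\ every path component) can be pushed down until it becomes \emph{anchored} on one of a small fixed collection of vertices or values; since translation preserves length, the number of distinct lengths is then bounded by the number of possible anchors. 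I first dispose of the degenerate cases $k_1=0$ or $k_2=0$: here $T$ carries a single off-diagonal difference, so the edges of $G$ are exactly the pairs $\{x,x+d\}$ with $x$ running over one interval, and $G$ is a disjoint union of paths lying in the residue classes modulo $d$; a short direct computation shows at most three distinct path lengths occur and no cycle arises, so the theorem holds. After this I may assume $k_1,k_2\ge 1$ and invoke Lemma~\ref{lemma:path-1}.

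For part (a), take any cycle $C$ and downshift it repeatedly by Lemma~\ref{lemma:path-1}(c). Each shift is legal while the current vertex set avoids $\{1,1+i_1\}$, and each shift lowers the minimum vertex by $1$, so the process terminates in a cycle $C^\ast$ of the same length as $C$ with $V(C^\ast)\cap\{1,1+i_1\}\neq\emptyset$. As the components of $G$ are vertex-disjoint, at most one of them contains the vertex $1$ and at most one contains the vertex $1+i_1$; hence there are at most two such anchored cycles, so $G$ has at most two distinct cycle lengths. This rules out three cycles of different lengths.

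Part (b) follows the same scheme with the finer obstruction of Lemma~\ref{lemma:path-1}(b). Downshifting a path component $P$ is legal while its vertices avoid $\{1,1+i_1\}$ and its two endpoints avoid the set $S=\{\,n-i_2+i_1+1,\ n-j_2+1,\ n-j_1+1,\ n-(i_1+j_1)+1\,\}$ of \eqref{eq:forbidden}. Pushing $P$ down maximally produces a path component $P^\ast$ of the same length that is anchored, meaning it contains $1$ or $1+i_1$, or has an endpoint in $S$. Now at most one component contains $1$, at most one contains $1+i_1$, and for each of the (at most four) values in $S$ at most one component has it as an endpoint; therefore there are at most $2+4=6$ anchored path components. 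Since every path component shares its length with some anchored one, $G$ has at most six distinct path lengths, which forbids seven paths of different lengths.

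The step I expect to be most delicate is the bookkeeping in part (b): one must verify that the downshift really terminates in an anchored configuration rather than failing for an unlisted reason, which is exactly what Lemma~\ref{lemma:path-1}(b) is designed to guarantee, and that the four values of $S$ collapse correctly when $k_1=1$ or $k_2=1$ (then $i_2$ or $j_2$ is dropped, so $|S|\le 3$ and the bound only improves). The degenerate single-difference case also needs its own short argument, since Lemma~\ref{lemma:path-1} is stated only for $k_1,k_2\ge 1$; beyond that, everything reduces to the clean observation that translations do not change lengths, so only the few ``boundary'' components can carry new lengths.
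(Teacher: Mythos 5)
Your proposal is correct and follows essentially the same route as the paper: repeatedly apply the downshift of Lemma~\ref{lemma:path-1} to anchor each cycle (resp.\ path component) at one of at most two (resp.\ six) special vertices, then use the vertex-disjointness of the components guaranteed by Corollary~\ref{cor:starcomp1} to bound the number of distinct lengths by a pigeonhole count. Your explicit handling of the degenerate case $k_1=0$ or $k_2=0$ is a small point the paper glosses over, but the core argument is identical.
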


\begin{proof}
Since $G$ is triangle-free, $G$ is the row graph of a Toeplitz matrix $T_n \langle i_1,\ldots,i_{k_1}; j_1,\ldots,j_{k_2} \rangle$ satisfying ($\star$) by Theorem~\ref{thm:ind2}.

To show (a) by contradiction, suppose that there are cycles $C_1, C_2, C_3$ of different lengths in $G$.
Then, by Lemma~\ref{lemma:path-1}(c), there exist cycles $C_1^*,C_2^*,C_3^*$ in $G$ such that $C_i^*$ has same length as $C_i$ for each $i =1,2,3$ and each of $C_1^*, C_2^*, C_3^*$ contains a vertex $1$ or $1+j_1$. By the pigeonhole principle, a vertex $1$ or $1+i_1$ is on two of cycles, which is a contradiction as $C_1$, $C_2$, $C_3$ are components of $G$ by Corollary~\ref{cor:starcomp1}. Thus (a) is true.

To prove (b) by contradiction, suppose that there are path components $P_1, P_2, \ldots, P_7$ of different lengths in $G$.
Then there exist path components $P_1^*, P_2^*, \ldots, P_7^*$ in $G$ such that $P_i^*$ has same length as $P_i$ for each $i=1, \ldots, 7$ and each of $P_1^*,P_2^*\ldots, P_7^*$ contains one of the vertices $1, 1+i_1, n-i_2+i_1+1, n-j_2+1, n-j_1+1, n-i_1-j_1+1$ by Lemma~\ref{lemma:path-1}(b) (if $k_1 = 1$ (resp.\ $k_2=1$), ignore $n-j_2+1$ (resp.\ $n-i_2+i_1+1$)).
By the pigeonhole principle, at least one of the vertices $1, 1+i_1, n-i_2+i_1+1, n-j_2+1, n-j_1+1, n-i_1-j_1+1$ is in two path components which is a contradiction (if $k_1 = 1$ (resp.\ $k_2=1$), ignore $n-j_2+1$ (resp.\ $n-i_2+i_1+1$)).
\end{proof}

\begin{corollary}
Any graph containing three cycles of different lengths or seven path components of different lengths is not a Toeplitz-row graph.
\end{corollary}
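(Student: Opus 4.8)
The statement is the contrapositive of Theorem~\ref{thm:forbidden}, with ``three cycles of different lengths'' read as ``three cycle \emph{components} of different lengths'' (this is how cycles enter Theorem~\ref{thm:forbidden}, and it is what makes the assertion correct: a connected Toeplitz-row graph such as ${\rm RG}(T_n\langle 1,2,3;\emptyset\rangle)$ contains cycles of many different lengths as subgraphs). So the plan is to show that no Toeplitz-row graph $G={\rm RG}(T)$, $T=T_n\langle\alpha;\beta\rangle\in{\mathcal T}$, has three cycle components of distinct lengths or seven path components of distinct lengths, splitting on whether $T\in{\mathcal T}_{\le 2}$.

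The case $T\in{\mathcal T}_{\le 2}$ is essentially already handled. By Corollary~\ref{cor:starcomp}, $G$ is then a disjoint union of cycles and paths, so every cycle and every path of $G$ is a whole component. This is the only role triangle-freeness plays in the proof of Theorem~\ref{thm:forbidden}: there it is invoked through Corollary~\ref{cor:starcomp1} solely to guarantee that the cycles and paths are components, a conclusion now supplied directly by Corollary~\ref{cor:starcomp}. The vertex-shifting Lemma~\ref{lemma:path-1} is valid for every $T\in{\mathcal T}_{\le 2}$ with $k_1,k_2\ge 1$, so the pigeonhole argument of Theorem~\ref{thm:forbidden} runs verbatim and forbids both configurations, even when one of the cycle components happens to be a triangle. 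The degenerate subcases $k_1=0$ or $k_2=0$, where one shift vertex disappears, reduce to a union of paths governed by a single difference and are checked directly.

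The genuine content, and the main obstacle, is the case $T\notin{\mathcal T}_{\le 2}$. Here the maximum row sum of $T$ is at least $3$, so by Corollary~\ref{cor:maxinout} some column carries at least three $1$'s; the corresponding rows are pairwise adjacent, giving a clique of size $\ge 3$, and $G$ is no longer a disjoint union of cycles and paths. Thus neither Lemma~\ref{lemma:path-1} nor the pigeonhole argument is available, and one must show outright that such a $G$ cannot have three cycle components, nor seven path components, of distinct lengths. The route I would take exploits the translation invariance of the Toeplitz pattern. Since $\alpha,\beta$ are fixed difference sets, the adjacency differences from Theorem~\ref{thm:edgecond} (namely $i_t-i_s$, $j_q-j_p$, $i_t+j_q$) are the same at every ``bulk'' row, and a short count shows that any row with full support $|\alpha|+|\beta|\ge 3$ has degree at least $3$ in $G$. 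Hence no bulk vertex can lie in a degree-$\le 2$ component: every cycle or path component must be confined to the band of $O(\max\alpha+\max\beta)$ rows near the top and bottom where truncation lowers the degree. The remaining task is to argue that these boundary rows cannot assemble into three cycle components or seven path components of distinct lengths.

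Pinning down the boundary behaviour is where the real work lies. Quantifying the ``uniform density'' (verifying the minimum-degree-$\ge 3$ claim in the bulk for all admissible $\alpha,\beta$, including checking that neighbours arising from different support columns do not collapse the degree below $3$) is routine but must be done with care, and then one must bound the number of distinct lengths of the small components living in the two boundary bands — in particular confirming that the count of distinct cycle-component lengths stays below $3$ and of distinct path-component lengths below $7$. The rigidity of the difference-set structure, which forces the two boundary bands to be translates of fixed local patterns, is the feature I would lean on to finish this last step.
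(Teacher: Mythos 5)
Your reading of the statement is the right one: ``cycles'' has to mean cycle \emph{components}, since the graph of Figure~\ref{fig:Toeplitz1} contains cycles of lengths $3$, $5$ and $6$ as subgraphs and is a Toeplitz-row graph. Your first case also matches what the paper actually does: the corollary is given no proof and is meant as the immediate contrapositive of Theorem~\ref{thm:forbidden}, and you correctly observe that triangle-freeness enters that theorem's proof only through Theorem~\ref{thm:ind2} (to place $T$ in ${\mathcal T}_{\le 2}$) and Corollary~\ref{cor:starcomp1} (to make every cycle and path a component), both of which are available directly from $T\in{\mathcal T}_{\le 2}$ via Corollary~\ref{cor:starcomp}; so the pigeonhole argument with Lemma~\ref{lemma:path-1} runs unchanged even if one component is a triangle. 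Up to there you have, if anything, slightly strengthened the paper's implicit argument.

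The gap is your second case, $T\notin{\mathcal T}_{\le 2}$, which is a program rather than a proof. Two steps are missing. First, the ``bulk degree at least $3$'' claim is asserted, not proved, and the verification is not purely routine: the candidate neighbours $u\pm(i_t-i_s)$, $u\pm(j_q-j_p)$, $u\pm(i_t+j_q)$ coming from Theorem~\ref{thm:edgecond} can coincide, and the side conditions of that theorem must be checked. Second, and more seriously, the ``boundary bands'' you propose to analyse have width governed by $\max\alpha+\max\beta$, which need not be small compared with $n$: for instance with $|\alpha|=2$, $|\beta|=1$ and $i_2+j_1=n-1$ the matrix lies outside ${\mathcal T}_{\le 2}$ yet all but one row is ``boundary,'' so the degree-$\le 2$ components are not confined to a bounded region carrying a fixed local pattern, and the final count of distinct component lengths --- the entire point of the corollary --- is never carried out. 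The paper avoids all of this by (implicitly) reading the corollary inside the triangle-free setting of Theorem~\ref{thm:forbidden}; if one insists, as you do, on the literal unqualified statement, the case $T\notin{\mathcal T}_{\le 2}$ must genuinely be settled, and your proposal does not yet settle it.
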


Now we characterize triangle-free Toeplitz-row graphs without isolated vertices.
 \begin{lemma} \label{lem:short} Any Toeplitz-row graph of order $n \le 4$ has an isolated vertex.
\end{lemma}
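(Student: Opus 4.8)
The plan is to treat this as a short finite case analysis, after first disposing of the degenerate situations. Write $T=T_n\langle\alpha;\beta\rangle$ and record the support of row $i$ as $S_i=\{\,i-i_s : i_s\in\alpha,\ i_s<i\,\}\cup\{\,i+j_p : j_p\in\beta,\ j_p\le n-i\,\}$, so that, by the definition of the row graph, vertex $i$ is isolated precisely when $S_i$ is disjoint from every other $S_{i'}$. The whole argument then amounts to exhibiting one vertex with a ``private'' support in each configuration.

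First I would observe that if $\beta=\emptyset$ then $S_1=\emptyset$, so row $1$ is a zero row and vertex $1$ is isolated; dually, if $\alpha=\emptyset$ then $S_n=\emptyset$ and vertex $n$ is isolated (and if both are empty, every vertex is isolated). Hence I may assume $\alpha\ne\emptyset$ and $\beta\ne\emptyset$. Using the isomorphism ${\rm RG}(T_n\langle\alpha;\beta\rangle)\cong{\rm RG}(T_n\langle\beta;\alpha\rangle)$ noted before Theorem~\ref{thm:edgecond}, I may further assume $|\alpha|\le|\beta|$. Since $\alpha$ and $\beta$ are disjoint subsets of $[n-1]$ and $n\le 4$, we have $|\alpha|+|\beta|\le 3$, leaving only $(|\alpha|,|\beta|)=(1,1)$ (the sole possibility once $n\le 3$) and $(|\alpha|,|\beta|)=(1,2)$, the latter forcing $n=4$ and $\alpha\cup\beta=\{1,2,3\}$.

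In the case $|\alpha|=|\beta|=1$, Theorem~\ref{thm:edgecond} shows that the only admissible edges join vertices differing by $i_1+j_1$, where $i_1+j_1\ge 1+2=3$. For $n=3$ this difference already exceeds $n-1=2$, so $G$ has no edge at all; for $n=4$ only the pair $\{1,4\}$ can be adjacent, leaving vertices $2$ and $3$ isolated. In the case $(1,2)$ with $n=4$ there are just three matrices to inspect, according to whether $\alpha=\{1\}$, $\{2\}$, or $\{3\}$; for each I would list $S_1,\dots,S_4$ from the displayed formula and point to the vertex whose singleton support meets no other support. Concretely, when $\alpha=\{1\}$ the support of vertex $3$ is $\{2\}$; when $\alpha=\{2\}$ vertex $2$ has support $\{3\}$; and when $\alpha=\{3\}$ vertex $4$ has support $\{1\}$. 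In every configuration an isolated vertex appears.

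I expect no genuine obstacle: once the $\alpha$-empty and $\beta$-empty cases are removed, the remaining configurations are few, and the $\alpha\leftrightarrow\beta$ symmetry spares us from treating $(2,1)$ separately from $(1,2)$. The only point requiring care is the bookkeeping of the range conditions $i_s<i$ and $j_p\le n-i$ when writing out the supports, so that no spurious entry is included. A mild alternative to the explicit enumeration would be a counting argument—on four vertices a graph with no isolated vertex needs at least two independent edges, which one could contradict by bounding the column sums of $T$—but the direct inspection is shorter and entirely self-contained.
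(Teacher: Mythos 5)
Your proof is correct and follows essentially the same route as the paper: both arguments reduce the lemma to a short finite check by noting that $\alpha$ and $\beta$ are disjoint subsets of $[n-1]$ (so one of them is empty or $(|\alpha|,|\beta|)\in\{(1,1),(1,2),(2,1)\}$), invoke the ${\rm RG}(T_n\langle\alpha;\beta\rangle)\cong{\rm RG}(T_n\langle\beta;\alpha\rangle)$ symmetry, and then exhibit an isolated vertex in each remaining configuration. The only cosmetic difference is that you enumerate the row supports directly, whereas the paper argues by contradiction and pins down the parameters via Theorem~\ref{thm:edgecond}; the substance is the same.
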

\begin{proof}
Clearly, if $n=2$, then the row graph of a Toeplitz matrix is edgeless.
To the contrary, suppose that $G:={\rm RG}(T)$ has no isolated vertex for some $T=T_n \langle \alpha;\beta \rangle$ with $n \in \{3,4\}$.
Since the vertices $1$ and $n$ have degree at least one, $t_{1,k}=t_{n,\ell}=1$ for some $k, \ell \in [n]$ and so $n-\ell \in \alpha$ and $k-1 \in \beta$.
Thus $k_1 \ge 1$ and $k_2 \ge 1$.
We let $\alpha = \{i_1,\ldots,i_{k_1}\}$ and $\beta = \{j_1,\ldots,j_{k_2}\}$.
Then $k_1+k_2 \le n-1 \le 3$, so $k_1 =1$ or $k_2 = 1$.
We first consider the case $k_1 =1$.
Then $k_2 \le 2$ and no two adjacent vertices in $G$ satisfy  Theorem~\ref{thm:edgecond}\eqref{eq:case1} .
Since $G$ has no isolated vertex, the vertex $n$ is adjacent to some vertex $u$ for some $1\le u < n$.
Obviously, $u$ and $n$ do not satisfy  Theorem~\ref{thm:edgecond}\eqref{eq:case3}, so $n-u=i_1+j_t$ for some $t\in \{1,2\}$.
Since $u<n \le 4$, $n-u \le 3$ and so $i_1+j_t \le 3$.
Since we assumed $i_1 < j_1$ throughout this paper, $(i_1,j_1)=(1,2)$ and if there exists $j_2$, then $j_2=3$.
Then all the entries of $T$ are zero except
$t_{2,1}=\cdots = t_{n,n-1}=1$, $t_{1,3}=\cdots=t_{n,n-2}=1$, and $t_{1,4}=\cdots=t_{n,n-3}=0$ or $1$ depend upon the existence of $j_2$.
Thus $t_{3,j}=0$ except $j=2$.
Yet, $t_{i,2}=0$ except $i=3$, so the vertex $3$ is isolated in $G$ and we reach a contradiction.
Thus $k_1=2$ and $k_2 = 1$.
Then no two adjacent vertices satisfy  Theorem~\ref{thm:edgecond}\eqref{eq:case3}.
Since the vertex $1$ has degree at least one, $1$ and a vertex $v$ are adjacent for some $2 \le v\le n$.
Since $1$ and $v$ do not satisfy  Theorem~\ref{thm:edgecond}\eqref{eq:case1}, $v-1 = i_s+j_1$ for some $s \in \{1,2\}$.
Since we assume $i_1 < j_1$ throughout this paper, $i_s+j_1 \ge 3$, so $n=v=4$ and $i_s+j_1= 3$.
Then $k_1=1$ and $(i_1,j_1)=(1,2)$, which leads to a contradiction as above.
\end{proof}

\begin{proposition} \label{prop:cyclepath}
A path or a cycle of order $n$ is a Toeplitz-row graph if and only if $n \ge 5$. \label{prop:crpc}
\end{proposition}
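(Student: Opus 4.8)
The plan is to prove the two implications separately; the forward direction is immediate, and the reverse direction carries all the work. For the `only if' part, suppose a path on at least two vertices, or a cycle, of order $n$ is a Toeplitz-row graph. Such a graph has minimum degree at least $1$ and hence no isolated vertex, whereas Lemma~\ref{lem:short} asserts that every Toeplitz-row graph of order at most $4$ has an isolated vertex. Therefore $n \ge 5$.

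For the `if' part I would, for each $n \ge 5$, write down explicit matrices in ${\mathcal T}$ whose row graphs are $C_n$ and $P_n$. For the cycle I propose
\[ T = T_n\langle 1,2;\, n-2, n-1\rangle. \]
Here $\alpha = \{1,2\}$ and $\beta = \{n-2,n-1\}$ are disjoint precisely because $n \ge 5$ forces $n-2 \ge 3$, which is the only place the hypothesis $n \ge 5$ enters the construction. One checks $T \in {\mathcal T}_{\le 2}$ via condition $(\star)$ of Theorem~\ref{thm:ind2}, since $i_1 + j_2 = n$ and $i_2 + j_1 = n$. The edges can then be read off from the simplified adjacency criterion used in the proof of Lemma~\ref{lemma:path-1} (itself a specialisation of Theorem~\ref{thm:edgecond}): criterion (i) with $i_2 - i_1 = 1$ produces the edges $\{x,x+1\}$ for $2 \le x \le n-1$, criterion (ii) with $j_2 - j_1 = 1$ produces exactly the edge $\{1,2\}$, and criterion (iii) with $i_1 + j_1 = n-1$ produces exactly the edge $\{1,n\}$. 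These are precisely the edges of the cycle on $1,2,\dots,n$, so ${\rm RG}(T) = C_n$.

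For the path I propose the companion matrix $T = T_n\langle 1,2;\, n-2\rangle$, keeping the same $\alpha$ but dropping to the single superdiagonal index $j_1 = n-2$. Condition $(\star)$ again holds ($i_2 + j_1 = n$, with no constraint imposed by $k_2 = 1$), so $T \in {\mathcal T}_{\le 2}$, and disjointness again needs $n \ge 5$. Now criterion (i) still gives $\{x,x+1\}$ for $2 \le x \le n-1$, criterion (ii) contributes nothing since $|\beta| = 1$, and criterion (iii) with $i_1 + j_1 = n-1$ gives only $\{1,n\}$. Relative to the cycle picture the edge $\{1,2\}$ is now absent, leaving exactly the path with vertex sequence $2,3,\dots,n,1$, so ${\rm RG}(T) = P_n$.

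The step I expect to require the most care is confirming that criterion (iii) contributes no unwanted edges: I would note that under $(\star)$ every sum $i_t + j_q$ other than $i_1 + j_1$ is at least $n$ and so cannot equal any admissible difference $y - x \le n-1$, leaving $i_1 + j_1 = n-1$ as the sole wrap-around value. Combined with the degree bound that $T \in {\mathcal T}_{\le 2}$ guarantees (Corollary~\ref{cor:starcomp}), this pins down the row graphs exactly and completes the reverse direction. The constructions are essentially forced: one wants a spanning path along consecutive integers, supplied by $\alpha$, together with a single long edge $\{1,n\}$, supplied by $i_1 + j_1$, and---only in the cycle case---the closing edge $\{1,2\}$ supplied by $\beta$.
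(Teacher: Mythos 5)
Your proposal is correct and follows essentially the same route as the paper: the `only if' direction via Lemma~\ref{lem:short}, and the `if' direction via the very same witnesses $T_n\langle 1,2;n-2\rangle$ and $T_n\langle 1,2;n-2,n-1\rangle$. The only difference is cosmetic---you compute the full edge set from the adjacency criterion, whereas the paper exhibits enough matrix entries to find a spanning path or cycle and then invokes Corollary~\ref{cor:starcomp1}; if anything, your explicit check that the edge $\{1,2\}$ is absent in the path case is slightly more careful.
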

\begin{proof}  The `only if' part immediately follows from Lemma~\ref{lem:short}.

To prove the `if' part, suppose $n \ge 5$.
 Let $T_1 = T_n \langle 1,2 ; n-2 \rangle$ (see Figure~\ref{fig:cycle} for an illustration).
Then, for each $i=2,\ldots, n-1$, the $(i,i-1)$-entry and the $(i+1,i-1)$-entry of $T_1$ are $1$, so the vertices $i$ and $i+1$ are adjacent in ${\rm RG}(T_1)$.
Moreover, the $(1,n-1)$-entry and $(n,n-1)$-entry are $1$, so the vertices $1$ and $n$ are adjacent.
Thus ${\rm RG}(T_1)$ contains a path of order $n$ and so, by Corollary~\ref{cor:starcomp1}, ${\rm RG}(T_1)$ is a path of order $n$.

To show a cycle of length $n$ is a Toeplitz-row graph, consider $T_2 = T_n \langle 1,2 ; n-2,n-1 \rangle$ (see Figure~\ref{fig:cycle}).
Then, by the above argument, $2,3,\ldots, n, 1$ is the sequence of a path in ${\rm RG}(T_2)$.
Since the $(1,n)$-entry and $(2,n)$-entry are $1$, the vertices $1$ and $2$ are adjacent.
Thus ${\rm RG}(T_2)$ contains a cycle of length $n$
and so, by Corollary~\ref{cor:starcomp1}, ${\rm RG}(T_2)$ is a cycle of length $n$.
\end{proof}

\begin{figure}
\begin{center}
\begin{minipage}[c]{.4\textwidth}
\begin{center}
\begin{align*}
T_1= \left(\begin{array}{cccccc}
0 & 0 & 0 & 0 & 1 & 0 \\
1 & 0 & 0 & 0 & 0 & 1 \\
1 & 1 & 0 & 0 & 0 & 0 \\
0 & 1 & 1 & 0 & 0 & 0 \\
0 & 0 & 1 & 1 & 0 & 0 \\
0 & 0 & 0 & 1 & 1 & 0
\end{array}\right)\end{align*}
\end{center}
\end{minipage}
\begin{minipage}[c]{.4\textwidth}
\begin{center}
\begin{align*}
T_2= \left(\begin{array}{cccccc}
0 & 0 & 0 & 0 & 1 & 1 \\
1 & 0 & 0 & 0 & 0 & 1 \\
1 & 1 & 0 & 0 & 0 & 0 \\
0 & 1 & 1 & 0 & 0 & 0 \\
0 & 0 & 1 & 1 & 0 & 0 \\
0 & 0 & 0 & 1 & 1 & 0
\end{array}\right)\end{align*}
\end{center}
\end{minipage}
\end{center}
\caption{Toeplitz matrices $T_1=T_6\langle 1,2;4 \rangle$ and $T_2=T_6 \langle 1,2 ; 4,5 \rangle$}\label{fig:cycle}

\end{figure}

Now we are ready to give a characterization of connected triangle-free Toeplitz-row graphs.

\begin{theorem}\label{thm:connected triangle-free}
A Toeplitz-row graph of order $n$ is connected and triangle-free if and only if $n \ge 5$ and it is a path or a cycle of order $n$. 	
\end{theorem}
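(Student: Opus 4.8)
The plan is to assemble the statement from three results already in hand: Corollary~\ref{cor:starcomp1}, which guarantees that every triangle-free Toeplitz-row graph is a disjoint union of paths and cycles; Lemma~\ref{lem:short}, which forces an isolated vertex whenever the order is at most $4$; and Proposition~\ref{prop:cyclepath}, which realizes a path or a cycle of any order $n \ge 5$ as a Toeplitz-row graph. Almost all the substantive work has been done in those statements, so the proof is essentially a matter of combining them correctly and attending to the low-order boundary.

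For the `if' direction I would simply note that if $n \ge 5$ and $G$ is a path or a cycle of order $n$, then $G$ is a Toeplitz-row graph by Proposition~\ref{prop:cyclepath}. Such a $G$ is clearly connected, and it is triangle-free because a path is acyclic while a cycle of order $n \ge 5$ has girth $n > 3$. Thus this direction follows at once.

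For the `only if' direction, suppose $G$ is a connected triangle-free Toeplitz-row graph of order $n$. First I would apply Corollary~\ref{cor:starcomp1} to write $G$ as a disjoint union of paths and cycles; since $G$ is connected, it is then a single path or a single cycle, so $G$ is already of the required shape and only the bound $n \ge 5$ remains. Here I would use Lemma~\ref{lem:short} in contrapositive form: a connected graph on two or more vertices has no isolated vertex, whereas every Toeplitz-row graph of order at most $4$ does have one. Consequently $n \le 4$ is impossible once $G$ has an edge, forcing $n \ge 5$ and completing the characterization.

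The argument is a bookkeeping combination of earlier lemmas rather than a new computation, so the one point I would be most careful about is the degenerate end: one must explicitly invoke that a connected graph with at least one edge has no isolated vertex before applying Lemma~\ref{lem:short}, and one must set aside the trivial single-vertex graph, which is vacuously connected and triangle-free yet has order $1 < 5$. I therefore expect the \emph{only} real obstacle to be stating the connectedness-to-no-isolated-vertex step cleanly (tacitly assuming $G$ is nontrivial, i.e.\ has an edge), after which everything reduces to quoting the prior results.
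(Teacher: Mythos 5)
Your proposal is correct and follows essentially the same route as the paper: the `if' direction from Proposition~\ref{prop:cyclepath}, and the `only if' direction from Corollary~\ref{cor:starcomp1} together with Lemma~\ref{lem:short}. You are in fact slightly more explicit than the paper, which does not name Corollary~\ref{cor:starcomp1} in its two-line proof and does not mention the degenerate single-vertex case that you correctly flag.
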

\begin{proof}
The `if' part is valid by the `if' part of Proposition~\ref{prop:cyclepath}.
The `only if' part is true by Lemma~\ref{lem:short} and the `only if' of Proposition~\ref{prop:cyclepath}.  	
\end{proof}

 \section{Toeplitz matrices in ${\mathcal T}_{\le 2}$ whose row graph contains a cycle}
\label{sec:rowsum2}
In the previous section, we showed that
\[\{G \mid G={\rm RG}(T) \text{ for some $T \in {\mathcal T}$ and } G \text{ is triangle-free}\} \subseteq {\mathcal G}_{\le 2}.\]
In this section, we study
\[{\mathcal T_{\le 2}^{\star}}:=\{T \in {\mathcal T_{\le 2}} \mid {\rm RG}(T) \text{ contains a cycle}\}.\]
From Corollary~\ref{cor:starcomp}, we know that the row graph of a Toeplitz matrix $T \in {\mathcal T}_{\le 2}$ is a disjoint union of paths and cycles.
Therefore $T_{\le 2} \setminus {\mathcal T_{\le 2}^{\star}}$ is a set of Toeplitz matrix with maximum row sum at most $2$ whose row graph is a disjoint union of path components.

We first present a specialized version of Theorem~\ref{thm:edgecond} for a matrix in ${\mathcal T}_{\le 2}$.

\begin{lemma}\label{lem:smallend}
Let $T=T_n \langle i_1, i_2 ; j_1, i_2 \rangle$ be a Toeplitz matrix in ${\mathcal T}_{\le 2}$.
Then two vertices $u$ and $v$ with $u<v$ are adjacent in the row graph of $T$ if and only if one of the following is true:
\begin{itemize}
\item[{\rm (i)}] $1 \le u \le n-j_2$ and $v=u+j_2-j_1$;
\item[{\rm (ii)}] $i_1+1 \le u  \le n-i_2+i_1$ and $v=u+i_2-i_1$;
\item[{\rm (iii)}] $1\le u \le n-j_1-i_1$ and $v=u+i_1+j_1$.
    \end{itemize}
\end{lemma}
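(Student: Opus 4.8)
The plan is to obtain the statement as a direct specialization of Theorem~\ref{thm:edgecond} to the case $k_1=k_2=2$, using the membership $T\in{\mathcal T}_{\le 2}$ (equivalently property $(\star)$ of Theorem~\ref{thm:ind2}) to discard the superfluous alternatives. I would write $\alpha=\{i_1,i_2\}$ and $\beta=\{j_1,j_2\}$ with $i_1<i_2$ and $j_1<j_2$, and apply Theorem~\ref{thm:edgecond} with $(u,v)$ playing the role of $(i,j)$. Since $T\in{\mathcal T}_{\le 2}$ with $k_1=k_2=2$, Theorem~\ref{thm:ind2} supplies $i_1+j_2\ge n$ and $i_2+j_1\ge n$; and because $i_1<i_2$ one also gets $i_2+j_2>i_1+j_2\ge n$. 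These three inequalities are the only extra inputs needed beyond Theorem~\ref{thm:edgecond}.

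For the $\alpha$-difference alternative \eqref{eq:case1}, the requirement $i_s\in\alpha\cap[u-1]$, $i_t\in\alpha\cap[v-1]$ with $s<t$ forces $(i_s,i_t)=(i_1,i_2)$, so $v-u=i_2-i_1$. I would then check that the two membership conditions collapse to a single constraint: $i_1\le u-1$ gives $u\ge i_1+1$, while $i_2\le v-1$, after substituting $v=u+i_2-i_1$, yields the identical inequality $u\ge i_1+1$; the only remaining requirement is $v\le n$, i.e.\ $u\le n-i_2+i_1$. This is exactly alternative (ii). The $\beta$-difference alternative \eqref{eq:case3} is handled symmetrically: $(j_p,j_q)=(j_1,j_2)$, so $v-u=j_2-j_1$, the conditions $j_2\le n-u$ and $j_1\le n-v$ both reduce to $u\le n-j_2$, and $u\ge 1$ is automatic, giving alternative (i).

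The main step is the sum alternative \eqref{eq:case2}, where $v-u=i_t+j_q$ with $i_t\in\alpha$ and $j_q\in\beta$ a priori allows the four values $i_1+j_1$, $i_1+j_2$, $i_2+j_1$, $i_2+j_2$. Here I would use $v-u\le n-1$ together with the three inequalities above: since $i_1+j_2$, $i_2+j_1$, and $i_2+j_2$ are all $\ge n>v-u$, none of them can equal $v-u$, leaving $v-u=i_1+j_1$ as the only possibility. Combined with $1\le u$ and $v=u+i_1+j_1\le n$, this is alternative (iii). Conversely, each of (i)--(iii) supplies an admissible witness triple in the corresponding case of Theorem~\ref{thm:edgecond}, so the equivalence follows in both directions. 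The only delicate point is the bookkeeping verifying that the paired membership constraints in the two difference cases are genuinely equivalent, so that no hidden second bound on $u$ survives; everything else is routine substitution.
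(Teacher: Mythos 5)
Your proposal is correct and follows essentially the same route as the paper: specialize Theorem~\ref{thm:edgecond} to $k_1=k_2=2$, use the inequalities $i_1+j_2\ge n$ and $i_2+j_1\ge n$ from Theorem~\ref{thm:ind2} to eliminate the sums $i_1+j_2$, $i_2+j_1$, $i_2+j_2$ (all $\ge n > v-u$), and reduce each remaining case to the stated bounds on $u$. Your extra bookkeeping confirming that the paired membership constraints in the two difference cases coincide is a detail the paper leaves implicit, but the argument is the same.
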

\begin{proof}
Suppose that $u$ and $v$ are adjacent.
Then, by Theorem~\ref{thm:edgecond}, $v-u \in \{j_2-j_1, i_2+i_1, i_1+j_1, i_1+j_2, i_2+j_1, i_2+j_2\}$.
If $v=u+j_2-j_1$, then $1 \le u \le n-j_2$ by \eqref{eq:case3} of the same theorem.
Suppose $v=u+i_2-i_1$.
Then $i_1 \le u-1$ by \eqref{eq:case1}, so $i_1+1 \le u$.
Moreover, $u+i_2-i_1 \le n$ and so $u \le n-i_2+i_1$.
If $v=u+i_1+j_1$, then $u+i_1+j_1 = v  \le n$ and so $1 \le u \le n-j_1-i_1$.
The converse is obviously true by Theorem~\ref{thm:edgecond}.
\end{proof}

\subsection{Toeplitz matrices in ${\mathcal T}_{\le 2}^{\star}$}

We first give a sufficient condition for a Toeplitz matrix belonging to $T_{\le 2} \setminus {\mathcal T_{\le 2}^{\star}}$.
A matrix is called {\em acyclic} if its digraph does not contain a directed cycle.
Using the following lemma, we show that $T \in T_{\le 2} \setminus {\mathcal T_{\le 2}^{\star}}$ if $T$ is acyclic.

 \begin{lemma}[\cite{jkkc}]\label{lem:cycle}  Let $T=T_n\langle i_1,\ldots,i_{k_1}; j_1,\ldots,j_{k_2}\rangle$. Then the digraph of $T$ contains a directed cycle $C_{s,t}$ of length $\frac{i_s+j_t}{\gcd(i_s,j_t)}$ for each $s \in [k_1]$ and $t \in [k_2]$ such that $i_s + j_t \le n$.\end{lemma}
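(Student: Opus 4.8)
The plan is to exhibit an explicit directed cycle. First I would record how the arcs of the digraph of $T$ look. By the decomposition \eqref{eq:toedecom}, the $(p,q)$-entry of $T$ equals $1$ precisely when $p-q=i_s$ for some $s$ (the contribution of $L^{i_s}$) or $q-p=j_t$ for some $t$ (the contribution of $U^{j_t}$). Hence the digraph of $T$ has an arc $p\to p-i_s$ whenever $p-i_s\ge 1$, and an arc $p\to p+j_t$ whenever $p+j_t\le n$. A closed walk built from $b$ arcs of the first kind (fixing $s$) and $a$ arcs of the second kind (fixing $t$) returns to its start exactly when $a\,j_t=b\,i_s$; the least positive solution is $a=i_s/d$, $b=j_t/d$ with $d=\gcd(i_s,j_t)$, giving total length $a+b=(i_s+j_t)/d$, which is the claimed value.

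Next I would turn this counting observation into a genuine cycle with distinct vertices, all lying in $[n]$, by writing the vertices down explicitly. Set $L=(i_s+j_t)/d$ and define
\[
v_k=1+\bigl(k\,j_t \bmod (i_s+j_t)\bigr),\qquad k=0,1,\ldots,L-1.
\]
Since $\gcd(j_t,i_s+j_t)=\gcd(j_t,i_s)=d$, the residues $k\,j_t \bmod (i_s+j_t)$ run over the $L$ distinct multiples $0,d,2d,\ldots,(L-1)d$ as $k$ ranges over $0,\ldots,L-1$. Consequently the $v_k$ are pairwise distinct and all lie in $\{1,\ldots,i_s+j_t\}\subseteq[n]$, using the hypothesis $i_s+j_t\le n$.

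Then I would check the two things that make $v_0 v_1\cdots v_{L-1}v_0$ a directed cycle. Each difference $v_{k+1}-v_k$ equals $+j_t$ when no wraparound occurs and equals $j_t-(i_s+j_t)=-i_s$ when it does. In the first case $v_k\to v_{k+1}$ is a $j_t$-arc whose target is at most $i_s+j_t\le n$; in the second it is an $i_s$-arc whose target is at least $1$. So every consecutive pair is indeed an arc of the digraph. Finally $L\,j_t$ is a multiple of $i_s+j_t$, whence $v_L=v_0=1$ and the walk closes up, producing the desired cycle $C_{s,t}$ of length $(i_s+j_t)/\gcd(i_s,j_t)$.

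I do not expect a genuine obstacle here: the only points needing care are the distinctness of the $v_k$, which is exactly the computation $\gcd(j_t,i_s+j_t)=d$, and the verification that each step stays within $[1,n]$, which is immediate from $i_s+j_t\le n$. Since the statement is attributed to \cite{jkkc}, I would present this as a short self-contained verification rather than reproving anything deeper.
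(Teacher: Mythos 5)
The paper does not prove this lemma at all: it is imported verbatim from \cite{jkkc} and used as a black box, so there is no in-paper argument to compare yours against. Your self-contained verification is correct. The arc description ($p\to p-i_s$ when $p-i_s\ge 1$, $p\to p+j_t$ when $p+j_t\le n$) follows from the decomposition \eqref{eq:toedecom}; the vertices $v_k=1+\bigl(k\,j_t \bmod (i_s+j_t)\bigr)$ are pairwise distinct because $j_t$ has order $L=(i_s+j_t)/d$ in $\mathbb{Z}/(i_s+j_t)\mathbb{Z}$, and they all lie in $\{1,\ldots,i_s+j_t\}\subseteq[n]$ by the hypothesis $i_s+j_t\le n$; each consecutive difference is $+j_t$ (target $\le i_s+j_t\le n$) or $-i_s$ (target $\ge 1$), so every step is a genuine arc; and $L\,j_t\equiv 0 \pmod{i_s+j_t}$ closes the walk. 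One small presentational point: the opening ``closed walk with $a$ forward and $b$ backward arcs'' paragraph only motivates the length count and is logically subsumed by the explicit construction, so you could drop it; everything that actually needs proving is carried by the explicit $v_k$.
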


\begin{proposition} \label{prop:acylcic}
Let $T$ be an acyclic Toeplitz matrix in ${\mathcal T}_{\le 2}$. Then the row graph of $T$ consists of only path components.
\end{proposition}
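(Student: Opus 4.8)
The plan is to argue by contradiction after reducing to the right form. Since $T \in {\mathcal T}_{\le 2}$, Corollary~\ref{cor:starcomp} already tells us that ${\rm RG}(T)$ is a disjoint union of paths and cycles, so ``consists of only path components'' is equivalent to ``has no cycle''. I would therefore suppose that ${\rm RG}(T)$ contains a cycle $C$ and extract a contradiction from the acyclicity of $T$. Throughout, write $\alpha=\{i_1,\ldots,i_{k_1}\}$ and $\beta=\{j_1,\ldots,j_{k_2}\}$ with $k_1,k_2\le 2$.

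The first step is to use acyclicity to kill the ``mixed'' edges. By Lemma~\ref{lem:cycle}, if $i_s+j_t\le n$ for some $s\in[k_1]$ and $t\in[k_2]$, then the digraph of $T$ contains a directed cycle; hence acyclicity forces $i_s+j_t>n$ for every such pair. Consequently the equality \eqref{eq:case2} of Theorem~\ref{thm:edgecond}, namely $j-i=i_t+j_q$, can never hold, because its right-hand side exceeds $n$ while $j-i\le n-1$. Thus every edge of ${\rm RG}(T)$ arises from \eqref{eq:case1} or \eqref{eq:case3}, i.e.\ is either an ``$\alpha$-edge'' joining two indices whose difference lies in $\alpha$, or a ``$\beta$-edge'' joining two indices whose difference lies in $\beta$.

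The second step pins down where both edge types must occur. Since $k_1,k_2\le 2$, the $\alpha$-edges realize only the single difference $i_2-i_1$ and the $\beta$-edges only $j_2-j_1$; and a graph all of whose edges share one common difference is a disjoint union of paths (steps of a fixed size in one direction never return). Hence the cycle $C$ must use both an $\alpha$-edge and a $\beta$-edge, which forces $k_1=k_2=2$. I would then look at the largest vertex $M$ of $C$: its two cycle-neighbors are both smaller than $M$ and cannot be reached by the same difference (otherwise they would coincide), so one neighbor is $M-(i_2-i_1)$ via \eqref{eq:case1} and the other is $M-(j_2-j_1)$ via \eqref{eq:case3}.

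The decisive estimate is then just reading off the side conditions of Theorem~\ref{thm:edgecond} for these two edges. From the $\alpha$-edge we need $i_2\in\alpha\cap[M-1]$, giving $M\ge i_2+1$; from the $\beta$-edge we need $j_1\in\beta\cap[n-M]$, giving $M\le n-j_1$. Combining, $i_2+j_1\le n-1<n$, contradicting the inequality $i_2+j_1\ge n$ guaranteed by property $(\star)$ of Theorem~\ref{thm:ind2} (valid since $k_1=2$). This contradiction shows ${\rm RG}(T)$ has no cycle. I expect the crux to be exactly this second/third step: recognizing that a cycle forces \emph{both} difference types and that its extremal vertex must simultaneously be the upper endpoint of an $\alpha$-edge and a $\beta$-edge, so that the range restrictions of Theorem~\ref{thm:edgecond} collide precisely with $(\star)$. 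The remaining configurations---one of $\alpha,\beta$ empty or a singleton---fold into the ``single common difference'' observation, since then only one difference type is available and no cycle can form.
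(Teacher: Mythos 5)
Your proof is correct, and its opening moves coincide with the paper's: both invoke Lemma~\ref{lem:cycle} to conclude that acyclicity forces $i_s+j_t>n$ for all pairs, hence no edge of type \eqref{eq:case2} exists, and both observe that a graph whose edges all realize a single fixed difference is a disjoint union of paths (the local-maximum/coinciding-neighbors argument), which disposes of every case except $|\alpha|=|\beta|=2$. Where you diverge is in that last case. The paper shows that the $\alpha$-edges and the $\beta$-edges live on \emph{disjoint vertex sets}: the ends of the $\alpha$-edges all lie in $\{i_1+1,\ldots,n\}$ and the ends of the $\beta$-edges all lie in $\{1,\ldots,n-j_1\}$, and these intervals are disjoint precisely because acyclicity gives $i_1+j_1>n$; so any cycle would sit entirely inside one single-difference subgraph, reducing to the previous case. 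You instead take the maximum vertex $M$ of a putative cycle, note its two (distinct, smaller) neighbors must be reached by the two different differences, and read off the range conditions of Theorem~\ref{thm:edgecond}\eqref{eq:case1} and \eqref{eq:case3} to get $i_2+1\le M\le n-j_1$, colliding with $i_2+j_1\ge n$ from property $(\star)$ of Theorem~\ref{thm:ind2}. Both arguments are sound; the paper's buys a structural statement (the two edge classes are vertex-disjoint, so the whole graph is visibly a path union), while yours is a purely local extremal argument that leans on the ${\mathcal T}_{\le 2}$ hypothesis through $(\star)$ rather than on acyclicity a second time. Either is acceptable; no gap.
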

\begin{proof}
Let $G={\rm RG}(T)$.
By Theorem~\ref{thm:ind2}, $T=T_{n}\langle \alpha; \beta \rangle$ with $|\alpha| \le 2$ and $|\beta| \le 2$.
Moreover, since $T$ is acyclic, $i_1+j_1 >n$ by Lemma~\ref{lem:cycle} and so Lemma~\ref{lem:smallend}(iii)  cannot happen.
If $|\alpha|=|\beta|=1$, then none of (i) and (ii) of Lemma~\ref{lem:smallend} can happen and so $G$ is an empty graph.
Suppose that $|\alpha| =2$ or $|\beta|=2$.
Then, since (iii) of Lemma~\ref{lem:smallend} cannot happen, the edge set of $G$ equals $E_1$ if $|\alpha|=2$ and $|\beta|=1$; $E_2$ if $|\alpha|=1$ and $|\beta|=2$; $E_1 \cup E_2$ if $|\alpha|=|\beta|=2$ where\[
E_1 = \{\{i, i-i_1+i_2\} \mid 1 \le i \le n-i_2\} \mbox{ and } E_2 = \{\{j,j-j_1+j_2\}\mid 1+j_1 \le j \le n+j_1-j_2\}.\]
Suppose, to the contrary, that the subgraph $G[E_1]$ of $G$ induced by $E_1$ contains a cycle $C:=v_0v_1\ldots v_{\ell-1}v_0$ (we identify $v_{\ell}$ with $v_0$) for some integer $\ell \ge 3$.
Then  $|v_{i+1}-v_{i}| = i_2-i_1$ for all $i \in \{0,1,\ldots,\ell-1\}$.
Since $C$ is closed, there is $i \in \{0,1,\ldots,\ell-1\}$ such that $v_{i-1} < v_i$ and $v_i > v_{i+1}$. Then \[v_i-v_{i+1} = | v_{i+1}-v_i| = |v_i-v_{i-1}| = v_i - v_{i-1},\]
which implies $v_{i-1} = v_{i+1}$. Since $\ell \ge 3$, we have reached a contradiction.
By a similar argument, we may show that $G[E_2]$ does not contain a cycle.
Therefore the statement is true if either $|\alpha|=2$ and $|\beta|=1$ or $|\alpha|=1$ and $|\beta|=2$.

Suppose that $|\alpha|=|\beta|=2$.
Since $i_2+j_1 > i_1+j_1> n$, $E_1$ and $E_2$ are disjoint.
Moreover, the largest $n-i_1$ among the ends of the edges  in $E_1$ is less than the smallest $j_1$ among the ends of the edges in $E_2$.
Thus, if a cycle is contained in $G$, then it must be contained in $G[E_1]$ or in $G[E_2]$.
Since we have shown that $G[E_1]$ and $G[E_2]$ are disjoint unions of paths, the statement is valid.
\end{proof}

Now, it is interesting to find out which matrix in ${\mathcal T_{\le 2}^{\star}}$ has a its row graph containing a cycle component.
First of all, we will claim that for any integers $4 \le m \le n$ except a few cases, there exists a Toeplitz matrix of order $n$ in ${\mathcal T_{\le 2}^{\star}}$ whose row graph contains a cycle component of length $m$.

\begin{lemma}\label{lem:klcycle}
Let $T= T_n \langle i_1, i_2 ; j_1, j_2 \rangle$.
If there exists integers $1\le k \le \frac{n-1-i_1}{i_2-i_1}$ and $1 \le \ell \le \frac{n-1-j_1}{j_2-j_1}$ such that \[ i_1+j_1 = k(i_2-i_1)+ \ell(j_2-j_1),\]
then the row graph of $T$ contains a cycle of length $k+\ell+1$.
\end{lemma}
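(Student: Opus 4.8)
The plan is to exhibit an explicit cycle. Write $a = i_2 - i_1$ and $b = j_2 - j_1$ for the two ``short'' step lengths; both are positive since $i_1 < i_2$ and $j_1 < j_2$ (the elements of $\alpha$ and $\beta$ are listed in increasing order). By Theorem~\ref{thm:edgecond}, two vertices at distance $b$ are adjacent via an edge of type~\eqref{eq:case3} provided the larger one is at most $n - j_1$; two vertices at distance $a$ are adjacent via an edge of type~\eqref{eq:case1} provided the smaller one is at least $i_1 + 1$; and two vertices at distance $i_1 + j_1$ are adjacent via an edge of type~\eqref{eq:case2}, with no further restriction beyond both endpoints lying in $[n]$ (the witnessing column $i + j_1$ automatically lies strictly between them). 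The hypothesis $i_1 + j_1 = k(i_2-i_1) + \ell(j_2-j_1) = ka + \ell b$ is exactly what allows $k$ steps of length $a$ together with $\ell$ steps of length $b$ to span the same distance as a single type-\eqref{eq:case2} edge, so an ascending run of short steps followed by one long edge can close up into a cycle on $k + \ell + 1$ vertices.

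First I would fix the order of the short steps so that the position constraints are met. Because a $b$-step is legal only when its upper endpoint does not exceed $n - j_1$ (favoring low positions) while an $a$-step is legal only when its lower endpoint is at least $i_1 + 1$ (favoring high positions), I would perform all $\ell$ of the $b$-steps at the bottom and all $k$ of the $a$-steps at the top. Thus, for a starting vertex $x_0$, I would take the strictly increasing sequence
\[
x_0,\ x_0 + b,\ \ldots,\ x_0 + \ell b,\ x_0 + \ell b + a,\ \ldots,\ x_0 + \ell b + k a = x_0 + (i_1 + j_1)
\]
of $k + \ell + 1$ distinct vertices, and close the cycle with the type-\eqref{eq:case2} edge joining $x_0$ and $x_0 + (i_1 + j_1)$. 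Writing $y = x_0 + \ell b$ for the vertex where the two runs meet, the binding feasibility conditions all concentrate at $y$: every $b$-step has upper endpoint at most $y$, so these are legal once $y \le n - j_1$; every $a$-step has lower endpoint at least $y$, so these are legal once $y \ge i_1 + 1$; and the long edge is legal once $x_0 \ge 1$ and $x_0 + (i_1 + j_1) \le n$ (the latter also bounds all $a$-step upper endpoints by $n$).

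It then remains to choose $x_0$ meeting all of these, i.e.\ to produce a value $y$ with
\[
\max(i_1 + 1,\ \ell b + 1) \le y \le \min(n - j_1,\ n - ka),
\]
where $\ell b + 1 \le y$ records $x_0 \ge 1$ and $y \le n - ka$ records $x_0 + (i_1+j_1) \le n$ (using $i_1 + j_1 = ka + \ell b$). The crux — and the step I expect to be the only real obstacle — is showing this interval is nonempty. Three of the four resulting inequalities are immediate: $i_1 + 1 \le n - ka$ and $\ell b + 1 \le n - j_1$ are precisely the hypotheses $ka \le n - 1 - i_1$ and $\ell b \le n - 1 - j_1$, while $i_1 + 1 \le n - j_1$ is equivalent to the fourth inequality. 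The fourth, $\ell b + 1 \le n - ka$, amounts to $i_1 + j_1 \le n - 1$, which I would obtain by \emph{adding} the two hypotheses: $i_1 + j_1 = ka + \ell b \le 2n - 2 - (i_1 + j_1)$, hence $i_1 + j_1 \le n - 1$.

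Choosing, say, $y = \max(i_1 + 1,\ \ell b + 1)$, equivalently $x_0 = \max(1,\ i_1 + 1 - \ell b)$, then makes the displayed sequence a legal closed walk whose interior vertices are distinct, that is, a cycle of length $k + \ell + 1$ in ${\rm RG}(T)$, completing the proof. The whole argument is thus a short feasibility check; the one genuinely load-bearing observation is that summing the two upper bounds on $k$ and $\ell$ forces $i_1 + j_1 \le n-1$, guaranteeing that the short run and its closing long edge fit inside $[n]$.
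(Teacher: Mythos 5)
Your proof is correct and takes essentially the same route as the paper's: the same strictly increasing sequence of $\ell$ steps of length $j_2-j_1$ followed by $k$ steps of length $i_2-i_1$, closed by the edge of length $i_1+j_1$, with the same load-bearing observation that adding the two hypotheses forces $i_1+j_1\le n-1$ and hence a feasible starting vertex (your interval for $x_0$ coincides with the paper's interval for its parameter $p$). No issues.
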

\begin{proof}
Let $s=i_2-i_1$ and $t=j_2-j_1$ and suppose that there exists integers $1\le k \le \frac{n-1-i_1}{s}$ and $1 \le \ell \le \frac{n-1-j_1}{t}$.
Then $i_1+ks \le n-1$ and $j_1+\ell t \le n-1$.
By adding them, we have $i_1+j_1+ks+\ell t \le 2(n-1)$.
From this and the equality $i_1+j_1 = ks+ \ell t$, we obtain the inequality
\begin{equation}\label{eq:ks+lt}
i_1+j_1=ks+\ell t \le n-1,	
\end{equation}
 i.e. $1+ks \le n- \ell t$.
Therefore $\{1+ks, 2+ks, \ldots, n-\ell t\} \neq \emptyset$ and so $\{1+ks - j_1, 2+ks-j_1, \ldots, n-\ell t - j_1\} \neq \emptyset$.
Since $i_1+ks \le n-1$,
\[(n-(i_1+j_1))-(1+ks-j_1)=n-1-(i_1+ks) \ge 0\]
and so $1+ks-j_1 \le n-(i_1+j_1)$.
Since $n-\ell t -j_1 \ge 1$, there is an integer $p \in \{1,2,\ldots, n-(i_1+j_1)\} \cap \{1+ks - j_1, 2+ks-j_1, \ldots, n-\ell t - j_1\}$.
Then
\begin{equation}\label{eq:set1}
1 \le p \le n-(i_1+j_1) 	
 \end{equation}
and
\begin{equation}\label{eq:set2}
1+ks-j_1 \le p \le n-\ell t -j_1	
 \end{equation}

Let $W=\{p,p+t, \ldots, p+\ell t, p+\ell t+s, \ldots, p+\ell t + ks\}$.
Then, by \eqref{eq:ks+lt} and \eqref{eq:set1},
\[1 \le p <p+t < \cdots <p+\ell t+s < \cdots < p+\ell t+ks=p+(i_1+j_1) < n.\]
Therefore $W \subset [n]$.
Now, by \eqref{eq:set2},
\[(p+qt)+j_1 \le p+\ell t+j_1 \le n\]
for each $q \in [\ell]$.
Thus $p+(q-1)t$ and $p+qt$ are adjacent for $q \in [\ell]$ by Lemma~\ref{lem:smallend}(i).
On the other hand, by \eqref{eq:set2},
\[(p+\ell t + (q-1)s)-i_1 \ge (1+ks-j_1)+\ell t-i_1=1\]
for each $q \in [k]$.
Thus, by Lemma~\ref{lem:smallend}(ii), $p+\ell t+(q-1)s$ and $p+\ell t + qs$ are adjacent for each $q \in [k]$.
Finally, $(p+ks+\ell t) -p = i_1+j_1$, so $p$ and $p+ks+\ell t$ are adjacent in  ${\rm RG}(T)$ by the same lemma.
Hence the subgraph of ${\rm RG}(T)$ induced by $W$ is a cycle of length $k+\ell+1$.
\end{proof}

\begin{theorem} For every integers $m$ and $n$ such that $4 \le m \le n$ and  $(m,n)\notin \{(4,4),(4,5),(6,8)\}$, there exists a Toeplitz matrix of order $n$ in ${\mathcal T_{\le 2}}$ whose row graph contains a cycle component of length $m$.\label{thm:cyclemn}
\end{theorem}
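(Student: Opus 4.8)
The plan is to produce, for each admissible pair $(m,n)$, a matrix $T=T_n\langle i_1,i_2;j_1,j_2\rangle\in{\mathcal T}_{\le2}$ whose row graph contains a cycle of length $m$; this is already enough. Indeed, once $T\in{\mathcal T}_{\le2}$, Corollary~\ref{cor:starcomp} says that ${\rm RG}(T)$ is a disjoint union of paths and cycles, so every vertex lying on a cycle subgraph already has both of its neighbours on that subgraph, forcing the cycle to be an entire component. Thus ``contains a cycle of length $m$'' upgrades for free to ``has a cycle component of length $m$''.

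The main engine is Lemma~\ref{lem:klcycle}. First I would search for integers $k,\ell,s,t\ge1$ with $k+\ell=m-1$ and $ks+\ell t=n-1$, and then set $\alpha=\{\ell t,\ \ell t+s\}$ and $\beta=\{ks,\ ks+t\}$, so that $i_2-i_1=s$, $j_2-j_1=t$, and $i_1+j_1=\ell t+ks=n-1$. With these choices the two upper bounds in Lemma~\ref{lem:klcycle} hold with equality, so ${\rm RG}(T)$ contains a cycle of length $k+\ell+1=m$; moreover $i_1+j_2=(n-1)+t\ge n$ and $i_2+j_1=(n-1)+s\ge n$, so $T\in{\mathcal T}_{\le2}$ by Theorem~\ref{thm:ind2}. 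The only remaining requirement is that $\alpha$ and $\beta$ be disjoint (otherwise $T$ is not even a valid member of ${\mathcal T}$), so the whole problem collapses to finding a splitting of $n-1$ that keeps $\alpha\cap\beta=\emptyset$.

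The first splitting to try is $k=1$, $\ell=m-2$, $s=n-m+1$, $t=1$, valid whenever $n\ge m$; it gives $\alpha=\{m-2,\ n-1\}$ and $\beta=\{n-m+1,\ n-m+2\}$, which are disjoint unless $2m=n+3$ or $2m=n+4$. On the line $2m=n+3$ I would instead take $(k,\ell,s,t)=(2,m-3,1,2)$, and on the line $2m=n+4$ take $(3,m-4,1,2)$; both again solve $ks+\ell t=n-1$, and a direct check shows that they keep $\alpha\cap\beta=\emptyset$ for all $m$ on their respective lines except the smallest few. Careful bookkeeping then isolates exactly five pairs left uncovered by this family: $(4,4)$, $(4,5)$, $(5,6)$, $(5,7)$, and $(6,8)$.

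Three of these, namely $(4,4),(4,5),(6,8)$, are precisely the excluded pairs. The remaining two I would build by hand: the matrices $T_6\langle1,4;3,5\rangle$ and $T_7\langle1,5;4,6\rangle$ lie in ${\mathcal T}_{\le2}$, and Lemma~\ref{lem:smallend} shows that their row graphs each contain a cycle of length $5$ (with the leftover one, respectively two, vertices isolated). I expect the main obstacle to be exactly this last step together with the bookkeeping preceding it: for the small boundary pairs every splitting $ks+\ell t=n-1$ forces $\alpha\cap\beta\ne\emptyset$, so Lemma~\ref{lem:klcycle} is simply out of reach there. The length-$5$ cycles realizing $(5,6)$ and $(5,7)$ succeed only because they use \emph{two} long edges of difference $i_1+j_1$ rather than one, which is why they escape the main family and must be verified directly, and is consistent with the three genuinely exceptional pairs admitting no construction at all.
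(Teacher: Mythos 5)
Your proposal is correct and follows essentially the same route as the paper: the same key lemma (Lemma~\ref{lem:klcycle}), the identical matrices (up to the harmless swap of $\alpha$ and $\beta$, your general recipe $\alpha=\{\ell t,\ell t+s\}$, $\beta=\{ks,ks+t\}$ specializes exactly to the paper's six-case list), the same exceptional lines $n=2m-3$ and $n=2m-4$, the same two hand-checked pairs $(5,6)$ and $(5,7)$ via $T_6\langle1,4;3,5\rangle$ and $T_7\langle1,5;4,6\rangle$, and the same appeal to Corollary~\ref{cor:starcomp} to promote a cycle to a full component. Incidentally, your parameters $(k,\ell)=(2,m-3)$ and $(3,m-4)$ on the two exceptional lines do satisfy the bounds of Lemma~\ref{lem:klcycle} and give cycle length $k+\ell+1=m$, whereas the values $(m-4,4)$ and $(m-5,5)$ printed in the paper's proof give $k+\ell+1=m+1$ and violate the upper bound on $\ell$, so your bookkeeping is in effect a corrected version of the paper's.
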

\begin{proof}
For integers $m$ and $n$ with $4 \le m \le n$ and $(m,n)\notin \{(4,4),(4,5),(6,8)\}$, we let
\[
T =
\begin{cases}  T_n \langle n-m+1, n-m+2; m-2,n-1 \rangle & \mbox{if $n < 2m-4$;}\\
T_n \langle 3,5;2m-8, 2m-7 \rangle & \mbox{if $n=2m-4$ and $m \ge 7$;}\\
T_6 \langle 1,4;3,5 \rangle & \mbox{if $n=6$ and $m=5$;}\\
T_n \langle 2, 4; 2m-6, 2m-5 \rangle & \mbox{if $n=2m-3$ and $m \ge 6$;}\\
T_7 \langle 1,5;4,6 \rangle & \mbox{if $n=7$ and $m=5$;}\\
T_n \langle m-2,n-1;n-m+1,n-m+2\rangle & \mbox{if $n >2m-3$.}
\end{cases}
\]
We first show that $T$ is well-defined in each case.
From the hypothesis $4\le m\le n$, it immediately follows that each of $n-m+1$, $n-m+2$, $m-2$, and $n-1$ is a positive integer. In the case $n=2m-4$ and $n=2m-3$, $2m-8 > 0$ and $2m-6 > 0$ since $m \ge 7$ and $m \ge 6$, respectively.

In addition, $n-m+1 < n-m+2 < m-2 < n-1 $ if $n < 2m-4$, and $m-2 < n-m+1 < n-m+2 < n-1$ if $n > 2m-3$.
It is easy to check that the four parameters for the cases $n=2m-4$ and $n=2m-3$ given above are distinct and arranged to fulfil the conditions which we imposed on a Toeplitz matrix.
Furthermore, by using Theorem~\ref{thm:ind2}, we can easily check that $T$ in each case belongs to $\mathcal{T}_{\le 2}$.

Now, we let $k=m-2$ and $\ell =1$ if $n < 2m-3$;
$k=m-5$ and $\ell=5$ if $n=2m-4$ and $m \ge 7$;
$k=m-4$ and $\ell=4$ if $n=2m-3$ and $m \ge 6$;
$k=1$ and $\ell = m-2$ if $n > 2m-3$. Then the integers $k$ and $\ell$ satisfy the inequalities and the equality given in Lemma~\ref{lem:klcycle}.
Thus $T$ contains a cycle of length $m$ for $4 \le m \le n$ except $(m,n)\in \{(4,4),(4,5),(5,6),(5,7),(6,8)\}$.
We may check that $152631$ and $162731$ are $5$-cycles in ${\rm RG}(T)$ for $(m,n)=(5,6)$ and $(m,n)=(5,7)$, respectively.
Since $T \in \mathcal{T}_{\le 2}$, an $m$-cycle is a component of ${\rm RG}(T)$ by Corollary~\ref{cor:starcomp}.
\end{proof}

Among the matrices in $\mathcal T_{\le 2}$ is of the form $T_n \langle i_1,i_2 ; j_1 \rangle$ or $T_n \langle i_1; j_1,j_2 \rangle$,
we may identify the matrices whose row graphs contain a cycle component.
We need the following lemmas.

\begin{lemma}\label{lem:iso}
 Let $G$ be the graph of a symmetric Toeplitz matrix $ST_n \langle u_1,\ldots, u_k \rangle$ with $\gcd(u_1,\ldots,u_k)=d \ge 2$.
 In addition, let $G_i$ be the subgraph of $G$ induced by the vertex set $\{v \in V(G) \mid v \equiv i \pmod{d}\}$ for each integer $1 \le i \le d$ and $r$ be an integer such that $r \equiv n\pmod{d}$ and $1\le r \le d$.
Then, for each integer $1 \le i \le d$,  $G_i$ is isomorphic to the graph $H_i$ of a symmetric Toeplitz matrix $T_i$ where
\begin{eqnarray*}
T_i = \begin{cases} ST_{\lceil n/d \rceil} \langle u_1/d, \ldots, u_k/d \rangle & \mbox{ if } 1 \le i \le r;\\
ST_{\lceil n/d \rceil -1} \langle u_1/d, \ldots, u_k/d \rangle  & \mbox{ if }r+1 \le  i \le d.
\end{cases}
\end{eqnarray*}
In addition, there is no edge joining a vertex in $G_i$ and a vertex in $G_j$ if $i \neq j$.
\end{lemma}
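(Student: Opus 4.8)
The plan is to argue directly from the adjacency rule of the graph of a symmetric Toeplitz matrix. Since $ST_n\langle u_1,\dots,u_k\rangle$ is symmetric and its $(x,y)$-entry equals $1$ precisely when $|x-y|\in\{u_1,\dots,u_k\}$, the graph $G$ has vertex set $[n]$ with $x$ and $y$ adjacent if and only if $|x-y|\in\{u_1,\dots,u_k\}$. I would state this at the outset and use it throughout.

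The ``no cross-class edges'' claim is then immediate. If $x\sim y$ in $G$, then $|x-y|=u_m$ for some $m$, and since $d=\gcd(u_1,\dots,u_k)$ divides every $u_m$ we get $d\mid |x-y|$, i.e.\ $x\equiv y\pmod d$. Hence no edge joins a vertex of $G_i$ to a vertex of $G_j$ when $i\neq j$.

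For the isomorphism $G_i\cong H_i$, I would list the vertices of residue class $i$ in increasing order as $i,\,i+d,\,i+2d,\dots$ up to the largest value not exceeding $n$; there are $m_i:=\lfloor (n-i)/d\rfloor+1$ of them. Define $\phi_i\colon i+sd\mapsto s+1$ for $s=0,1,\dots,m_i-1$, a bijection from $V(G_i)$ onto $[m_i]$. The core computation is that $i+sd$ and $i+td$ are adjacent in $G$ iff $|s-t|\,d\in\{u_1,\dots,u_k\}$ iff $|s-t|\in\{u_1/d,\dots,u_k/d\}$, the last equivalence using $d\mid u_m$; this is exactly the condition for $s+1$ and $t+1$ to be adjacent in the graph $H_i$ of $ST_{m_i}\langle u_1/d,\dots,u_k/d\rangle$. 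Thus $\phi_i$ is a graph isomorphism. (Here a parameter $u_m/d$ exceeding $m_i-1$ simply produces no edge within the class and is understood as vacuous, so $H_i$ is unaffected.)

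It remains to match $m_i$ with the stated orders, and this bookkeeping with floors is the only delicate point. Writing $n=qd+r$ with $1\le r\le d$, one checks that $\lceil n/d\rceil=q+1$ in every case (including $d\mid n$, where $r=d$). A short computation then gives $\lfloor (n-i)/d\rfloor=q$ for $1\le i\le r$ and $\lfloor (n-i)/d\rfloor=q-1$ for $r+1\le i\le d$, whence $m_i=q+1=\lceil n/d\rceil$ in the first range and $m_i=q=\lceil n/d\rceil-1$ in the second. Care is mainly needed at the boundary $d\mid n$ so that the two index ranges, the value of $r$, and the ceiling all align; the rest of the argument is a direct translation of the adjacency rule.
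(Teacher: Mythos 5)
Your proof is correct and follows essentially the same route as the paper's: divisibility of each $u_m$ by $d$ rules out cross-class edges, and the scaling bijection $i+sd\mapsto s+1$ transports the adjacency rule $|x-y|\in\{u_1,\dots,u_k\}$ to $|s-t|\in\{u_1/d,\dots,u_k/d\}$. If anything you are more careful than the paper on the two bookkeeping points it glosses over (the exact class sizes via $\lfloor (n-i)/d\rfloor+1$, and the vacuous case $u_m/d> m_i-1$), so there is nothing to fix.
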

\begin{proof}
	Take two adjacent vertices $x$ and $y$ in $G$.
	Then the $(x,y)$-entry of a symmetric Toeplitz matrix $ST_n \langle u_1,\ldots, u_k \rangle$ is $1$, so $|x-y| \in \{u_1, \ldots,u_k\}$, Therefore $x \equiv y \pmod{d}$ by the hypothesis.
	Thus there is no edge joining a vertex in $G_i$ and a vertex in $G_j$ if $i \neq j$ by the definition of $G_i$.
	In addition, each of $G_1, \ldots, G_r$ has $\lceil n/d \rceil$ vertices and each of $G_{r+1}, \ldots, G_d$ has $\lceil n /d \rceil-1$ vertices.
	Take $i \in [r]$ and define a map
	$\varphi_i: V(G_i) \to V(H_i)$ sending $i+d \ell$ to $i+\ell$ (each element in $V(G_i)$ is in the form of $i+d\ell$ for an integer $\ell$).
	Then it is obvious that $\varphi_i$ is a bijection.
Now,
\begin{tabbing}
\= $x$ and $y$ are adjacent in  $G_i$ \\
\> $\Leftrightarrow$  $|x-y| \in \{u_1, \ldots, u_k\}$\\
\> $\Leftrightarrow$ $\left|(x-y)/d\right| \in \left\{u_1/d,\ldots, u_k/d\right\}$\\
\> $\Leftrightarrow$  $|\varphi_i(x)-\varphi_(y)| \in \left\{u_1/d,\ldots, u_k/d\right\}$ \\
\> $\Leftrightarrow$ $\varphi_i(x)$ and $\varphi_i(y)$ are adjacent in $T_{\lceil n/d \rceil} \langle u_1/d, \ldots, u_k/d \rangle$.
\end{tabbing}
	Thus $G_i \cong H_i$.
By a similar argument, we may show that $G_i \cong H_i$ for $i \in \{r+1, \ldots, d\}$.
	\end{proof}

\begin{lemma}\label{lem:iscycle}
Let $G$ be the graph of a symmetric Toeplitz matrix $ST_n\langle u_1,u_2 \rangle$ and let $d =\gcd(u_1,u_2) \ge 2$.
In addition, let $G_i$ be the subgraph of $G$ induced by the vertex set $\{v \in V(G) \mid v \equiv i \pmod{d}\}$ for each $i=1,2,\ldots,d$ and $r$ be an integer such that $r \equiv n\pmod{d}$ and  $1\le r \le d$.
Suppose that $n \le u_1+u_2 \le d\lceil n/d \rceil$.
Then $G_i$ is a cycle for each $i=1,2,\ldots,r$.
\end{lemma}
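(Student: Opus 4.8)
The plan is to combine the residue-class decomposition supplied by Lemma~\ref{lem:iso} with a reduction of each piece to a circulant graph. First I would invoke Lemma~\ref{lem:iso}: since $d=\gcd(u_1,u_2)\ge 2$, for each $i\in[r]$ the component $G_i$ is isomorphic to the graph $H_i$ of the symmetric Toeplitz matrix $ST_m\langle a,b\rangle$, where $m:=\lceil n/d\rceil$, $a:=u_1/d$, $b:=u_2/d$, and $\gcd(a,b)=1$. It therefore suffices to prove that $H_i$ is a cycle.

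The next step is to extract the arithmetic content of the hypothesis $n\le u_1+u_2\le d\lceil n/d\rceil$. Writing $u_1+u_2=d(a+b)$, the upper bound gives $a+b\le\lceil n/d\rceil=m$, while the lower bound gives $a+b\ge n/d$; since $a+b$ is an integer this forces $a+b\ge\lceil n/d\rceil=m$. Hence $a+b=m$. In particular $a\ge 1$ and $b=m-a\le m-1$, and because $u_1<u_2$ we have $a<b$, so that $m=a+b\ge 3$.

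The heart of the argument is to show that the graph $H$ of $ST_m\langle a,b\rangle$ with $\gcd(a,b)=1$ and $a+b=m$ is a single $m$-cycle. I would identify the vertex set $\{1,\ldots,m\}$ with $\mathbb{Z}_m$ and compare $H$ with the circulant $\mathrm{Circ}(\mathbb{Z}_m,\{\pm a\})$. Every distance-$a$ edge $\{x,x+a\}$ of $H$ is literally an edge $\{x,x+a\}$ of the circulant, and every distance-$b$ edge $\{x,x+b\}$ (which requires $1\le x\le a$) satisfies $x+b=x+m-a\equiv x-a\pmod{m}$, so it too is a circulant edge $\{x,x-a\}$. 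Thus $H$ is a spanning subgraph of $\mathrm{Circ}(\mathbb{Z}_m,\{\pm a\})$. Counting edges, $H$ has exactly $b+a=m$ distinct edges, whereas, since $\gcd(a,m)=\gcd(a,a+b)=\gcd(a,b)=1$ and $m\ge 3$, the circulant also has exactly $m$ edges. Consequently $H$ coincides with $\mathrm{Circ}(\mathbb{Z}_m,\{\pm a\})$, and the latter is the single cycle $0,a,2a,\ldots,(m-1)a,0\pmod{m}$ precisely because $\gcd(a,m)=1$. Hence $H$, and therefore each $G_i$ with $i\in[r]$, is a cycle.

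I expect the main obstacle to be this last step: recognising that a distance-$b$ edge read on the path $\{1,\ldots,m\}$ is the \emph{same} pair of vertices as a $(-a)$-step once indices are taken modulo $m=a+b$, which is exactly what collapses the two edge types into the single connection set $\{\pm a\}$. Some care is needed to confirm that the $a+b$ edges of $H$ are pairwise distinct (a distance-$a$ edge and a distance-$b$ edge cannot coincide since $a\ne b$), so that the edge count genuinely matches that of the circulant; once this is verified, the inclusion $H\subseteq\mathrm{Circ}(\mathbb{Z}_m,\{\pm a\})$ upgrades to equality and the conclusion follows immediately.
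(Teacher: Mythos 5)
Your proof is correct, and while it opens the same way as the paper's, its core argument is genuinely different. Both proofs begin by invoking Lemma~\ref{lem:iso} to identify $G_i$ (for $i\in[r]$) with the graph of $ST_{\lceil n/d\rceil}\langle u_1/d,u_2/d\rangle$ and both extract $u_1/d+u_2/d=\lceil n/d\rceil$ from the two-sided hypothesis on $u_1+u_2$. From there the paper proceeds by citing Lemma~\ref{lem:k2cycle} (a result quoted from \cite{CJKKM}) to produce a cycle of length $(u_1+u_2)/d=\lceil n/d\rceil$, hence a Hamilton cycle of $G_i$, and then shows every vertex of $G_i$ has degree at most $2$ (since $u_1+u_2\ge n$ forbids both $v+u_2$ and $v-u_1$, and likewise both $v+u_1$ and $v-u_2$, from lying in $[n]$), so the Hamilton cycle is all of $G_i$. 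You instead identify the reduced graph $ST_m\langle a,b\rangle$ with the circulant $\mathrm{Circ}(\mathbb{Z}_m,\{\pm a\})$ by observing that a distance-$b$ step equals a $(-a)$-step modulo $m=a+b$, and close the argument with an edge count ($(m-a)+(m-b)=m$ edges on each side, and the vertex identification $[m]\to\mathbb{Z}_m$ is a bijection so distinct edges stay distinct); the condition $\gcd(a,m)=\gcd(a,b)=1$ then makes the circulant a single $m$-cycle. Your route has the advantage of being self-contained --- it does not lean on the external Lemma~\ref{lem:k2cycle} --- and it exposes the structural reason the two edge types fit together into one cycle; the paper's route is shorter given that the cited lemma is available and reuses the same degree bound that appears elsewhere in the section. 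One small point worth making explicit if you write this up: your reliance on $u_1<u_2$ to get $m\ge 3$ is justified by the paper's convention that the index set of a Toeplitz matrix is listed in increasing order (and in the lemma's later application $2(i_1+j_1)\ne n$ guarantees $u_1\ne u_2$), so the degenerate case $2a\equiv 0\pmod m$ that would break the circulant's $2$-regularity cannot occur.
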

\begin{proof}
Fix $i \in [r]$.
By Lemma~\ref{lem:iso}, $G_i$ is the graph of $ST_{\lceil n/d \rceil} \langle u_1/d, u_2/d \rangle$.
By the hypothesis, \[ \frac{n}{d} \le \frac{u_1+u_2}{d} \le \left\lceil \frac{n}{d} \right\rceil,\] so $(u_1+u_2)/d = \lceil n/d \rceil$.
Therefore, by Lemma~\ref{lem:k2cycle}, there is a cycle $C_i$ of length $\lceil n/d \rceil$ in $G_i$.
Thus $C_i$ is a Hamilton cycle of $G_i$.
Take any vertex $v$ in $G_i$.
Then each neighbor of $v$ belongs to $\{v+u_2, v+u_1, v-u_2, v-u_1\}$.
Since $u_2+u_1 \ge n$ and $(v+u_2)-(v-u_1) = u_1+u_2 \ge n$, at most one of $v+u_2$ and $v-u_1$ can be a vertex of $G$.
For the same reason, at most one of $v+u_1$ and $v-u_2$ can be a vertex of $G$.
Thus the degree of $v$ is at most two.
Since $v$ was arbitrarily chosen, each vertex in $G_i$ has degree at most $2$.
Therefore $C_i$ itself is $G_i$.
\end{proof}

\begin{lemma}\label{lem:acyclic}
Let $G$ be the graph of a symmetric Toeplitz matrix $ST_n\langle u_1,u_2 \rangle$ where $\gcd(u_1,u_2)=1$. If $u_1+u_2 >n$, then $G$ is acyclic.
\end{lemma}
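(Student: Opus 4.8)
The plan is to argue by contradiction: I assume $G$ contains a cycle and show that such a cycle would have to use more than $n$ distinct vertices, which is impossible inside $[n]$. Recall that $x$ and $y$ are adjacent in $G$ precisely when $|x-y|\in\{u_1,u_2\}$, and assume without loss of generality that $u_1<u_2$. So suppose $C=v_0v_1\cdots v_{\ell-1}v_0$ is a cycle (hence $\ell\ge 3$), let $m$ and $M$ be the smallest and largest vertices occurring on $C$, and set $\delta_i=v_{i+1}-v_i\in\{\pm u_1,\pm u_2\}$ (indices taken modulo $\ell$). Since every vertex of $C$ lies in the window $[m,M]$ and $M-m\le n-1<u_1+u_2$, the whole argument takes place in a window strictly narrower than $u_1+u_2$.

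First I would extract the local structure at each vertex, reusing the window estimate from the proof of Lemma~\ref{lem:iscycle}. For any vertex $v$ of $C$, the two points $v-u_1$ and $v+u_2$ differ by $u_1+u_2$, so they cannot both lie in $[m,M]$; likewise $v+u_1$ and $v-u_2$ cannot both lie in $[m,M]$. Since the two cycle-neighbours of $v$ are distinct and both lie in $[m,M]$, exactly one of them belongs to $\{v-u_1,\,v+u_2\}$ and the other to $\{v+u_1,\,v-u_2\}$.

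The key step is to globalize this alternation. Writing $Q(\delta)=1$ if $\delta\in\{+u_2,-u_1\}$ and $Q(\delta)=0$ if $\delta\in\{+u_1,-u_2\}$, the statement ``exactly one cycle-neighbour of $v_i$ lies in $\{v_i-u_1,v_i+u_2\}$'' translates into ``exactly one of $Q(\delta_i)=1$ and $Q(\delta_{i-1})=0$ holds,'' and a short case check shows this is equivalent to $Q(\delta_i)=Q(\delta_{i-1})$. Hence $Q$ is constant along $C$, so either every step lies in $\{+u_2,-u_1\}$ or every step lies in $\{+u_1,-u_2\}$. In the first case, say $C$ uses $p$ steps equal to $+u_2$ and $q$ steps equal to $-u_1$; closedness $\sum_i\delta_i=0$ gives $p\,u_2=q\,u_1$, and since $C$ cannot consist of steps of a single sign we have $p,q\ge 1$. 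Because $\gcd(u_1,u_2)=1$ this forces $u_1\mid p$ and $u_2\mid q$, so $p\ge u_1$, $q\ge u_2$, and $\ell=p+q\ge u_1+u_2>n$. But the $\ell$ vertices of $C$ are distinct elements of $[n]$, whence $\ell\le n$, a contradiction; the second case is identical with $u_1$ and $u_2$ interchanged. Therefore $G$ has no cycle and is acyclic.

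The main obstacle is the middle step, namely converting ``each vertex sees one neighbour of each pair-type'' into ``all steps come from a single two-element set''; the sub-$u_1+u_2$ window is exactly what makes the pairing well defined, and the hypothesis $\gcd(u_1,u_2)=1$ is precisely what forces the cycle length up to $u_1+u_2$ at the end. I would emphasize that one should \emph{not} try to bound the geometric range of the cycle instead: cycles of range strictly less than $u_1+u_2$ do occur in $\mathbb{Z}$, so it is the vertex count, rather than the diameter, that delivers the contradiction against $u_1+u_2>n$.
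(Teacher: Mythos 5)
Your proof is correct and follows essentially the same strategy as the paper's: show that all steps of a putative cycle are confined to $\{u_1,-u_2\}$ or to $\{-u_1,u_2\}$, then use $\gcd(u_1,u_2)=1$ and the telescoping sum to force the cycle length up to at least $u_1+u_2>n$, contradicting the vertex count. The only cosmetic difference is how the confinement is obtained — the paper rules out the bad transitions by checking consecutive step pairs directly (a return step gives $v_{i+2}=v_i$, a same-sign switch gives $|v_{i+2}-v_i|=u_1+u_2>n$), whereas you use the window $[m,M]$ and the parity function $Q$, but these are the same observation.
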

\begin{proof}
Suppose, to the contrary, that $u_1+u_2 >n$ and $G$ contains a cycle $v_1v_2\cdots v_\ell v_1$.
Fix $i \in [\ell]$.
Then $|v_{i+1}-v_i|=u_1$ or $u_2$  (we identify $v_{\ell+1}$ and $v_{\ell+2}$  with $v_1$ and $v_2$, respectively).
Suppose that $v_{i+1}-v_i =\pm u_1$.
If $v_{i+2}-v_{i+1}=\mp u_1$, then  $v_{i+2}-v_i=0$, a contradiction.
If $v_{i+2}-v_{i+1}=\pm u_2$, then
$|v_{i+2}-v_i|=u_1+u_2 >n$, a contradiction again.
Thus, if $v_{i+1}-v_{i}=\pm u_1$, then $v_{i+2}-v_{i+1}=\pm u_1$ or $\mp u_2$.
By a similar argument, we may show that if $v_{i+1}-v_{i}=\pm u_2$, then $v_{i+2}-v_{i+1}=\mp u_1$ or $\pm u_2$.	
Therefore $v_{i+1}-v_i \in \{u_1,-u_2\}$ for each $i \in [\ell]$ or $v_{i+1}-v_i \in \{-u_1,u_2\}$ for each $i \in [\ell]$.
We assume the former.
Let $n_1$ and $n_2$ be the numbers of $i$ with $v_{i+1}-v_i=u_1$ and $v_{i+1}-v_i=-u_2$, respectively.
Then $n_1u_1+n_2(-u_2)=0$, so $n_1u_1 = n_2u_2$.
Since $\gcd(u_1,u_2)=1$, $n_1 \ge u_2$ and $n_2 \ge u_1$ and so $n_1+n_2 \ge u_1+u_2 >n$ and we reach a contradiction.
By a similar argument, we should reach a contradiction even in the latter case.
\end{proof}

Now we are ready to characterize the matrices of the form $T_n \langle i_1,i_2 ; j_1 \rangle$ whose row graph contains a cycle.

\begin{theorem}\label{thm:s1t2cycle}
Let $T=T_n\langle i_1,i_2;j_1 \rangle$ be a Toeplitz matrix in ${\mathcal T}_{\le 2}$, $d = \gcd(i_1+j_1,i_2-i_1)$, and
$r$ be an integer such that $1 \le r \le d$ and $n \equiv r \pmod{d}$.
Then the row graph of $T$ contains a cycle if and only if
$i_2 \ne 2i_1+j_1$, $i_2+j_1 \le d \lceil n/d \rceil$ and $1+i_1 \le r$.
Especially, if the row graph contains a cycle, then its length is $\lceil n/d \rceil$.
\end{theorem}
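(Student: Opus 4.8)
The plan is to realize the row graph $G := {\rm RG}(T)$ as a subgraph of the graph of a symmetric Toeplitz matrix and then exploit the residue-class decomposition provided by Lemmas~\ref{lem:iso}--\ref{lem:acyclic}. Write $s = i_2 - i_1$ and $w = i_1 + j_1$, so that $d = \gcd(s,w)$ and $i_2 + j_1 = s + w$; since $T \in {\mathcal T}_{\le 2}$ with $k_1 = 2$, Theorem~\ref{thm:ind2} gives $i_2 + j_1 \ge n$, i.e. $s + w \ge n$. First I would read off the edges of $G$ from Theorem~\ref{thm:edgecond} (equivalently Lemma~\ref{lem:smallend}): the difference $i_2 + j_1$ cannot produce an edge because $i_2 + j_1 \ge n$, and the difference $j_2-j_1$ is absent since $k_2 = 1$. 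Hence the edges of $G$ are exactly the \emph{type A} edges $\{v, v+s\}$ with $i_1 + 1 \le v \le n - s$ together with the \emph{type B} edges $\{v, v+w\}$ with $1 \le v \le n - w$.

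Next I would dispose of the degenerate case $s = w$, i.e. $i_2 = 2i_1 + j_1$: then every edge has the single difference $s$, so $G$ is a disjoint union of the paths $v,\,v+s,\,v+2s,\dots$ and contains no cycle, which already shows that $i_2 \ne 2i_1 + j_1$ is necessary. Assuming $s \ne w$, let $H$ be the graph of the symmetric Toeplitz matrix $ST_n\langle s,w\rangle$. Comparing edge sets, $H$ and $G$ share all type B edges and all type A edges with $v > i_1$, so $G = H - D$ with $D = \{\{v,v+s\} : 1 \le v \le i_1\}$; each such $\{v,v+s\}$ is genuinely an edge of $H$ because $v + s \le i_1 + s = i_2 \le n-1$. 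Every edge of $H$ joins vertices that are congruent modulo $d$, so $G$ splits over the residue classes $G_1,\dots,G_d$, where $G_\rho$ is induced on $\{v \equiv \rho \pmod d\}$, and any cycle of $G$ lies inside a single $G_\rho$.

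The heart of the argument is to determine each $G_\rho$. By Lemma~\ref{lem:iso}, the corresponding class $H_\rho$ of $H$ is isomorphic to the graph of $ST_{m_\rho}\langle s/d, w/d\rangle$ with $m_\rho = \lceil n/d\rceil$ for $\rho \le r$ and $m_\rho = \lceil n/d\rceil - 1$ for $\rho > r$, where $\gcd(s/d,w/d)=1$. I would then split on $i_2+j_1$. If $i_2 + j_1 > d\lceil n/d\rceil$, then $(s+w)/d > m_\rho$ for every $\rho$, so each $H_\rho$—and hence each $G_\rho$—is acyclic by Lemma~\ref{lem:acyclic}, giving the necessity of $i_2 + j_1 \le d\lceil n/d\rceil$. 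If instead $i_2 + j_1 \le d\lceil n/d\rceil$, then since $d \mid (s+w)$ and $s+w \ge n$ we get $(s+w)/d = \lceil n/d\rceil$; for $\rho \le r$ Lemma~\ref{lem:iscycle} shows $H_\rho$ is a single cycle of length $\lceil n/d\rceil$ (and $s \ne w$ forces this length to be at least $3$), while for $\rho > r$ it is acyclic by Lemma~\ref{lem:acyclic}, as $(s+w)/d = m_\rho + 1$.

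Finally I would read off the effect of the deletions $D$. For $\rho > i_1$ no edge of $D$ lies in class $\rho$, so $G_\rho = H_\rho$; for $\rho \le i_1$ the edge $\{\rho, \rho+s\} \in D$ is removed from the cycle $H_\rho$, turning it into a path. Thus the surviving cyclic classes are precisely those with $i_1 < \rho \le r$, which is nonempty if and only if $1 + i_1 \le r$; this establishes the equivalence and simultaneously shows that every cycle of $G$ has length $\lceil n/d\rceil$. The case $d = 1$ (where Lemma~\ref{lem:iso} does not apply) is consistent and easy to dispatch directly: then $r = 1 < 1 + i_1$, the criterion fails, and $G = H - D$ is acyclic, since either $s + w > n$ makes $H$ acyclic by Lemma~\ref{lem:acyclic}, or $s + w = n$ makes $H$ a single Hamilton cycle that $D$ breaks. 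I expect the main obstacle to be the bookkeeping in the boundary subcase $i_2 + j_1 = d\lceil n/d\rceil$: one must check that the deleted edge really belongs to $H_\rho$ and lies on its cycle for every $\rho \le i_1$, and that no further edges are lost in passing from $H$ to $G$, so that exactly the classes $i_1 < \rho \le r$ remain cycles.
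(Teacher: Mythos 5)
Your proposal is correct and follows essentially the same route as the paper: both realize ${\rm RG}(T)$ inside the graph of $ST_n\langle i_1+j_1,\, i_2-i_1\rangle$ (Proposition~\ref{prop:inequality}) and then apply the residue-class machinery of Lemmas~\ref{lem:iso}, \ref{lem:iscycle}, and \ref{lem:acyclic}, differing only in that you make the missing edge set $D=\{\{v,v+s\}:1\le v\le i_1\}$ explicit and track all classes at once, while the paper argues the condition $1+i_1\le r$ via the degree of the minimal vertex of the cyclic class. A small bonus of your write-up is that it explicitly disposes of the degenerate cases $i_2=2i_1+j_1$ and $d=1$, which the paper's proof leaves implicit.
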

\begin{proof}
Let $G={\rm RG}(T)$.
Since $T$ belongs to ${\mathcal T}_{\le 2}$ by the hypothesis, $i_2+j_1 \ge n$ by Theorem~\ref{thm:ind2}.
Therefore $G$ is a spanning subgraph of the graph $L$ of $ST_n \langle i_1+j_1, i_2-i_1 \rangle$ by Proposition~\ref{prop:inequality}.
Let $G_i$ denote the subgraph of $L$ induced by the vertex set $\{v \in V(G) \mid v \equiv i \pmod{d}\}$ for each $i=1,\ldots,d$.

To show the `only if' part, suppose that $G$ contains a cycle $C$.
Then $L$ contains $C$.
By Lemma~\ref{lem:iso},  $C$ is contained in $G_t$ for some $t \in [d]$.
By the same lemma, $G_t$ is isomorphic to the graph of  $ST_{\lceil n/d \rceil} \langle (i_1+j_1)/d,(i_2-i_1)/d \rangle$ or $ST_{\lfloor n/d \rfloor} \langle (i_1+j_1)/d,(i_2-i_1)/d \rangle$.
Note that \[\left\lceil \frac{n}{d}\right\rceil -1 < \frac{n}{d} \le \frac{i_2+j_1}{d} = \frac{i_1+j_1}{d}+\frac{i_2-i_1}{d}.\]
Therefore, if $G_t$ is isomorphic to the graph of $ST_{\left\lceil n/d\right\rceil -1} \langle(i_1+j_1)/d,(i_2-i_1)/d\rangle$, then $G_t$ is acyclic by Lemma~\ref{lem:acyclic} and we reach a contradiction.
Thus $G_t$ is isomorphic to the graph of $ST_{\left\lceil n/d \right \rceil} \langle (i_1+j_1)/d,(i_2-i_1)/d\rangle$.
Hence $t \in [r]$  by Lemma~\ref{lem:iso}.

Suppose, to the contrary, that $i_2+j_1 > d \lceil n/d \rceil$.
Then
\[ \left\lceil \frac{n}{d} \right\rceil < \frac{i_2+j_1}{d}=\frac{i_1+j_1}{d}+\frac{i_2-i_1}{d}.\]
Therefore, by Lemma~\ref{lem:acyclic},
the graph of $ST_{\lceil n/d \rceil}\langle (i_1+j_1)/d,(i_2-i_1)/d \rangle$ is acyclic and we reach a contradiction.
Thus $i_2 + j_1 \le d\lceil n/d \rceil$.
Then \[ \frac{n}{d} \le \frac{i_2+j_1}{d} \le \left\lceil \frac{n}{d} \right\rceil,\] so $(i_2+j_1)/d = \lceil n/d \rceil$.
Thus, by Lemma~\ref{lem:iscycle}, $G_t$ is a cycle.
Hence, $G_t = C$ and the length of $C$ is $(i_2+j_1)/d$.

Since $G_t$ is a cycle, $t$ is adjacent to two vertices in $G_t$.
Since $G$ is the row graph of $T_n\langle i_1,i_2;j_1\rangle$, $t+i_2-i_1$, $t+i_1+j_1$ and $t+i_2+j_1$ are only possible neighbors of $t$.
Since $i_2+j_1 \ge n$, $t+i_2-i_1$ and $t+i_1+j_1$ are the two neighbors of $t$.
Now, if $1+i_1 >r$, then $1 \le i \le r<1+i_1$ and so, by Lemma~\ref{lem:smallend}, $i$ and $i+i_2-i_1$ cannot be adjacent in $G$.
Thus $1+i_1 \le r$ and we have shown the `only if' part.

To show the `if' part, suppose $i_2+j_1 \le d\lceil n/d \rceil$ and $1+i_1 \le r$.
Let $H$ be the subgraph of $G$ induced by the vertex set $\{v \in V(G) \mid v \equiv i_1+1 \pmod d\}$.
Since $1+i_1 \le r$, $H$ has $\lceil n/d \rceil$ vertices and $1+i_1$ is the vertex with smallest labeling.
We show that $H = G_{i_1+1}$.
By the definition, $V(H) = V(G_{i_1+1})$.
Since $G$ is a subgraph of $L$, $H$ is a subgraph of $G_{i_1+1}$.
Take two adjacent vertices $x$ and $y$ in $G_{i_1+1}$.
Then $|x-y| = i_2-i_1$ or $i_1+j_1$.
If $|x-y| = i_1+j_1$, $x$ adn $y$ are adjacent in $G$ and so in $H$.
Suppose $|x-y|=i_2-i_1$.
Without loss of generality, we may assume $x-y = i_2-i_1$.
Since $y \in V(H)$, $y \ge 1+i_1$.
Thus $x$ and $y$ are adjacent in $G$ by Lemma~\ref{lem:smallend} and so in $H$.
Therefore we have shown $H = G_{i_1+1}$.
By Lemma~\ref{lem:iscycle}, $G_{i_1+1}$ is a cycle.
Thus $H$ is also a cycle, which implies that $G$ has a cycle.
\end{proof}

Since the row graphs of $T_n \langle \alpha; \beta \rangle$ and $T_n \langle \beta; \alpha \rangle$ are isomorphic, we may obtain the following result as well.

\begin{corollary}
Let $T=T_n\langle i_1;j_1,j_2 \rangle$ be a Toeplitz matrix in ${\mathcal T}_{\le 2}$, $d = \gcd(i_1+j_1,j_2-j_1)$, and
$r$ be an integer such that $1 \le r \le d$ and $n \equiv r \pmod{d}$.
Then the row graph of $T$ contains a cycle if and only if
$j_2 \ne i_1+2j_1$, $i_1+j_2 \le d \lceil n/d \rceil$ and $1+j_1 \le r$.
Especially, if the row graph contains a cycle, then its length is $\lceil n/d \rceil$.
\end{corollary}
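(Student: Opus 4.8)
The plan is to derive this corollary directly from Theorem~\ref{thm:s1t2cycle} by exploiting the symmetry already established in the paper, namely that the row graph of $T_n \langle \alpha; \beta \rangle$ is isomorphic to that of $T_n \langle \beta; \alpha \rangle$. The matrix under consideration, $T_n \langle i_1; j_1, j_2 \rangle$, has a single subdiagonal parameter and two superdiagonal parameters, which is exactly the ``mirror image'' of the matrix $T_n \langle i_1, i_2; j_1 \rangle$ treated in the theorem. So the first step is to set up the correct translation between the two parameter sets.

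\smallskip
\noindent First I would apply the reflection $T_n \langle i_1; j_1, j_2 \rangle = P\, T_n \langle j_1, j_2; i_1 \rangle\, P^T$ (as recorded in the discussion preceding Theorem~\ref{thm:edgecond}), so that $G = {\rm RG}(T_n \langle i_1; j_1, j_2 \rangle)$ is isomorphic to ${\rm RG}(T_n \langle j_1, j_2; i_1 \rangle)$. The latter matrix is of the form $T_n \langle a_1, a_2; b_1 \rangle$ with $a_1 = j_1$, $a_2 = j_2$, and $b_1 = i_1$, which is precisely the shape to which Theorem~\ref{thm:s1t2cycle} applies. Before invoking the theorem, I would verify that this reflected matrix still lies in ${\mathcal T}_{\le 2}$: since isomorphism of row graphs and the maximum-row-sum condition are both preserved under the conjugation by $P$ (using Corollary~\ref{cor:maxinout}, as the maximum row sum equals the maximum column sum), the hypothesis transfers.

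\smallskip
\noindent Next I would substitute the translated parameters into the conclusion of Theorem~\ref{thm:s1t2cycle}. Writing the theorem's quantities for the reflected matrix, $d = \gcd(a_1 + b_1, a_2 - a_1) = \gcd(j_1 + i_1, j_2 - j_1)$, which matches the $d$ in the corollary's statement. The three conditions ``$a_2 \neq 2a_1 + b_1$, $a_2 + b_1 \le d\lceil n/d\rceil$, and $1 + a_1 \le r$'' become exactly ``$j_2 \neq 2j_1 + i_1 = i_1 + 2j_1$, $j_2 + i_1 \le d\lceil n/d\rceil$, and $1 + j_1 \le r$,'' which are the stated conditions. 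The cycle length $\lceil n/d\rceil$ is invariant under the substitution since it depends only on $n$ and $d$. Because the permutation $P$ does not change $n$ and the residue $r$ is defined purely by $n$ and $d$, the integer $r$ is also unchanged.

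\smallskip
\noindent I expect the only delicate point to be bookkeeping: one must make sure that after the relabeling $\alpha \leftrightarrow \beta$ the roles of $i$ and $j$ are swapped consistently everywhere, so that no condition is accidentally transposed. There is no genuine mathematical obstacle here, since all the analytic content lives in Theorem~\ref{thm:s1t2cycle}; the corollary is a formal consequence of the isomorphism ${\rm RG}(T_n \langle \alpha; \beta \rangle) \cong {\rm RG}(T_n \langle \beta; \alpha \rangle)$ together with a careful renaming of parameters.
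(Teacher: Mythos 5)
Your proposal is correct and matches the paper exactly: the paper derives this corollary with no separate proof, justifying it by the same observation that ${\rm RG}(T_n\langle\alpha;\beta\rangle)\cong{\rm RG}(T_n\langle\beta;\alpha\rangle)$ (via conjugation by the anti-diagonal permutation $P$) and then reading off Theorem~\ref{thm:s1t2cycle} with the parameters renamed. Your translation of the conditions is the correct one, so nothing is missing.
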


In the next subsection, we focus on the Toeplitz matrices in ${\mathcal T}_{\le 2}^{\star}$ whose row graphs consist of only cycle components.
We denote the set of such matrices by ${\mathcal T}_{\le 2}^{\circ}$.

\subsection{Toeplitz matrices in  ${\mathcal T}_{\le 2}^{\circ}$}

We first give a sufficient condition for a Toeplitz matrix belonging to ${\mathcal T}_{\le 2}^{\circ}$.
Before doing so, we need to deduce several results.

Recall that ${\mathcal C}(T)$ denotes the adjacency matrix of ${\rm RG}(T)$ for a Toeplitz matrix $T$ and $ST_n \langle \gamma \rangle$ means a symmetric Toeplitz matrix $T_n \langle \gamma, \gamma \rangle$.
If ${\mathcal C}(T)$ is a Toeplitz matrix for a Toeplitz matrix $T$ of order $n$, then ${\mathcal C}(T)=ST_n \langle \gamma \rangle$ for some $\gamma\subset [n-1]$ since ${\mathcal C}(T)$ is symmetric.

For $\alpha=\{i_1,\ldots,i_{k_1}\}$ and $\beta=\{j_1,\ldots,j_{k_2}\}$ where $k_1,k_2\ge2$, define the following sets based upon Lemma~\ref{lem:smallend}:
\begin{align}\label{cor:cdsubg}
\gamma_1&= \{i_t-i_s\in[n-1] \mid 1 \le s < t \le k_1 \}; \notag\\
\gamma_2 &= \{j_t-j_s\in[n-1] \mid 1 \le s < t \le k_2\};  \\
\gamma_3 &= \{i_s+j_t\in[n-1] \mid 1 \le s \le k_1, 1 \le t \le k_2\}.\notag \end{align}
where $\gamma_1=\emptyset$ (resp.\ $\gamma_2= \emptyset$) if $k_1\le1$ (resp. $k_2\le1$) and $\gamma_3 = \emptyset$ if $\alpha=\emptyset$ or $\beta=\emptyset$.

For convenience, we let
\[\gamma(\alpha,\beta)=\gamma_1 \cup \gamma_2 \cup \gamma_3.\]
We denote by $A\le B$ for matrices $A=[a_{ij}]$ and $B=[b_{ij}]$ of order $n$ such that $a_{ij}\le b_{ij}$ for all $i,j\in[n]$.
The following result immediately follows from Lemma~\ref{lem:smallend}.
\begin{proposition}\label{prop:inequality}
For $T=T_n \langle \alpha;\beta\rangle$, the row graph of $T$ is a spanning subgraph of the graph of $ST_n \langle \gamma(\alpha,\beta)\rangle$, i.e. ${\mathcal C}(T) \le ST_n \langle \gamma(\alpha,\beta)\rangle$.
\end{proposition}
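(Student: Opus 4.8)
The plan is to unwind the definitions so that the inequality $\mathcal{C}(T) \le ST_n\langle \gamma(\alpha,\beta)\rangle$ becomes a statement about edges: every edge of $\mathrm{RG}(T)$ is also an edge of the graph of $ST_n\langle\gamma(\alpha,\beta)\rangle$, and both graphs have the same vertex set $[n]$ (so that ``spanning'' is immediate). Since $\mathcal{C}(T)$ is the adjacency matrix of $\mathrm{RG}(T)$ and $ST_n\langle\gamma(\alpha,\beta)\rangle$ is a symmetric $(0,1)$-Toeplitz matrix whose $(u,v)$-entry is $1$ precisely when $|u-v|\in\gamma(\alpha,\beta)$, the entrywise inequality $\mathcal{C}(T)\le ST_n\langle\gamma(\alpha,\beta)\rangle$ is equivalent to: whenever $u<v$ are adjacent in $\mathrm{RG}(T)$, we have $v-u\in\gamma(\alpha,\beta)$.

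First I would fix adjacent vertices $u<v$ in $\mathrm{RG}(T)$. By Lemma~\ref{lem:smallend} (applicable since $T\in\mathcal{T}_{\le 2}$), one of the three listed possibilities holds, giving respectively $v-u=j_2-j_1$, $v-u=i_2-i_1$, or $v-u=i_1+j_1$. I would then observe that each of these three values lies in the corresponding piece of $\gamma(\alpha,\beta)$: the difference $i_2-i_1$ is one of the elements $i_t-i_s$ defining $\gamma_1$ (take $s=1,t=2$), the difference $j_2-j_1$ is one of the elements defining $\gamma_2$, and the sum $i_1+j_1$ is one of the elements $i_s+j_t$ defining $\gamma_3$. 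In each case $v-u$ lies in $[n-1]$ automatically because $1\le u<v\le n$, so the membership conditions in the definitions \eqref{cor:cdsubg} are satisfied and $v-u\in\gamma_1\cup\gamma_2\cup\gamma_3=\gamma(\alpha,\beta)$. Hence $(u,v)$ is an edge of the graph of $ST_n\langle\gamma(\alpha,\beta)\rangle$, establishing the inequality.

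Strictly speaking the proposition is stated for $T=T_n\langle\alpha;\beta\rangle$ in the generality where $\alpha$ or $\beta$ may have fewer than two elements or be empty; in those degenerate cases the relevant $\gamma_i$ is declared empty, but then the corresponding case of Lemma~\ref{lem:smallend} does not occur either (if $|\alpha|\le 1$ there is no pair $i_s<i_t$, so no edge of type (ii) arises), so the argument still matches up case by case. I would remark that the edge-condition version of Lemma~\ref{lem:smallend} used here is exactly the specialization that drops the ``large'' combinations $i_1+j_2,\ i_2+j_1,\ i_2+j_2$, which cannot occur for a matrix in $\mathcal{T}_{\le 2}$ by Theorem~\ref{thm:ind2}; that is precisely why only the three small differences survive and land in $\gamma(\alpha,\beta)$.

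I do not expect a genuine obstacle here, since the result is essentially a restatement of Lemma~\ref{lem:smallend} in matrix language; the only point requiring care is the bookkeeping in the degenerate cases and the verification that the range restrictions ($v-u\in[n-1]$) in the definitions of $\gamma_1,\gamma_2,\gamma_3$ are met, both of which are immediate from $1\le u<v\le n$. The mild subtlety worth stating explicitly is that the inclusion of edges is generally strict: $\mathrm{RG}(T)$ is only a \emph{spanning subgraph} because the index constraints in Lemma~\ref{lem:smallend}(i)--(iii) (for instance $i_1+1\le u$ in case (ii)) can fail for a pair $u,v$ with $v-u\in\gamma(\alpha,\beta)$, so not every such pair is an edge of $\mathrm{RG}(T)$ even though it is an edge of $ST_n\langle\gamma(\alpha,\beta)\rangle$.
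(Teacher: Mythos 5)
Your proof is correct and is essentially the paper's own argument: the paper gives no proof beyond the remark that the proposition ``immediately follows from Lemma~\ref{lem:smallend}'', and your write-up simply spells out that deduction (every edge difference produced by the lemma lies in $\gamma_1\cup\gamma_2\cup\gamma_3$, and the range condition $v-u\in[n-1]$ is automatic). The only caveat worth recording is that Lemma~\ref{lem:smallend} presupposes $T\in{\mathcal T}_{\le 2}$ with $|\alpha|=|\beta|=2$, whereas the proposition is stated for arbitrary $T_n\langle\alpha;\beta\rangle$ and the sets $\gamma_1,\gamma_2,\gamma_3$ are defined for arbitrary $k_1,k_2$; for full generality one should instead invoke the forward direction of Theorem~\ref{thm:edgecond}, which yields $v-u\in\gamma(\alpha,\beta)$ for any adjacent pair $u<v$ with no restriction on $|\alpha|$, $|\beta|$, or the row sums --- a substitution that leaves your argument otherwise unchanged.
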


\begin{proposition}\label{prop:cdistoep}
Let $T=T_n \langle i_1, i_2 ; j_1, j_2 \rangle$ be a Toeplitz matrix with $i_1+j_2=i_2+j_1=n$ and $2(i_1+j_1) \neq n$.
Then the row graph of $T$ is the graph of $ST_n \langle i_1+j_1,n-(i_1+j_1)\rangle$ and consists of only cycle components.
\end{proposition}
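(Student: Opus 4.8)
The plan is to first identify ${\rm RG}(T)$ with a concrete symmetric Toeplitz graph and then count degrees. Write $\alpha=\{i_1,i_2\}$ and $\beta=\{j_1,j_2\}$. Since $i_1+j_2=i_2+j_1=n$ implies $i_1+j_2\ge n$ and $i_2+j_1\ge n$, Theorem~\ref{thm:ind2} gives $T\in\mathcal{T}_{\le2}$, so Lemma~\ref{lem:smallend} and Proposition~\ref{prop:inequality} are available. The two equalities also force $i_2-i_1=j_2-j_1$; set $u_1=i_1+j_1$ and $u_2:=n-u_1$, so that $u_2=i_2-i_1=j_2-j_1$ and $u_1+u_2=n$. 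I would begin by computing $\gamma(\alpha,\beta)$ from \eqref{cor:cdsubg}: here $\gamma_1=\{i_2-i_1\}=\{u_2\}$, $\gamma_2=\{j_2-j_1\}=\{u_2\}$, and for $\gamma_3$ the four sums $i_s+j_t$ are $u_1$, $i_1+j_2=n$, $i_2+j_1=n$, and $i_2+j_2=n+u_2$, only the first of which lies in $[n-1]$, so $\gamma_3=\{u_1\}$. Hence $\gamma(\alpha,\beta)=\{u_1,u_2\}=\{i_1+j_1,\,n-(i_1+j_1)\}$, and the hypothesis $2(i_1+j_1)\neq n$ guarantees $u_1\neq u_2$. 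By Proposition~\ref{prop:inequality} this already shows ${\rm RG}(T)$ is a spanning subgraph of the graph of $ST_n\langle u_1,u_2\rangle$.

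Next I would upgrade this containment to equality by checking that every edge of $ST_n\langle u_1,u_2\rangle$ actually appears in ${\rm RG}(T)$, using the adjacency criterion of Lemma~\ref{lem:smallend}. Edges of difference $u_1=i_1+j_1$ are exactly the pairs $(u,u+u_1)$ with $1\le u\le n-u_1$, and clause (iii) of the lemma realizes all of them. Edges of difference $u_2$ split into two ranges: clause (i) yields $(u,u+u_2)$ for $1\le u\le n-j_2=i_1$, and clause (ii) yields $(u,u+u_2)$ for $i_1+1\le u\le n-i_2+i_1=i_1+j_1$; since these ranges are consecutive their union is $[1,i_1+j_1]=[1,n-u_2]$, which is the full set of difference-$u_2$ pairs. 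Therefore ${\rm RG}(T)$ equals the graph of $ST_n\langle i_1+j_1,\,n-(i_1+j_1)\rangle$, proving the first assertion. The only thing to watch here is the bookkeeping with the index ranges, relying on the identities $n-j_2=i_1$ and $n-i_2+i_1=i_1+j_1$ that come straight from the hypotheses.

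Finally, to see that this graph consists only of cycle components, I would argue that every vertex has degree exactly $2$, after which being a disjoint union of cycles is immediate (and consistent with Corollary~\ref{cor:starcomp}). Fix $v\in[n]$; its potential neighbours are $v\pm u_1$ and $v\pm u_2$. Because $u_1+u_2=n$, the two candidates $v+u_1$ and $v-u_2$ satisfy $(v+u_1)-(v-u_2)=n$, so they cannot both lie in $[n]$; and since $v\le u_2$ forces $v+u_1\le n$ while $v\ge u_2+1$ forces $v-u_2\ge1$, exactly one of them lies in $[n]$. The symmetric computation shows exactly one of $v-u_1,\,v+u_2$ lies in $[n]$. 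These two chosen neighbours are distinct, since any coincidence would force $u_1=u_2$ (or $u_1=0$, or $u_2=0$), contradicting $u_1\neq u_2$. Hence $\deg(v)=2$ for every $v$, so ${\rm RG}(T)$ is $2$-regular and thus a disjoint union of cycles. I expect no deep obstacle—the work is the careful range arithmetic in the equality step—while the degree count is where the hypothesis $2(i_1+j_1)\neq n$ is genuinely used.
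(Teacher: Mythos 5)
Your proposal is correct and follows essentially the same route as the paper: both establish that ${\rm RG}(T)$ equals the graph of $ST_n \langle i_1+j_1,\, n-(i_1+j_1)\rangle$ by combining Proposition~\ref{prop:inequality} with a case check against Lemma~\ref{lem:smallend} (your organization by ranges of the smaller endpoint is the same computation as the paper's split on $w \le n-j_1$ versus $w > n-j_1$). The only difference is cosmetic: for $2$-regularity the paper counts the $2n$ ones of the symmetric Toeplitz matrix and invokes the degree bound from ${\mathcal T}_{\le 2}$, while you verify $\deg(v)=2$ vertex by vertex via the pairing $\{v+u_1,v-u_2\}$, $\{v-u_1,v+u_2\}$; both are sound.
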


\begin{proof}
Since $i_1+j_2=i_2+j_1=n$, ${\mathcal C}(T) \le ST_n \langle i_2-i_1, j_2-j_1, i_1+j_1  \rangle$ by \eqref{cor:cdsubg} and Proposition~\ref{prop:inequality}.
By the hypothesis, $j_2-j_1 = i_2-i_1 = n-(i_1+j_1)$, so $ST_n \langle i_2-i_1, j_2-j_1, i_1+j_1 \rangle = ST_n \langle i_1+j_1, n-(i_1+j_1) \rangle$.
To show ${\mathcal C}(T)=ST_n \langle i_1+j_1, n-(i_1+j_1)\rangle$, take an entry $1$ of $ST_n \langle i_1+j_1, n-(i_1+j_1) \rangle$.
Let $(v,w)$ be the address of the entry.
By symmetry, we may assume $v<w$.
Then $w-v = i_1+j_1$ or $w-v=n-(i_1+j_1)$ by the definition of Toeplitz matrix.

If $w-v = i_1+j_1$, then the $(v,w)$-entry of ${\mathcal C}(T)$ is $1$ by Lemma~\ref{lem:smallend}(iii).
Now suppose $w-v=n-(i_1+j_1)$.

{\it Case 1.} $w \le n-j_1$.
Then $j_1 \le n-w$.
In addition, $v=w-(n-j_1)+i_1 \le i_1=n-j_2$, so $j_2 \le n-v$.
Since $w-v=n-(i_1+j_1)=j_2-j_1$, the $(v,w)$-entry of ${\mathcal C}(T)$ is $1$ by Lemma~\ref{lem:smallend}(i).

{\it Case 2.} $w > n-j_1$.
Then $w >i_2$ and $v = w-(n-j_1)+i_1 > i_1$.
Since $w-v=n-(i_1+j_1)=i_2-i_1$,
the $(v,w)$-entry is $1$ by Lemma~\ref{lem:smallend}(ii).

In both cases, we have shown that ${\mathcal C}(T) = ST_n \langle i_1+j_1, n-(i_1+j_1) \rangle$.
It is known that the number of $1$'s in a symmetric Toeplitz matrix $ST_n \langle u_1, \ldots, u_k \rangle$ is
$2(kn-\sum_{i=1}^ku_i)$.
Thus $ST_n \langle i_1+j_1, n-(i_1+j_1) \rangle$ has exactly $2n$ entries of $1$ and so ${\mathcal C}(T)$ has $2n$ entries of $1$, which implies that each row sum is exactly $2$.
Hence ${\mathcal C}(T)$ consists of only cycle components.
\end{proof}

\begin{lemma}[\cite{CJKKM}]\label{lem:k2cycle} Let $G$ be the graph of a symmetric Toeplitz matrix $ST_n\langle u_1, u_2\rangle$ with $n \ge u_1+u_2$.
Then $G$ contains a cycle of length $(u_1+u_2)/ \gcd(u_1,u_2)$.\end{lemma}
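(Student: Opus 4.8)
The plan is to construct an explicit cycle of the required length and then verify it is genuine. Write $d=\gcd(u_1,u_2)$, $a=u_1/d$, $b=u_2/d$ and $s=a+b=(u_1+u_2)/d$, so that $\gcd(a,b)=1$ and hence $\gcd(a,s)=\gcd(a,a+b)=1$. The key observation is that the map $k\mapsto ka \bmod s$ is then a bijection on $\{0,1,\dots,s-1\}$. Set $x_k = ka \bmod s$ and consider the vertices $v_k = 1 + d\,x_k$ for $k=0,1,\dots,s-1$. Since $x_0,\dots,x_{s-1}$ is a permutation of $\{0,\dots,s-1\}$, the $v_k$ are $s$ distinct vertices lying in the arithmetic progression $\{1,1+d,\dots,1+(s-1)d\}$.

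Next I would check that $v_0 v_1 \cdots v_{s-1} v_0$ is a cycle in $G$. Each integer difference $x_{k+1}-x_k$ equals either $a$ (when no reduction modulo $s$ occurs) or $a-s=-b$ (when it does), so $v_{k+1}-v_k = d(x_{k+1}-x_k) \in \{u_1,-u_2\}$; in either case $|v_{k+1}-v_k|\in\{u_1,u_2\}$, which is exactly the adjacency condition for the graph of $ST_n\langle u_1,u_2\rangle$. The closing step is handled the same way: $x_{s-1} = -a \bmod s = b$, so $v_{s-1}=1+u_2$ and $v_0-v_{s-1}=-u_2$, again an edge. For the vertices to be legitimate I need $v_k\le n$; the largest vertex is $1+(s-1)d = 1 + u_1+u_2 - d \le u_1+u_2 \le n$, using $d\ge 1$ and the hypothesis $n\ge u_1+u_2$. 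Finally, distinctness (shown above) together with $s\ge 3$ --- which holds because $\gcd(a,b)=1$ with $a\ne b$ (as $u_1\ne u_2$) forces $a+b\ge 3$ --- guarantees a genuine cycle of length $s=(u_1+u_2)/\gcd(u_1,u_2)$.

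The only delicate point, and the one I would emphasize, is the distinctness of the $v_k$: everything else is a direct difference computation, but distinctness rests entirely on the bijectivity of $k\mapsto ka\bmod s$, which in turn is where the coprimality of $a$ and $b$ is really used. An alternative route avoiding the modular bookkeeping would be to first invoke Lemma~\ref{lem:iso} to reduce to the coprime case $\gcd(u_1,u_2)=1$ --- the hypothesis $n\ge u_1+u_2$ yields $\lceil n/d\rceil \ge u_1/d+u_2/d$, so the scaled-down matrix still satisfies the size condition --- and then run the same construction with $d=1$; the isomorphism furnished by Lemma~\ref{lem:iso} transports the resulting cycle back to $G$ while preserving its length. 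Indeed, the progression $\{1,1+d,\dots\}$ used above is precisely the vertex class $G_1$ of that lemma, so the two viewpoints coincide.
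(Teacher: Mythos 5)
Your construction is correct. The paper itself does not prove this lemma --- it is imported verbatim from the reference \cite{CJKKM} (listed as under review) --- so there is no in-paper argument to compare against; your proposal supplies a complete, self-contained proof of the cited fact. All the key steps check out: $\gcd(a,s)=\gcd(a,a+b)=\gcd(a,b)=1$ makes $k\mapsto ka\bmod s$ a bijection, which gives distinctness of the $v_k$; each consecutive difference $d(x_{k+1}-x_k)$ lies in $\{u_1,-u_2\}$, matching the adjacency rule $|i-j|\in\{u_1,u_2\}$ for the graph of $ST_n\langle u_1,u_2\rangle$; the closing edge follows from $x_{s-1}=b$; the range bound $1+(s-1)d=1+u_1+u_2-d\le n$ uses the hypothesis exactly where needed; and $s\ge 3$ follows since $u_1\ne u_2$ rules out $a=b=1$. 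The alternative route you sketch via Lemma~\ref{lem:iso} is also consistent with how the present paper uses this lemma (e.g., in Lemma~\ref{lem:iscycle} and Theorem~\ref{thm:dcycle}), where the reduction to the coprime case by scaling through the residue classes modulo $d$ is precisely the mechanism employed.
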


In the previous section, we showed that a cycle of length at least $5$ is a Toeplitz-row graph.
Now we identify the Toeplitz matrices each of whose row graph is a cycle.

\begin{theorem}\label{thm:cycle}
The row graph of a Toeplitz matrix $T$ is a cycle if and only if $T=T_n \langle i_1, i_2 ; j_1, j_2 \rangle$ with $i_1+j_2=i_2+j_1 = n$  and $\gcd (n, i_1+j_1)  = 1$.
\end{theorem}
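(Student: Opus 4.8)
The plan is to prove both implications, using Proposition~\ref{prop:cdistoep} and Lemma~\ref{lem:k2cycle} for the structural backbone and a degree-plus-edge-count argument for the converse.

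For the `if' direction, suppose $i_1+j_2=i_2+j_1=n$ and $\gcd(n,i_1+j_1)=1$. Since $i_1,j_1\ge 1$ we have $i_1+j_1\ge 2$, so coprimality forces $2(i_1+j_1)\ne n$; hence Proposition~\ref{prop:cdistoep} applies and identifies ${\rm RG}(T)$ with the graph of $ST_n\langle i_1+j_1,\,n-(i_1+j_1)\rangle$, every component of which is a cycle. Writing $u_1=i_1+j_1$ and $u_2=n-(i_1+j_1)$, we have $u_1+u_2=n$ and $\gcd(u_1,u_2)=\gcd(i_1+j_1,n)=1$, so Lemma~\ref{lem:k2cycle} yields a cycle of length $(u_1+u_2)/\gcd(u_1,u_2)=n$. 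A cycle on all $n$ vertices is a Hamilton cycle, and since every component is a cycle there can be no other component; thus ${\rm RG}(T)$ is a single $n$-cycle.

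For the `only if' direction, assume ${\rm RG}(T)$ is a cycle. First I would pin down the parameters: by Lemma~\ref{lem:short} a Toeplitz-row graph of order at most $4$ has an isolated vertex, so $n\ge 5$ and the cycle is triangle-free, whence Corollary~\ref{cor:trinaglefree} gives $T\in{\mathcal T}_{\le 2}$ and $|\alpha|,|\beta|\le 2$. The key step is to upgrade these to $|\alpha|=|\beta|=2$ using that ${\rm RG}(T)$ is $2$-regular. Examining Lemma~\ref{lem:smallend}, vertex $1$ can only be the smaller endpoint of an edge and vertex $n$ only the larger one; moreover type-(ii) edges never have $1$ as an endpoint, and type-(i) edges reach at most $n-j_1<n$. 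Hence $\deg(1)=2$ forces both a type-(i) and a type-(iii) edge at $1$, so $|\beta|=2$, while $\deg(n)=2$ forces both a type-(ii) and a type-(iii) edge at $n$, so $|\alpha|=2$. With $T=T_n\langle i_1,i_2;j_1,j_2\rangle$ in hand, Theorem~\ref{thm:ind2} gives $i_1+j_2\ge n$ and $i_2+j_1\ge n$, so $i_1+i_2+j_1+j_2\ge 2n$.

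It remains to force equality and coprimality, and this edge-count pinch is the part I expect to be most delicate. Since ${\rm RG}(T)$ is an $n$-cycle it has exactly $n$ edges, while Lemma~\ref{lem:smallend} bounds the number of edges by $(n-j_2)+(n-i_2)+(n-i_1-j_1)=3n-(i_1+i_2+j_1+j_2)$; comparing with $n$ gives $i_1+i_2+j_1+j_2\le 2n$, and together with the reverse inequality this yields $i_1+j_2=i_2+j_1=n$. Finally, $2$-regularity rules out the degenerate case $2(i_1+j_1)=n$, since there $i_2-i_1=j_2-j_1=i_1+j_1=n/2$ makes every edge a shift by $n/2$, giving a perfect matching rather than a cycle. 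Thus Proposition~\ref{prop:cdistoep} again identifies ${\rm RG}(T)$ with the graph of $ST_n\langle i_1+j_1,\,n-(i_1+j_1)\rangle$, and Lemma~\ref{lem:k2cycle} produces inside it a cycle of length $n/\gcd(n,i_1+j_1)$, which can equal the whole $n$-cycle only if $\gcd(n,i_1+j_1)=1$. This completes the plan.
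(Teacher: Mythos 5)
Your proposal is correct and follows essentially the same route as the paper: Proposition~\ref{prop:cdistoep} together with Lemma~\ref{lem:k2cycle} for sufficiency, and for necessity the degree argument at the vertices $1$ and $n$ (forcing $|\alpha|=|\beta|=2$) followed by the edge-count pinch against the inequalities from Theorem~\ref{thm:ind2}. The only, harmless, deviation is at the very end: the paper gets $\gcd(n,i_1+j_1)=1$ by observing that every edge-difference is divisible by $d=\gcd(n,i_1+j_1)$, so $d>1$ would disconnect the graph, whereas you reuse Lemma~\ref{lem:k2cycle} to exhibit a cycle of length $n/d$ inside the $n$-cycle; both arguments work.
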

\begin{proof}
To show the `if' part, let $T=T_n \langle i_1, i_2 ; j_1, j_2 \rangle$ with $i_1+j_2=i_2+j_1 = n$  and $\gcd (n, i_1+j_1)  = 1$.
We denote the row graph of $T$ by $G$.
By Proposition~\ref{prop:cdistoep}, $G$ consists of cycle components.
Since $\gcd(n, i_1+j_1) = 1$, $n-(i_1+j_1) \neq i_1+j_1$.
Therefore, by Lemma~\ref{lem:k2cycle}, $G$ contains a cycle of length $n$.
Thus $G$ itself is a cycle of length $n$.

To show the `only if' part, suppose that the row graph $G$ of a Toeplitz matrix $T\langle \alpha, \beta \rangle$ is a cycle of length $n$.
By Proposition~\ref{prop:cyclepath}, $n \ge 5$.
By Corollary~\ref{cor:trinaglefree}, the maximum row sum of $T$ is at most $2$.
Suppose that $|\alpha|=1$.
Then the vertex $n$ is adjacent to only $n-i_1-j_1$ by the hypothesis that $i_1+j_2=n$ and Lemma~\ref{lem:smallend}(i) and (iii).
Thus $n$ has degree on in $G$.
By a similar argument, one can show that $1$ is adjacent to only $1+i_1+j_1$ if $|\beta|=1$.
In both cases, we have reached a contradiction.
Thus $|\alpha|=|\beta|=2$.
By  Lemma~\ref{lem:smallend},
$G$ has at most \[(n-j_2)+(n-i_2)+(n-j_1-i_1)=3n-(i_1+j_2)-(i_2+j_1)\]
edges.
Since $G$ is a cycle of length $n$,
\[3n-(i_1+j_2)-(i_2+j_1) \ge n.\]
On the other hand, since the row sum of $T$ is at most $2$, $i_1+j_2 \ge n$ and $i_2+j_1 \ge n$ by Theorem~\ref{thm:ind2}.
Therefore $i_1+j_2 = i_2+j_1 = n$.
Thus, if $w$ and $x$ are adjacent in $G$, then $|w-x| \in \{i_2-i_1,j_2-j_1,i_1+j_1\}$ by Lemma~\ref{lem:smallend}.
Now let $d = \gcd (n, i_1+j_1)$.
Since $i_2-i_1 =j_2-j_1 = n-(i_1+j_1)$, $d$ is a common divisor of $i_2-i_1$, $j_2-j_1$, and $i_1+j_1$.
Therefore, if $w$ and $x$ are adjacent in $G$, then $w \equiv x \pmod d$.
If $d >1$, then $1 \not \equiv 2 \pmod d$ and $G$ has at least two components, which is impossible.
Thus $d = \gcd (n, i_1+j_1) = 1$.
\end{proof}

By Theorem~\ref{thm:forbidden}, any triangle-free Toeplitz-row graph cannot have cycles of three different lengths.
Yet, we have not found a triangle-free Toeplitz-row graph with cycles of two distinct lengths.
The following theorem finds ones whose row graphs consist of cycles of the same length among such Toeplitz matrices, which extends Theorem~\ref{thm:cycle}.
\begin{theorem}
Let $T= T_n \langle i_1, i_2 ; j_1, j_2 \rangle$ be a Toeplitz matrix with $i_1+j_2=i_2+j_1=n$ and $2(i_1+j_1) \neq n$. Then the row graph of $T$ is a disjoint union of exactly $d$ cycles of length $n/d$ where $d = \gcd(n, i_1+j_1)$.\label{thm:dcycle}
\end{theorem}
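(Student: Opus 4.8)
The plan is to combine the structural results already established in Propositions~\ref{prop:cdistoep} and the divisibility machinery of Lemmas~\ref{lem:iso} and \ref{lem:iscycle}. First I would invoke Proposition~\ref{prop:cdistoep}: under the hypotheses $i_1+j_2=i_2+j_1=n$ and $2(i_1+j_1)\neq n$, the row graph $G={\rm RG}(T)$ equals the graph of the symmetric Toeplitz matrix $ST_n\langle i_1+j_1,\,n-(i_1+j_1)\rangle$ and consists of only cycle components. This reduces the entire problem to analyzing a two-parameter symmetric Toeplitz graph, so that the asymmetry of $T$ no longer plays any role.

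Next I would set $u_1=i_1+j_1$ and $u_2=n-(i_1+j_1)$, so that $u_1+u_2=n$ and $d=\gcd(n,i_1+j_1)=\gcd(u_1,u_2)$ (using that $\gcd(u_1,u_1+u_2)=\gcd(u_1,u_2)$). Since $2(i_1+j_1)\neq n$ we have $u_1\neq u_2$, so $G$ is a genuine two-generator symmetric Toeplitz graph and $d\ge 1$. I would then split on whether $d=1$ or $d\ge 2$. If $d=1$, Theorem~\ref{thm:cycle} already gives that $G$ is a single cycle of length $n=n/d$, which is exactly the claim with $d=1$.

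For $d\ge 2$, the strategy is to apply Lemma~\ref{lem:iso} to $ST_n\langle u_1,u_2\rangle$, decomposing $G$ into the vertex-disjoint subgraphs $G_1,\dots,G_d$ indexed by residue classes modulo $d$, with no edges between distinct classes. I would verify the hypothesis $n\le u_1+u_2\le d\lceil n/d\rceil$ of Lemma~\ref{lem:iscycle}: here $u_1+u_2=n$ exactly, and since $d\mid n$ we have $\lceil n/d\rceil=n/d$ and $d\lceil n/d\rceil=n$, so both inequalities hold with equality. Because $d\mid n$, the residue $r$ with $r\equiv n\pmod d$, $1\le r\le d$ is $r=d$, so \emph{every} class $G_i$ ($i=1,\dots,d$) has $\lceil n/d\rceil=n/d$ vertices and is isomorphic to the graph of $ST_{n/d}\langle u_1/d,u_2/d\rangle$. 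Lemma~\ref{lem:iscycle} then forces each $G_i$ to be a cycle, and by Lemma~\ref{lem:iso} these cycles have $n/d$ vertices each. Since the $G_i$ partition $V(G)$ with no edges between them, $G$ is the disjoint union of exactly $d$ cycles of length $n/d$.

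The main obstacle I anticipate is bookkeeping the edge case $d=1$ cleanly: Lemma~\ref{lem:iscycle} is stated for $d\ge 2$, so I must route the $d=1$ case through Theorem~\ref{thm:cycle} rather than through the residue-class decomposition. A secondary point requiring care is confirming that $\gcd(u_1,u_2)/d$ reduces to $1$ so that $ST_{n/d}\langle u_1/d,u_2/d\rangle$ really does satisfy the primitive-cycle hypotheses of Lemma~\ref{lem:iscycle}; this follows because dividing both generators by their gcd yields coprime values, but it is worth stating explicitly to guarantee each $G_i$ is a single cycle rather than a union of smaller cycles. Apart from these, the argument is a direct assembly of the quoted lemmas.
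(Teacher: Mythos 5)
Your proposal is correct and follows essentially the same route as the paper: reduce via Proposition~\ref{prop:cdistoep} to the graph of $ST_n\langle i_1+j_1,\,n-(i_1+j_1)\rangle$, split it into $d$ residue classes by Lemma~\ref{lem:iso}, and show each class is an $(n/d)$-cycle. The only (cosmetic) difference is the last step: the paper cites Theorem~\ref{thm:cycle} applied to the reduced matrix $ST_{n/d}\langle (i_1+j_1)/d,\,(n-i_1-j_1)/d\rangle$, whereas you invoke Lemma~\ref{lem:iscycle} on the full graph and treat $d=1$ separately, which is if anything a slightly more careful handling of the same argument.
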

\begin{proof}
Let $G$ be the row graph of $T$.
 By Proposition~\ref{prop:cdistoep},  $G$ is the graph of $ST_n \langle i_1+j_1,n-(i_1+j_1)\rangle$ and consists of only cycle components.
Let $V_i = \{i,i+d,\ldots,  i+(n/d -1) d\}$ for each $i \in [d]$.
Then, by Lemma~\ref{lem:iso}, the subgraph $G_i$ of $G$ induced by $V_i$ is isomorphic to the graph of $ST_{n/d} \langle (i_1+j_1) /d , (n-i_1-j_1)/d \rangle$ and there is no edge joining a vertex in $G_i$ and a vertex in $G_j$ if $i \neq j$.
By Theorem~\ref{thm:cycle}, $ST_{n/d} \langle (i_1+j_1) /d , (n-i_1-j_1)/d \rangle$ is a cycle of length $n/d$ and this completes the proof.
\end{proof}

\end{document}